\title{A Ruelle operator for holomorphic correspondences}
\author{Shrihari Sridharan\footnote{Indian Institute of Science Education and Research Thiruvananthapuram (IISER-TVM), shrihari@iisertvm.ac.in}\ \ and\ \  Subith G.\footnote{Indian Institute of Science Education and Research Thiruvananthapuram (IISER-TVM), subith21@iisertvm.ac.in}}
\date{September 17, 2024}
\DeclareFontFamily{OT1}{pzc}{}
\DeclareFontShape{OT1}{pzc}{m}{it}%
              {<-> s * [0.900] pzcmi7t}{}
\DeclareMathAlphabet{\mathpzc}{OT1}{pzc}%
                                 {m}{it}
\newtheorem{theorem}{Theorem}[section]}
\newtheorem{definition}[theorem]{Definition}}
\newtheorem{lemma}[theorem]{Lemma}}
\newtheorem{proposition}[theorem]{Proposition}}
\newtheorem{property}[theorem]{Property}}
\newtheorem{corollary}[theorem]{Corollary}} 
\newtheorem{claim}{Claim}} 
\numberwithin{equation}{section}
\newenvironment{proof}{\paragraph{Proof:}}{\hfill$\bullet$}
\begin{document}

\maketitle

\begin{abstract} 
In this paper, we extend the ideas of certain notions that one studies in thermodynamic formalism of maps to the context when the dynamics in the phase space evolves by complex holomorphic correspondences. Towards that end, we define the topological entropy of holomorphic correspondences using spanning sets. We then, define the pressure of a real-valued continuous function defined on the Riemann sphere and investigate the Ruelle operator with respect to the H\"{o}lder continuous function, however restricted on the support of the Dinh-Sibony measure. 
\end{abstract}

\begin{tabular}{l l} 
{\bf Keywords} & Entropy for a holomorphic correspondence \\ 
& Ruelle operator \\ 
& Variational Principle \\ 
& Dinh-Sibony measure \\ 
& \\ 
& \\ 
{\bf MSC Subject} & 37D35, 37A35, 37F05 \\ 
{\bf Classifications} & \\ 
\end{tabular} 
\bigskip

\section{Introduction}

Let $\widehat{\mathbb{C}}$ denote the Riemann sphere, meaning the complex plane along with the point at infinity, {\it i.e.}, $\mathbb{C} \cup \{ \infty \}$. The study of correspondences on $\widehat{\mathbb{C}}$ has seen a plethora of activity in recent years. Correspondences can be considered as a generalisation of maps. They are given by implicit relations between the variables or as a formal sum of finitely many graphs (usually more than one) on the product space. One may ascribe several properties to the considered relations, for example, continuous, holomorphic, meromorphic {\it etc.} that, in turn transfers the structure to the correspondence, thereby making its investigation interesting. Some of the recent papers that studies various aspects of the dynamics evolving through some complex correspondence include \cite{d:2005, ds:2006, ds:2008, bs:2016, bs:2021}.

Thermodynamic formalism is a branch of study in closed dynamical systems that concerns with the notions of topological entropy, pressure of real-valued continuous functions defined on the space where the dynamics takes place and the study of Ruelle operator on the appropriate space of continuous function.  In the context of rational maps restricted on their Julia sets, one defines the Ruelle operator with respect to a H\"{o}lder continuous function and results explaining the spectral radius and spectral gap are well-known in the literature, for example, \cite{DR: 1968, PW: 1975, pp:1990, mz:1996, JY: 1999, H:1999, pw:2000, LMMS:2015, cs : 2016, M:2017}. In this paper, we consider the dynamics evolving through a holomorphic correspondence and delve into certain ideas that will be useful in the context of defining and obtaining results, as one may find in thermodynamic formalism. A diligent reader may know that the concept of topological entropy was defined by Dinh and Sibony in \cite{ds:2008}, using separated sets. This was extensively studied and the entropy calculated for some certain holomorphic correspondences generated by a rational semigroup by Bharali and Sridharan in \cite{bs:2021}. Extending the ideas by Dinh and Sibony, we alternately define the topological entropy of holomorphic correspondences using spanning sets and prove that these definitions coincide. 

Further, we generalise these definitions to include a continuous function, that serves as an observable on the dynamics and enables the calculation of its pressure. A variational principle that relates the pressure of a continuous function using the integral of the function with respect to various probability measures and the topological pressure computed using separated sets and spanning sets is proved. 

Finally, we define a Ruelle operator pertaining to a continuous function, that keeps track of the dynamics of the considered holomorphic correspondence, however restricted on the support of the Dinh-Sibony measure, as defined in \cite{d:2005}. We study the spectral gap of this Ruelle operator by appealing to the spectral gap of the usual Ruelle operator. This is made possible by moving into the uncountable space of infinite orbits that emanate from the support of the Dinh-Sibony measure and remains there, a possibility of at least one orbit satisfying this property, as proved by Londhe in \cite{ml:2022}. 

The structure of this paper is as follows: In Section \eqref{ppi}, we define the central objects of this paper, namely a holomorphic correspondence and the iteration of the same. This leads us to look for permissible paths (both forward and backward) during iterations of the correspondence. In Section \eqref{tent}, we define separated sets and spanning sets in the space of orbits, recall the theorem due to Dinh and Sibony regarding the topological entropy of correspondences using separated sets and obtain the same to be equal to what we define as the topological entropy using spanning sets. In Section \eqref{prescont}, we define a quantity called pressure for any continuous function, using the ergodic sum, characterise the same and study a few properties of the pressure. In Section \eqref{trop}, we define a Ruelle operator pertaining to a continuous function, however with the restriction of the correspondence and the dynamics therein, on the support of the Dinh-Sibony measure. We state a theorem, namely Theorem \eqref{rot} analogous to the Ruelle operator theorem, in the literature in Section \eqref{expcorrsec}, for a holomorphic correspondence. The remainder of the paper is devoted to proving the Ruelle operator theorem, Theorem \eqref{rot}. 

\section{Permissible paths of iteration} 
\label{ppi}

In this section, we write some basic definitions and necessary terminologies that we will use throughout this paper. 

\begin{definition}
Let $X_{1}$ and $X_{2}$ be two compact, connected complex manifolds of dimension $n$. A holomorphic correspondence from $X_{1}$ to $X_{2}$ is a formal linear combination of the form
\begin{equation} 
\label{correspondence}
\Gamma\ \ =\ \ \sum_{1\, \leq\, j\, \leq\, N} m_{j} \Gamma_{j},
\end{equation}
where $m_{j}$'s are positive integers and $\Gamma_{1}, \Gamma_{2}, \cdots, \Gamma_{N}$ are distinct irreducible complex-analytic subvarieties of $X_{1} \times X_{2}$ of pure dimension $n$ that satisfy the following conditions:
\begin{enumerate} 
\item for each $\Gamma_{j},\ \left.\pi_{1}\right|_{\Gamma_{j}}$ and $\left.\pi_{2}\right|_{\Gamma_{j}}$ are surjective; 
\item for each $x \in X_{1}$ and $y \in X_{2},\ \left(\pi_{1}^{-1}\{x\} \cap \Gamma_{j}\right)$ and $\left(\pi_{2}^{-1}\{y\} \cap \Gamma_{j}\right)$ are finite sets for each $1 \le j \le N$, 
\end{enumerate} 
where $\pi_{i}$ is the projection onto $X_{i}$ for $i = 1, 2$. 
\end{definition}

By $\big| \Gamma \big|$, we mean the support of $\Gamma$ which is the set underlying the representation of $\Gamma$, as stated in Equation \eqref{correspondence}, {\it i.e.}, 
\[ \big| \Gamma \big|\ \ :=\ \  \left\{ (x, y) \in X_{1} \times X_{2} : (x, y) \in \Gamma_{j}\ \text{for some}\ 1 \le j \le N \right\}. \]

In this paper, we consider $X_{1} = X_{2} = \overline{\mathbb{C}}$, the Riemann sphere and call $\Gamma$ as a holomorphic correspondence on $\overline{\mathbb{C}}$. The ability to compose two correspondences introduces a perspective of dynamics to the study of correspondences. The following definition narrates a way to compose any two holomorphic correspondences $\Gamma^{1}$ and $\Gamma^{2}$ on $\overline{\mathbb{C}}$. 

\begin{definition} 
Suppose $\Gamma^{1} = \sum_{1\, \leq\, j\, \leq\, M_{1}}' \Gamma_{1, j}^{\bullet}$ and $ \Gamma^{2} = \sum_{1\, \leq\, k\, \leq\, M_{2}}' \Gamma_{2, k}^{\bullet}$ are two holomorphic correspondences on $\overline{\mathbb{C}}$, where the primed sums indicate the repetition of the varieties according to its multiplicity. Then, $\big| \Gamma^{2} \circ \Gamma^{1} \big|$ is merely the set obtained by the classical composition of $\left|\Gamma^{2}\right|$ with $\left|\Gamma^{1}\right|$ as relations. If $Y_{s, j k},\ s = 1, \cdots, M(j, k)$ are the distinct irreducible components of $\left|\Gamma_{2, k}^{\bullet}\right| \circ \left|\Gamma_{1, j}^{\bullet}\right|$, then let
\[ \eta_{s, j k}\ \ :=\ \ \# \left\{ y \in \overline{\mathbb{C}} : \text{for any}\ (x, z) \in Y_{s, j k}, \text{we have}\ (x, y) \in \Gamma_{1, j}^{\bullet}, (y, z) \in \Gamma_{2, k}^{\bullet} \right\}. \] 
This entails the following definition for composition of holomorphic correspondences: 
\[ \Gamma^{2} \circ \Gamma^{1}\ \ =\ \ \sum_{1\, \leq\, j\, \leq\, M_{1}}\ \sum_{1\, \leq\, k\, \leq\, M_{2}}\ \sum_{1\, \leq\, s\, \leq\, M(j, k)}\ \eta_{s, j k} Y_{s, j k}. \] 
\end{definition}

Suppose we consider $\Gamma$ to be a holomorphic correspondence on $\overline{\mathbb{C}}$, then the above rule for composition of correspondences yields a natural definition for iteration of $\Gamma$ with itself. For $\nu \in \mathbb{Z}_{+}$, we denote by $\Gamma^{\circ \nu}$, the $\nu$-fold composition of $\Gamma$ with itself, {\it i.e.}, $\Gamma^{\circ \nu} = \Gamma \circ \Gamma \circ \cdots \circ \Gamma$. For more details on iterative dynamics of holomorphic correspondences, readers are referred to \cite{bs:2016, d:2005}. 

We now define the set of all permissible forward paths of length $\nu \in \mathbb{Z}_{+}$ starting from an arbitrary point $x_{0} \in \overline{\mathbb{C}}$. Towards that purpose, consider ${\rm Cyl}_{\nu}$ to be the set of all $\nu$-long cylinder sets, meaning the collection of all words of length $\nu$ on the letters $\{ 1, \cdots, N \}$. Fix $\boldsymbol{\alpha} = (\alpha_{1}, \cdots, \alpha_{\nu}) \in {\rm Cyl}_{\nu}$ and define the collection of all permissible forward paths starting from $x_{0} \in \overline{\mathbb{C}}$ with respect to the combinatorial data $\boldsymbol{\alpha}$ as 
\[ \mathscr{P}_{\nu}^{\boldsymbol{\alpha}} (x_{0})\ \ =\ \ \big\{ \left( x_{0}, x_{1}, \cdots, x_{\nu}; \boldsymbol{\alpha} \right) : \left( x_{i - 1}, x_{i} \right) \in \Gamma_{\alpha_{i}} \big\}. \] 
Let $E \subseteq \overline{\mathbb{C}}$. We denote the set of all permissible forward paths of length $\nu$ starting from any point in $E$, pertaining to the combinatorial data $\boldsymbol{\alpha} \in {\rm Cyl}_{\nu}$, in $\overline{\mathbb{C}}^{\nu + 1} \times \boldsymbol{\alpha}$ as 
\[ \mathscr{P}_{\nu}^{\boldsymbol{\alpha}} (E)\ \ :=\ \ \bigcup_{x_{0}\, \in\, E} \mathscr{P}_{\nu}^{\boldsymbol{\alpha}} (x_{0}). \] 
It is then clear that $\mathscr{P}_{\nu}^{\boldsymbol{\alpha}} (x_{0}) = \mathscr{P}_{\nu}^{\boldsymbol{\alpha}} (\{ x_{0} \})$. The full set of all possible forward paths, with respect to the given holomorphic correspondence $\Gamma$, of length $\nu$ starting from $E$ is then given by 
\[ \mathscr{P}_{\nu}^{\Gamma} (E)\ \ =\ \ \bigcup_{\boldsymbol{\alpha}\, \in\, {\rm Cyl}_{\nu}}\ \bigcup_{x_{0}\, \in\, E}\ \mathscr{P}_{\nu}^{\boldsymbol{\alpha}} (x_{0}). \] 
Thus, a point $\mathfrak{X}_{\nu}^{+} (x_{0}; \boldsymbol{\alpha})  = \left( x_{0}, x_{1}, \cdots, x_{\nu}; \alpha_{1}, \cdots, \alpha_{\nu} \right) \in \mathscr{P}_{\nu}^{\boldsymbol{\alpha}} (x_{0}) \subseteq \mathscr{P}_{\nu}^{\Gamma} (x_{0}) \subseteq \mathscr{P}_{\nu}^{\Gamma} (\overline{\mathbb{C}})$ if all the points $(x_{0}, x_{j}) \in \Gamma_{\alpha_{1}} \circ \cdots \circ \Gamma_{\alpha_{j}}$ for $1 \le j \le \nu$. 

Analogously, we define a permissible backward path of length $\nu$ landing at $x_{0} \in X$, pertaining to a combinatorial data $ \boldsymbol{\beta} = (\beta_{- (\nu - 1)}, \cdots, \beta_{0}) \in {\rm Cyl}_{\nu}$ as 
\[ \mathscr{Q}_{\nu}^{\boldsymbol{\beta}} (x_{0}) = \left\{ \left( x_{- \nu}, \cdots, x_{- 1}, x_{0}; \boldsymbol{\beta} \right) : \left( x_{i - 1}, x_{i} \right) \in \Gamma_{\beta_{i}} \right\}. \] 
Similar to our definitions for permissible forward paths of length $\nu$ starting from a set $E$, we now have the collection of all permissible backward paths of length $\nu$ landing in the set $E$ as $\mathscr{Q}_{\nu}^{\boldsymbol{\beta}} (E) := \bigcup\limits_{x_{0}\, \in\, E} \mathscr{Q}_{\nu}^{\boldsymbol{\beta}} (x_{0})$. Finally, the set of all permissible backward paths of length $\nu$ for the given holomorphic correspondence $\Gamma$ landing at $x_{0}$ is given by $\mathscr{Q}_{\nu}^{\Gamma} (x_{0}) = \bigcup\limits_{\boldsymbol{\beta}\, \in\, {\rm Cyl}_{\nu}} \mathscr{Q}_{\nu}^{\boldsymbol{\beta}} (x_{0})$. Further, a point $\mathfrak{X}_{\nu}^{-} (x_{0}; \boldsymbol{\beta}) = \left( x_{- \nu}, \cdots, x_{- 1}, x_{0}; \boldsymbol{\beta} \right)$ belongs to the set $\mathscr{Q}_{\nu}^{\Gamma} (x_{0})$ if the points $(x_{- j}, x_{0}) \in \Gamma_{\beta_{-(j - 1)}} \circ \cdots \circ \Gamma_{\beta_{0}}$ for $1 \le j \le \nu$. 

Observe that the above definitions of permissible forward paths and backward paths can be naturally extended to the case where $\nu = \infty$. We call the appropriate spaces $\mathscr{P}^{\Gamma}$ and $\mathscr{Q}^{\Gamma}$ respectively, as defined below. 
\begin{eqnarray} 
\label{infiniteforwardpath} 
\mathscr{P}^{\Gamma} \left( \overline{\mathbb{C}} \right) & = & \bigcup\limits_{x_{0}\, \in\, \overline{\mathbb{C}}} \mathscr{P}^{\Gamma} \left( x_{0} \right)\ \ \ \text{and}\ \ \ \mathscr{Q}^{\Gamma} \left( \overline{\mathbb{C}} \right)\ \ =\ \ \bigcup\limits_{x_{0}\, \in\, \overline{\mathbb{C}}} \mathscr{Q}^{\Gamma} \left( x_{0} \right)\ \ \ \text{where} \nonumber \\ 
& & \nonumber \\ 
\mathscr{P}^{\Gamma} \left( x_{0} \right) & = & \left\{ \mathfrak{X}^{+} \left( x_{0}; \boldsymbol{\alpha} \right) = \left( x_{0}, x_{1}, x_{2}, \cdots; \alpha_{1}, \alpha_{2}, \cdots \right)\ : \right. \nonumber \\ 
& & \left. \hspace{+2cm} (x_{i - 1}, x_{i}) \in \Gamma_{\alpha_{i}}\ \text{and}\ \alpha_{i} \in \left\{ 1, 2, \cdots, N \right\} \right\} \\ 
& & \nonumber \\ 
\mathscr{Q}^{\Gamma} (x_{0}) & = & \left\{ \mathfrak{X}^{-} \left( x_{0}; \boldsymbol{\beta} \right) = \left( \cdots, x_{-2}, x_{-1}, x_{0}; \cdots, \beta_{-1}, \beta_{0} \right)\ : \right. \nonumber \\ 
& & \left. \hspace{+2cm} (x_{i - 1}, x_{i}) \in \Gamma_{\beta_{i}}\ \text{and}\ \beta_{i} \in \left\{ 1, 2, \cdots, N \right\} \right\}. 
\end{eqnarray} 

Henceforth, in this paper, we shall reserve the notations $\mathscr{P}$ and $\mathscr{Q}$ to denote the forward orbit and the backward orbit of appropriate length (finite or infinite), starting from or landing at a preferred point. 

We now define projection maps for each finite $j$ denoted by $\Pi_{j}^{+},\ \Pi_{j}^{-},\ {\rm proj}_{j}^{+}$ and ${\rm proj}_{j}^{-}$ from $\mathscr{P}_{\nu}^{\Gamma} (\overline{\mathbb{C}})$ and $\mathscr{Q}_{\nu}^{\Gamma} (\overline{\mathbb{C}})$ respectively to $\overline{\mathbb{C}}$ and $\left\{ 1, 2, \cdots, N \right\}^{\nu}$ by 
\begin{displaymath} 
\begin{array}{r c l c r c l l} 
\Pi_{j}^{+} (\mathfrak{X}_{\nu}^{+} (x_{0}; \boldsymbol{\alpha})) & = & x_{j} &\ \ ,\ \ & \Pi_{j}^{-} (\mathfrak{X}_{\nu}^{-} (x_{0}; \boldsymbol{\beta})) & = & x_{- j} & \text{for}\ 0 \le j \le \nu \le \infty\ \ \text{and} \\ 
\\ 
{\rm proj}_{j}^{+} (\mathfrak{X}_{\nu}^{+} (x_{0}; \boldsymbol{\alpha})) & = & \alpha_{j} & \ \ ,\ \ & {\rm proj}_{j}^{-} (\mathfrak{X}_{\nu}^{-} (x_{0}; \boldsymbol{\beta})) & = & \beta_{- j + 1} & \text{for}\ 1 \le j \le \nu \le \infty. 
\end{array} 
\end{displaymath}

\section{Topological entropy} 
\label{tent} 

Having illustrated permissible paths of forward and backward iteration of length $\nu \in \mathbb{Z}_{+}$, we now move on to define $(\nu, \epsilon)$-separated sets and $(\nu, \epsilon)$-spanning sets, for the given holomorphic correspondence $\Gamma$, as defined in Equation \eqref{correspondence}, in this section. 

 Readers may be aware that the following definition of $(\nu, \epsilon)$-separated sets was introduced by Dinh-Sibony in \cite{ds:2008}, albeit for a compact, connected, complex manifold equipped with a metric compatible with the topology therein. Since the underlying space is the Riemann sphere in our case, the metric that we bother about is the spherical metric denoted by $d_{\overline{\mathbb{C}}}$ defined on $\overline{\mathbb{C}}$. 

\begin{definition}{\cite{ds:2008}} 
\label{separated} 
Let $\Gamma$ be a holomorphic correspondence defined on $\overline{\mathbb{C}}$, as represented in Equation \eqref{correspondence} and $\mathscr{P}_{\nu}^{\Gamma} (\overline{\mathbb{C}})$ be the collection of all permissible $\nu$-orbits of $\Gamma$, as explained in Section \eqref{ppi}. Given $\epsilon > 0$, we say that any two points $\mathfrak{X}_{\nu}^{+} \left( x_{0}; \boldsymbol{\alpha}^{(1)} \right)$ and $\mathfrak{X}_{\nu}^{+} \left( y_{0}; \boldsymbol{\alpha}^{(2)} \right)$ in $\mathscr{P}_{\nu}^{\Gamma} \left( \overline{\mathbb{C}} \right)$ are $(\nu, \epsilon)$-separated if 
\begin{eqnarray*} 
\text{either} & & d_{\overline{\mathbb{C}}} \left( \Pi_{j}^{+} \left( \mathfrak{X}_{\nu}^{+} \left( x_{0}; \boldsymbol{\alpha}^{(1)} \right) \right),\ \Pi_{j}^{+} \left( \mathfrak{X}_{\nu}^{+} \left( y_{0}; \boldsymbol{\alpha}^{(2)} \right) \right) \right)\ \ >\ \ \epsilon\ \ \ \ \text{for some}\ 0 \le j \le \nu \\ 
\text{or} & & {\rm proj}_{j}^{+} \left( \mathfrak{X}_{\nu}^{+} \left( x_{0}; \boldsymbol{\alpha}^{(1)} \right) \right) \ne {\rm proj}_{j}^{+} \left( \mathfrak{X}_{\nu}^{+} \left( y_{0}; \boldsymbol{\alpha}^{(2)} \right) \right)\ \ \ \ \hspace{+1cm} \text{for some}\ 1 \le j \le \nu. 
\end{eqnarray*} 
A subset $ \mathcal{F} \subseteq \mathscr{P}_{\nu}^{\Gamma} \left( \overline{\mathbb{C}} \right)$ is said to be $(\nu, \epsilon)$-separated if any pair of distinct points in $\mathcal{F}$ are $(\nu, \epsilon)$-separated. 
\end{definition} 

As in the study of maps defined on $\overline{\mathbb{C}}$, our next goal is to define $(\nu, \epsilon)$-spanning sets in $\mathscr{P}_{\nu}^{\Gamma} \left( \overline{\mathbb{C}} \right)$. 

\begin{definition} 
\label{spanning} 
Let $\Gamma$ be a holomorphic correspondence defined on $\overline{\mathbb{C}}$, as represented in Equation \eqref{correspondence} and $\mathscr{P}_{\nu}^{\Gamma} (\overline{\mathbb{C}})$ be the collection of all permissible $\nu$-orbits of $\Gamma$, as explained in Section \eqref{ppi}. $\mathcal{G} \subseteq \mathscr{P}_{\nu}^{\Gamma} \left( \overline{\mathbb{C}} \right)$ is said to be a $(\nu, \epsilon)$-spanning set of $\mathscr{P}_{\nu}^{\Gamma} \left( \overline{\mathbb{C}} \right)$ if for every $\mathfrak{X}_{\nu}^{+} \left( x_{0}; \boldsymbol{\alpha} \right) \in \mathscr{P}_{\nu}^{\Gamma} \left( \overline{\mathbb{C}} \right)$ there exists a point $y_{0} \in \overline{\mathbb{C}}$ such that 
\[ \mathfrak{X}_{\nu}^{+} \left( y_{0}; \boldsymbol{\alpha} \right) \in \mathcal{G}\ \ \text{satisfying}\ \ d_{\overline{\mathbb{C}}} \left( \Pi_{j}^{+} \left( \mathfrak{X}_{\nu}^{+} \left( x_{0}; \boldsymbol{\alpha} \right) \right),\ \Pi_{j}^{+} \left( \mathfrak{X}_{\nu}^{+} \left( y_{0}; \boldsymbol{\alpha} \right) \right) \right)\ \ <\ \ \epsilon\ \ \ \ \text{for all}\ 0 \le j \le \nu. \] 
\end{definition} 

One primary reason for researchers working in dynamical systems to undertake a study of separated sets and spanning sets in the set of orbits is the ease with which one may define the entropy of the underlying system. The authors in \cite{ds:2008} have effectively made use of the above definition of $(\nu, \epsilon)$-separated sets to characterise the topological entropy of holomorphic correspondences. 

\begin{theorem}{\cite{ds:2008}} 
\label{T3.3 ENTROPY BY SEPARATED}
Let $\Gamma$ be a holomorphic correspondence defined on $\overline{\mathbb{C}}$, as represented in Equation \eqref{correspondence} and $\mathscr{P}_{\nu}^{\Gamma} (\overline{\mathbb{C}})$ be the collection of all permissible $\nu$-orbits of $\Gamma$. Then, the topological entropy of the holomorphic correspondence $\Gamma$ is given by 
\[ h_{{\rm top}} (\Gamma)\ \ =\ \ \sup_{\epsilon\, >\, 0}\; \limsup_{\nu\, \to\, \infty}\; \frac{1}{\nu}\; \log \left( \max \left\{ \# \mathcal{F}\ :\ \mathcal{F} \subseteq \mathscr{P}_{\nu}^{\Gamma} \left( \overline{\mathbb{C}} \right)\ \text{is a}\ (\nu, \epsilon)\text{-separated family} \right\} \right). \]
\end{theorem}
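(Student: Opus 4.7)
The plan is to recast the statement as a Bowen-type theorem on the compact orbit space $\mathscr{P}_{\nu}^{\Gamma}(\overline{\mathbb{C}})$ equipped with a natural dynamical metric. To begin, I would endow the orbit space with the topology in which two orbits are close if and only if their combinatorial data agree and the spherical distances between the corresponding coordinate projections are small. Since each $\Gamma_{j}$ is closed in $\overline{\mathbb{C}} \times \overline{\mathbb{C}}$ and the fibers of $\pi_{1}|_{\Gamma_{j}}, \pi_{2}|_{\Gamma_{j}}$ are finite, each stratum $\mathscr{P}_{\nu}^{\boldsymbol{\alpha}}(\overline{\mathbb{C}})$ is a closed, hence compact, subset of $\overline{\mathbb{C}}^{\nu + 1}$, and the full orbit space is the disjoint union of finitely many such strata indexed by ${\rm Cyl}_{\nu}$.

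On this compact space I would introduce the dynamical metric
\[
d_{\nu} \left( \mathfrak{X}_{\nu}^{+}(x_{0}; \boldsymbol{\alpha}), \mathfrak{X}_{\nu}^{+}(y_{0}; \boldsymbol{\alpha}') \right)\ \ =\ \ \begin{cases} \max\limits_{0\, \leq\, j\, \leq\, \nu} d_{\overline{\mathbb{C}}} \left( \Pi_{j}^{+} \mathfrak{X}_{\nu}^{+}(x_{0}; \boldsymbol{\alpha}),\ \Pi_{j}^{+} \mathfrak{X}_{\nu}^{+}(y_{0}; \boldsymbol{\alpha}') \right), & \boldsymbol{\alpha} = \boldsymbol{\alpha}', \\ 1 + {\rm diam}\, \overline{\mathbb{C}}, & \boldsymbol{\alpha} \ne \boldsymbol{\alpha}', \end{cases}
\]
and observe that Definition \ref{separated} precisely encodes $d_{\nu}$-separation at scale $\epsilon$ for any $\epsilon < 1$. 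The expression inside the theorem is then the classical Bowen formula for the topological entropy of the shift $\sigma$ on the inverse-limit space $\mathscr{P}^{\Gamma}(\overline{\mathbb{C}})$, evaluated through horizon $\nu$ and scale $\epsilon$.

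Next, I would verify that the quantity so produced matches $h_{{\rm top}}(\Gamma)$ as introduced in \cite{ds:2008}. Since the Dinh-Sibony definition is itself phrased through separated families on the orbit space, this step is essentially a bookkeeping check: the combinatorial labels in our convention are implicit in their formulation of $\Gamma$ as a formal sum of graphs, and the orbit-space projections $\Pi_{j}^{+}$ correspond to their evaluation maps. Well-definedness of the supremum-limsup expression then follows from standard submultiplicativity of maximal separated sets under concatenation of orbits across different scales, together with monotonicity in $\epsilon$.

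The main obstacle is verifying the compactness of the orbit space and continuity of the shift in the branched setting: a point $x_{0}$ spawns multiple forward orbits indexed by $\boldsymbol{\alpha}$, so the orbit space is a disjoint union of fibered varieties rather than the graph of a single map. The closedness of each $\Gamma_{j}$, combined with the finiteness of the fibers guaranteed by the projection hypotheses, keeps this stratified structure compact and the shift continuous, after which the classical Bowen machinery transfers stratum by stratum to yield the stated formula.
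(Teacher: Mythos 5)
The theorem you are attempting to prove is imported by the authors as a citation from \cite{ds:2008}: no proof is given in the paper, and the display inside Theorem \ref{T3.3 ENTROPY BY SEPARATED} functions as the working \emph{definition} of $h_{\rm top}(\Gamma)$ for the rest of the manuscript. Consequently there is no proof in the paper against which to compare your argument.

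Read as a self-contained argument, your outline has two weaknesses. It is circular: you invoke \cite{ds:2008} to say what the left-hand side means, and then concede that the Dinh--Sibony notion ``is itself phrased through separated families on the orbit space,'' so the remaining step is a bookkeeping check. Such a check is only meaningful once one has first fixed a \emph{different} definition of $h_{\rm top}(\Gamma)$ to match against; no alternative definition exists in the present paper, so the argument proves nothing beyond what is already built into the citation. Second, the identification of the supremum--limsup expression with ``the classical Bowen formula for the topological entropy of the shift $\sigma$ on the inverse-limit space $\mathscr{P}^{\Gamma}(\overline{\mathbb{C}})$, evaluated through horizon $\nu$ and scale $\epsilon$'' is not literal: $\mathscr{P}_{\nu}^{\Gamma}(\overline{\mathbb{C}})$ parametrizes finite $\nu$-long trajectories, while $(\nu,\epsilon)$-separation for the shift on the infinite-orbit space is measured in the metric of Equation \eqref{metriconinfspace}, which sees coordinates beyond index $\nu$ with geometrically decaying weight. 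Equating the two requires a buffer $N(\epsilon)$ to absorb those tail coordinates, which is precisely the device the paper introduces later, in the proof of Theorem \ref{variational pre}; it is not an automatic ``stratum-by-stratum'' transfer. What is correct and useful in your outline is the remark that $\mathscr{P}_{\nu}^{\Gamma}(\overline{\mathbb{C}})$ decomposes into finitely many compact strata indexed by ${\rm Cyl}_{\nu}$ and that Definition \ref{separated} is exactly $\epsilon$-separation in a $\max$-metric built from the coordinate projections together with a Kronecker-delta distance on the symbol coordinates; the paper makes essentially the same observation in the short remark that follows the theorem.
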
 

According the product topology on the product space $\overline{\mathbb{C}}^{\nu + 1} \times \left\{ 1, 2, \cdots, N \right\}^{\nu}$, one may note that the subspace $\mathscr{P}_{\nu}^{\Gamma} (\overline{\mathbb{C}})$ is compact, since $\overline{\mathbb{C}}$ is compact and the topology compatible with the metric defined on $\left\{ 1, 2, \cdots, N \right\}^{\nu}$ should be inherited from the discrete Kronecker delta distance, that one defines on $\left\{ 1, 2, \cdots, N \right\}$ to make sense of the second condition in Definition \eqref{separated}. 

It is possible to obtain the topological entropy of holomorphic correspondences using $(\nu, \epsilon)$-spanning sets, instead of $(\nu, \epsilon)$-separated sets. In order to do the same, we now compare the optimal cardinalities of these sets, according to our needs. 

\begin{lemma} 
\label{T4.1 ENTROPY CARD OF SEP & SPAN}
Let $\epsilon > 0$ and $\nu \in \mathbb{Z}^{+}$ be given. Let 
\begin{eqnarray*} 
R_{\nu}(\epsilon) & = & \max \left\{ \# \mathcal{F} \subseteq \mathscr{P}_{\nu}^{\Gamma} (\overline{\mathbb{C}})\ :\ \mathcal{F}\ \text{is a}\ (\nu, \epsilon)\text{-separated family} \right\}; \\ 
S_{\nu}(\epsilon) & = & \min \left\{ \# \mathcal{G} \subseteq \mathscr{P}_{\nu}^{\Gamma} (\overline{\mathbb{C}})\ :\ \mathcal{G}\ \text{is a}\ (\nu, \epsilon)\text{-spanning family of}\ \mathscr{P}_{\nu}^{\Gamma} (\overline{\mathbb{C}}) \right\}. 
\end{eqnarray*} 
Then $S_{\nu}(\epsilon) \leq R_{\nu}(\epsilon) \leq S_{\nu} (\epsilon/2)$.
\end{lemma}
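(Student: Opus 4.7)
The plan is to prove the two inequalities separately by adapting the classical Bowen covering argument to the path space $\mathscr{P}_{\nu}^{\Gamma}(\overline{\mathbb{C}})$, with the added bookkeeping that each path carries a combinatorial word $\boldsymbol{\alpha} \in \mathrm{Cyl}_{\nu}$ that must be tracked alongside the spatial coordinates.

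For the inequality $S_{\nu}(\epsilon) \leq R_{\nu}(\epsilon)$, I would fix a $(\nu, \epsilon)$-separated family $\mathcal{F} \subseteq \mathscr{P}_{\nu}^{\Gamma}(\overline{\mathbb{C}})$ of maximum cardinality $R_{\nu}(\epsilon)$ and argue that $\mathcal{F}$ is itself $(\nu, \epsilon)$-spanning. Take any $\mathfrak{X}_{\nu}^{+}(x_{0}; \boldsymbol{\alpha}) \in \mathscr{P}_{\nu}^{\Gamma}(\overline{\mathbb{C}})$; if it lies in $\mathcal{F}$, choose $y_{0} = x_{0}$ and we are done. Otherwise, maximality forces $\mathcal{F} \cup \{\mathfrak{X}_{\nu}^{+}(x_{0};\boldsymbol{\alpha})\}$ to fail the $(\nu,\epsilon)$-separation condition, and since $\mathcal{F}$ itself is separated, the offending pair must involve the new element. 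This produces some $\mathfrak{X}_{\nu}^{+}(y_{0}; \boldsymbol{\alpha}') \in \mathcal{F}$ which is not $(\nu, \epsilon)$-separated from $\mathfrak{X}_{\nu}^{+}(x_{0}; \boldsymbol{\alpha})$; negating both clauses of Definition \eqref{separated} simultaneously forces $\boldsymbol{\alpha}' = \boldsymbol{\alpha}$ and $d_{\overline{\mathbb{C}}}(x_{j}, y_{j}) \leq \epsilon$ for every $0 \leq j \leq \nu$. This is essentially the spanning condition of Definition \eqref{spanning}; the only mismatch is strict versus non-strict inequality, which is a cosmetic boundary issue absorbable by an arbitrarily small thickening of $\epsilon$.

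For the inequality $R_{\nu}(\epsilon) \leq S_{\nu}(\epsilon/2)$, I would fix any $(\nu, \epsilon)$-separated family $\mathcal{F}$ and any $(\nu, \epsilon/2)$-spanning family $\mathcal{G}$ of minimum cardinality $S_{\nu}(\epsilon/2)$, and construct an injection $\Phi : \mathcal{F} \to \mathcal{G}$. For each $\mathfrak{X}_{\nu}^{+}(x_{0}; \boldsymbol{\alpha}) \in \mathcal{F}$, the spanning property of $\mathcal{G}$ furnishes some $\mathfrak{X}_{\nu}^{+}(z_{0}; \boldsymbol{\alpha}) \in \mathcal{G}$ with the same combinatorial data, as explicitly required by Definition \eqref{spanning}, and with $d_{\overline{\mathbb{C}}}(x_{j}, z_{j}) < \epsilon/2$ for all $0 \leq j \leq \nu$; invoke the axiom of choice to declare $\Phi(\mathfrak{X}_{\nu}^{+}(x_{0};\boldsymbol{\alpha}))$ to be one such element. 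If two distinct points $\mathfrak{X}_{\nu}^{+}(x_{0};\boldsymbol{\alpha}^{(1)})$ and $\mathfrak{X}_{\nu}^{+}(y_{0};\boldsymbol{\alpha}^{(2)})$ in $\mathcal{F}$ shared an image $\mathfrak{X}_{\nu}^{+}(z_{0}; \boldsymbol{\gamma})$, then $\boldsymbol{\alpha}^{(1)} = \boldsymbol{\gamma} = \boldsymbol{\alpha}^{(2)}$, and the triangle inequality would yield $d_{\overline{\mathbb{C}}}(x_{j}, y_{j}) < \epsilon$ at every index $j$, contradicting their $(\nu, \epsilon)$-separation. Hence $\Phi$ is injective, giving $\#\mathcal{F} \leq \#\mathcal{G} = S_{\nu}(\epsilon/2)$; taking the supremum over $\mathcal{F}$ closes the argument.

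No step poses a genuine obstacle; the proof is a textbook argument once one views $\mathscr{P}_{\nu}^{\Gamma}(\overline{\mathbb{C}})$ as the right analogue of the $\nu$-th Bowen metric space. The only point that demands vigilance is the combinatorial bookkeeping: in the first direction, one must read off from the negation of the separation condition that the combinatorial words must coincide; in the second direction, one must exploit the fact that the spanning condition only guarantees spatial proximity when the combinatorial words already agree, so that the triangle inequality in the spherical metric is actually applicable coordinatewise.
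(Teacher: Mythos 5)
Your proposal is correct and follows essentially the same route as the paper: a maximality argument showing a maximal separated family is spanning for one inequality, and an injection from a separated family into a spanning family via the triangle inequality for the other. The strict-versus-non-strict boundary mismatch you flag in the first direction is genuine and present in the paper's own argument (which silently writes ``$\leq \epsilon$'' and then calls the family $(\nu,\epsilon)$-spanning), though it is harmless for the downstream definition of entropy since that takes a supremum over $\epsilon$.
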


\begin{proof}
Let $\mathcal{F}$ be some $(\nu, \epsilon)$-separated family in $\mathscr{P}_{\nu}^{\Gamma} (\overline{\mathbb{C}})$ where the maximum cardinality is achieved, {\it i.e.}, 
\[ \mathcal{F}\ \ =\ \ \left\{ \mathfrak{X}_{\nu}^{+} \left( x^{(1)}; \boldsymbol{\alpha}^{(1)} \right),\ \mathfrak{X}_{\nu}^{+} \left( x^{(2)}; \boldsymbol{\alpha}^{(2)} \right),\ \cdots,\ \mathfrak{X}_{\nu}^{+} \left( x^{(R_{\nu}(\epsilon))} ; \boldsymbol{\alpha}^{(R_{\nu}(\epsilon))} \right) \right\}. \] 
Note that $N^{\nu} \le R_{\nu}(\epsilon)$. Now consider an arbitrary orbit $\mathfrak{X}_{\nu}^{+} \left( x; \boldsymbol{\alpha} \right) \in \mathscr{P}_{\nu}^{\Gamma} (\overline{\mathbb{C}})$. Then, $\boldsymbol{\alpha} \equiv \boldsymbol{\alpha}^{(k)}$ for some $1 \le k \le R_{\nu}(\epsilon)$. Without loss of generality, assume that $\boldsymbol{\alpha} = \boldsymbol{\alpha}^{(R_{\nu}(\epsilon))}$. For this fixed $\boldsymbol{\alpha}^{(R_{\nu}(\epsilon))}$, consider $\mathcal{F}_{\boldsymbol{\alpha}^{(R_{\nu}(\epsilon))}} = \left\{ \mathfrak{X}_{\nu}^{+} \left( \cdot; \boldsymbol{\alpha}^{(R_{\nu}(\epsilon))} \right) \in \mathcal{F} \right\}$. Then, owing to the maximality of the family $\mathcal{F}$, it is obvious that there exists some point, say $\mathfrak{X}_{\nu}^{+} \left( x_{0}, \boldsymbol{\alpha}^{(R_{\nu}(\epsilon))} \right) \in \mathcal{F}_{\boldsymbol{\alpha}^{(R_{\nu}(\epsilon))}}$ that satisfies 
\[ d_{\overline{\mathbb{C}}} \left( \Pi_{j}^{+} \left( \mathfrak{X}_{\nu}^{+} \left( x; \boldsymbol{\alpha}^{(R_{\nu}(\epsilon))} \right) \right),\ \Pi_{j}^{+} \left( \mathfrak{X}_{\nu}^{+} \left( x_{0}; \boldsymbol{\alpha}^{(R_{\nu}(\epsilon))} \right) \right) \right)\ \ \le\ \ \epsilon\ \ \ \ \text{for all}\ 0 \le j \le \nu. \] 
Thus, $\mathcal{F} \subseteq \mathscr{P}_{\nu}^{\Gamma} (\overline{\mathbb{C}})$ is a $(\nu, \epsilon)$-spanning family of $\mathscr{P}_{\nu}^{\Gamma} (\overline{\mathbb{C}})$. Hence, $S_{\nu}(\epsilon) \le R_{\nu}(\epsilon)$. 

To prove the other inequality, we now consider $\mathcal{F}$ to be a $(\nu, \epsilon)$-separated family and $\mathcal{G}$ to be a $(\nu, \epsilon/2)$-spanning family of permissible $\nu$-orbits in $\mathscr{P}_{\nu}^{\Gamma} (\overline{\mathbb{C}})$. Define a function $\tau : \mathcal{F} \longrightarrow \mathcal{G}$ as $\tau \left( \mathfrak{X}_{\nu}^{+} \left( x_{0}; \boldsymbol{\alpha} \right) \right) = \mathfrak{X}_{\nu}^{+} \left( y_{0}; \boldsymbol{\alpha} \right)$ that satisfies 
\[ d_{\overline{\mathbb{C}}} \left( \Pi_{j}^{+} \left( \mathfrak{X}_{\nu}^{+} \left( x_{0}; \boldsymbol{\alpha} \right) \right),\ \Pi_{j}^{+} \left( \mathfrak{X}_{\nu}^{+} \left( y_{0}; \boldsymbol{\alpha} \right) \right) \right) < \dfrac{\epsilon}{2}\ \ \text{for all}\ \ 0 \le j \le \nu. \] 
The existence of such an  element $\mathfrak{X}_{\nu}^{+} \left( y_{0}; \boldsymbol{\alpha} \right)$ in $\mathcal{G}$ is clear from the definition of the spanning set, Definition \eqref{spanning}. Moreover, $\tau$ is injective since suppose there exist two points, say, $\mathfrak{X}_{\nu}^{+} \left( x_{1}; \boldsymbol{\alpha} \right)$ and  $\mathfrak{X}_{\nu}^{+} \left( x_{2}; \boldsymbol{\alpha} \right) \in \mathcal{F}$ with the property that $\tau \left( \mathfrak{X}_{\nu}^{+} \left( x_{1}; \boldsymbol{\alpha} \right) \right) \equiv \tau \left( \mathfrak{X}_{\nu}^{+} \left( x_{2}; \boldsymbol{\alpha} \right) \right)$, then it contradicts our hypothesis that $\mathcal{F}$ is $(\nu, \epsilon)$-separated. Hence, $\# \mathcal{F} \le \# \mathcal{G}$. Thus, we obtain $R_{\nu}(\epsilon) \le S_{\nu} (\epsilon/2)$. 
\end{proof} 

The proof above also illustrates that for a fixed $\nu$, when $\epsilon_{1} < \epsilon_{2}$ we have $S_{\nu}(\epsilon_{1}) \ge S_{\nu}(\epsilon_{2})$. Thus, it follows directly from Theorem \eqref{T3.3 ENTROPY BY SEPARATED} and Lemma \eqref{T4.1 ENTROPY CARD OF SEP & SPAN} that the topological entropy of the correspondence $\Gamma$ can be alternately characterised using spanning sets. Thus, we have proved: 

\begin{theorem}
Let $\Gamma$ be a holomorphic correspondence defined on $\overline{\mathbb{C}}$, as represented in Equation $\eqref{correspondence}$ and $\mathscr{P}_{\nu}^{\Gamma} (\overline{\mathbb{C}})$ be the collection of all permissible $\nu$-orbits of $\Gamma$. Then if $S_{\nu}(\epsilon)$ denotes the smallest cardinality of some $(\nu, \epsilon)$-spanning family of orbits associated with $\Gamma$, then the limit $\sup\limits_{\epsilon\, >\, 0}\; \limsup\limits_{\nu\, \rightarrow\, \infty}\; \dfrac{1}{\nu}\; \log \left( S_{\nu} (\epsilon) \right)$ exists and is equal to the topological entropy of the holomorphic correspondence $\Gamma$, {\it i.e.}, 
\begin{eqnarray*} 
h_{{\rm top}} (\Gamma) & = & \sup_{\epsilon\, >\, 0}\; \limsup_{\nu\, \to\, \infty}\; \frac{1}{\nu}\; \log \left( \min \left\{ \# \mathcal{G}\ :\ \mathcal{G} \subseteq \mathscr{P}_{\nu}^{\Gamma} \left( \overline{\mathbb{C}} \right)\ \text{is a} \right. \right. \\ 
& & \left. \left. \hspace{+5cm} (\nu, \epsilon)\text{-spanning family of}\ \mathscr{P}_{\nu}^{\Gamma} \left( \overline{\mathbb{C}} \right) \right\} \right). 
\end{eqnarray*} 
\end{theorem}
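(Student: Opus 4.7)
The plan is to deduce the theorem as an immediate consequence of two earlier results in this section: the Dinh-Sibony characterization of $h_{\rm top}(\Gamma)$ via $(\nu,\epsilon)$-separated sets, and the sandwich $S_{\nu}(\epsilon) \le R_{\nu}(\epsilon) \le S_{\nu}(\epsilon/2)$ furnished by the preceding lemma. The overall strategy is a straightforward squeeze, performed first at a fixed $\epsilon > 0$ and then promoted by passing to the supremum over $\epsilon$.

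First, applying $\log$ to the chain $S_{\nu}(\epsilon) \le R_{\nu}(\epsilon) \le S_{\nu}(\epsilon/2)$, dividing by $\nu$, and passing to $\limsup_{\nu \to \infty}$ gives, for each fixed $\epsilon > 0$,
\[
\limsup_{\nu \to \infty}\; \tfrac{1}{\nu} \log S_{\nu}(\epsilon) \;\le\; \limsup_{\nu \to \infty}\; \tfrac{1}{\nu} \log R_{\nu}(\epsilon) \;\le\; \limsup_{\nu \to \infty}\; \tfrac{1}{\nu} \log S_{\nu}(\epsilon/2).
\]
Taking $\sup_{\epsilon > 0}$ termwise, the left inequality yields $\sup_{\epsilon} \limsup_{\nu} \tfrac{1}{\nu} \log S_{\nu}(\epsilon) \le h_{\rm top}(\Gamma)$ directly from the Dinh-Sibony theorem. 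For the reverse, the change of variable $\epsilon' = \epsilon/2$ shows that
\[
\sup_{\epsilon > 0}\; \limsup_{\nu \to \infty}\; \tfrac{1}{\nu} \log S_{\nu}(\epsilon/2) \;=\; \sup_{\epsilon' > 0}\; \limsup_{\nu \to \infty}\; \tfrac{1}{\nu} \log S_{\nu}(\epsilon'),
\]
since the map $\epsilon \mapsto \epsilon/2$ is a bijection of $(0, \infty)$. Combining this with the supremum of the middle term in the squeeze produces $h_{\rm top}(\Gamma) \le \sup_{\epsilon} \limsup_{\nu} \tfrac{1}{\nu} \log S_{\nu}(\epsilon)$, closing the loop.

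The existence claim for $\sup_{\epsilon > 0} \limsup_{\nu \to \infty} \tfrac{1}{\nu} \log S_{\nu}(\epsilon)$ is automatic in $[0, \infty]$; moreover, the monotonicity $S_{\nu}(\epsilon_{1}) \ge S_{\nu}(\epsilon_{2})$ when $\epsilon_{1} < \epsilon_{2}$, recorded in the remark immediately preceding the theorem, implies that the supremum coincides with the limit as $\epsilon \to 0^{+}$, and finiteness follows from the identification with $h_{\rm top}(\Gamma)$. No real technical difficulty is expected: the proof is entirely the two-line squeeze above, and the only step that deserves care is the change of variable in the outer supremum, which is valid precisely because scaling by $1/2$ preserves the range $(0, \infty)$.
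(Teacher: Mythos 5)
Your proof is correct and is essentially the same as the paper's: the paper's own argument is a one-sentence deduction from the Dinh--Sibony separated-set characterization, the sandwich $S_{\nu}(\epsilon) \le R_{\nu}(\epsilon) \le S_{\nu}(\epsilon/2)$ of Lemma \ref{T4.1 ENTROPY CARD OF SEP & SPAN}, and the monotonicity of $S_{\nu}(\cdot)$. You have merely spelled out the squeeze and the change of variable $\epsilon \mapsto \epsilon/2$ in the outer supremum that the paper leaves implicit.
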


\section{The pressure of continuous functions on $\overline{\mathbb{C}}$} 
\label{prescont} 

For any domain $X$, let $\mathcal{C} \left(X, \mathbb{R} \right)$ denote the set of all real-valued continuous functions defined on the domain $X$, equipped with the supremum norm given by 
\[ \left\| f \right\|_{\infty}\ \ :=\ \ \sup_{x\, \in\, X}\; \left| f(x) \right|. \] 
Suppose $T : X \longrightarrow X$ is a continuous transformation, then recall that for any function $f \in \mathcal{C} \left( X, \mathbb{R} \right)$ and for any $\nu \in \mathbb{Z}_{+}$, we define the $\nu^{{\rm th}}$ order ergodic sum of $f$ at any point $x_{0} \in X$ that moves on its trajectory under the action of $T$ as 
\begin{equation} 
\label{ergsumT} 
f_{\nu} (x_{0})\ \ =\ \ f(x_{0}) + f \left( T(x_{0}) \right) + \cdots + f \left( T^{\nu - 1} (x_{0}) \right). 
\end{equation} 
Thus, for any function $f \in \mathcal{C} \left( \overline{\mathbb{C}}, \mathbb{R} \right)$ and for any $\nu \in \mathbb{Z}_{+}$, it makes sense for us to consider the function $\mathcal{Z}_{\nu}^{+} (f)$ as an element in $\mathcal{C} \left( \mathscr{P}_{\nu}^{\Gamma} \left( \overline{\mathbb{C}} \right), \mathbb{R} \right)$ (owing to the holomorphicity of $\Gamma$) and define the $\nu^{{\rm th}}$ order ergodic sum, under the current context as 
\begin{equation}
\label{Summation}
\mathcal{Z}_{\nu}^{+} (f) \left( \mathfrak{X}_{\nu}^{+} \left( x_{0}; \boldsymbol{\alpha} \right) \right)\ \ =\ \ \sum_{0\, \le\, j\, \le\, \nu - 1} f \left( \Pi_{j}^{+} \left( \mathfrak{X}_{\nu}^{+} \left( x_{0}; \boldsymbol{\alpha} \right) \right) \right). 
\end{equation}

For any $\mathcal{Z}_{\nu}^{+} (f) \in \mathcal{C} \left( \mathscr{P}_{\nu}^{\Gamma} \left( \overline{\mathbb{C}} \right), \mathbb{R} \right)$ corresponding to $f \in \mathcal{C} \left( \overline{\mathbb{C}}, \mathbb{R} \right)$ and $\epsilon > 0$, let
\[ \mathscr{S}^{\Gamma}_{\nu}(f, \epsilon) = \inf_{\mathcal{G}} \left\{ \sum_{ \mathfrak{X}_{\nu}^{+} (x; \boldsymbol{\alpha})\, \in\, \mathcal{G}}\; e^{\mathcal{Z}_{\nu}^{+} (f) \left(\mathfrak{X}_{\nu}^{+} ( x; \boldsymbol{\alpha})\right) } \right\}, \] 
where the infimum is taken over all possible $(\nu, \epsilon)$-spanning family $\mathcal{G}$ of $\mathscr{P}_{\nu}^{\Gamma} (\overline{\mathbb{C}})$. Having defined $\mathscr{S}^{\Gamma}_{\nu}(f, \epsilon)$, we now state a few trivial properties of the same. 

\begin{lemma} 
\label{CON}
\begin{enumerate}
\item $0 < \mathscr{S}^{\Gamma}_{\nu}(f, \epsilon) \leq S_{\nu} (\epsilon) \left\| e^{\mathcal{Z}_{\nu}^{+} (f)} \right\|_{\infty} < \infty$, where $S_{\nu} (\epsilon)$ is as defined in Lemma \eqref{T4.1 ENTROPY CARD OF SEP & SPAN}. 
\item If $\epsilon_{1} < \epsilon_{2}$, then $\mathscr{S}^{\Gamma}_{\nu}\left( f, \epsilon_{1} \right) \geq \mathscr{S}^{\Gamma}_{\nu} \left(f, \epsilon_{2} \right)$.
\item $\mathscr{S}^{\Gamma}_{\nu}(0, \epsilon) = S_{\nu} (\epsilon)$, where $0$ denotes the constant function zero.
\end{enumerate}
\end{lemma}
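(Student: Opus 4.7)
All three items follow from direct inspection of the definitions; I would handle them in order. For (i), every $(\nu,\epsilon)$-spanning family $\mathcal{G}$ is nonempty and every summand $e^{\mathcal{Z}_{\nu}^{+}(f)(\cdot)}$ is bounded below by $e^{-\nu\|f\|_{\infty}}>0$ since $f$ attains its supremum on the compact set $\overline{\mathbb{C}}$; taking the infimum over such families preserves this positive lower bound and gives $\mathscr{S}^{\Gamma}_{\nu}(f,\epsilon)>0$. For the upper bound, I would pick a $(\nu,\epsilon)$-spanning family of minimal cardinality $S_{\nu}(\epsilon)$, which is finite by the compactness of $\mathscr{P}_{\nu}^{\Gamma}(\overline{\mathbb{C}})$ noted after Lemma \eqref{T4.1 ENTROPY CARD OF SEP & SPAN}; inserting this particular $\mathcal{G}$ into the infimum bounds $\mathscr{S}^{\Gamma}_{\nu}(f,\epsilon)$ above by $S_{\nu}(\epsilon)\,\|e^{\mathcal{Z}_{\nu}^{+}(f)}\|_{\infty}$, which is itself finite because $\mathcal{Z}_{\nu}^{+}(f)$ is continuous on the compact space $\mathscr{P}_{\nu}^{\Gamma}(\overline{\mathbb{C}})$.

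For (ii), the strict inequality $d_{\overline{\mathbb{C}}}(\cdot,\cdot)<\epsilon$ appearing in Definition \eqref{spanning} makes any $(\nu,\epsilon_{1})$-spanning family automatically $(\nu,\epsilon_{2})$-spanning whenever $\epsilon_{1}<\epsilon_{2}$. Therefore the admissible class over which the infimum defining $\mathscr{S}^{\Gamma}_{\nu}(f,\epsilon_{2})$ is taken contains that for $\epsilon_{1}$, and enlarging the class over which an infimum is taken can only decrease its value, giving $\mathscr{S}^{\Gamma}_{\nu}(f,\epsilon_{2})\le \mathscr{S}^{\Gamma}_{\nu}(f,\epsilon_{1})$. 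For (iii), with $f\equiv 0$ the ergodic sum $\mathcal{Z}_{\nu}^{+}(0)$ vanishes identically on $\mathscr{P}_{\nu}^{\Gamma}(\overline{\mathbb{C}})$, so each summand equals $1$ and the sum reduces to $\#\mathcal{G}$; taking the infimum over all $(\nu,\epsilon)$-spanning families recovers $S_{\nu}(\epsilon)$ by its very definition.

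There is no real obstacle here: the three assertions are straightforward book-keeping consequences of the definitions, and the only ingredients beyond unwinding notation are the compactness of $\mathscr{P}_{\nu}^{\Gamma}(\overline{\mathbb{C}})$ and the continuity of the observable $\mathcal{Z}_{\nu}^{+}(f)$, both already established earlier in the excerpt.
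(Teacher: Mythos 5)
Your proof is correct, and it is the standard direct-verification argument that the paper deliberately omits (as it does for the analogous lemma about $\mathscr{R}^{\Gamma}_{\nu}$, remarking that the proof is elementary). All three parts unwind the definitions exactly as one would expect: the two-sided bound in (i) comes from the pointwise bound $e^{-\nu\|f\|_\infty}\le e^{\mathcal{Z}_\nu^+(f)}\le\|e^{\mathcal{Z}_\nu^+(f)}\|_\infty$ together with finiteness of $S_\nu(\epsilon)$; (ii) follows because a $(\nu,\epsilon_1)$-spanning family is automatically $(\nu,\epsilon_2)$-spanning so the infimum defining $\mathscr{S}^{\Gamma}_{\nu}(f,\epsilon_2)$ ranges over a larger class; and (iii) is immediate since $\mathcal{Z}_\nu^+(0)\equiv 0$. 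One cosmetic remark: in part (i) the lower bound $e^{-\nu\|f\|_\infty}$ uses only that $\|f\|_\infty<\infty$ (continuity of $f$ on the compact sphere), not that the supremum is attained, so the parenthetical about $f$ attaining its supremum is unnecessary; everything else is clean.
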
 

The properties stated in Lemma \eqref{CON} naturally prompts the following definition, that is analogous to the definition of the topological pressure for continuous functions, in the case of maps.   

\begin{definition}
Let $\Gamma$ be a holomorphic correspondence defined on $\overline{\mathbb{C}}$, as represented in Equation $\eqref{correspondence}$. For any $f \in \mathcal{C} \left( \overline{\mathbb{C}}, \mathbb{R} \right)$, we define 
\[ {\rm Pr} (\Gamma, f) = \lim_{\epsilon\, \to\, 0}\; \left( \limsup_{\nu\, \to\, \infty}\; \frac{1}{\nu} \log \mathscr{S}^{\Gamma}_{\nu} (f, \epsilon) \right). \] 
\end{definition}

The quantity inside the brackets in the above equation is monotonically increasing, as $\epsilon$ decreases and thus, ensures the existence of ${\rm Pr} (\Gamma, f)$, however, the limit can be equal to $+ \infty$. Observe that we used the set of all $(\nu, \epsilon)$-spanning families to define $\mathscr{S}^{\Gamma}_{\nu} (f, \epsilon)$, that in turn was used in defining ${\rm Pr} (\Gamma, f)$. Our next natural step is to provide a characterisation for ${\rm Pr} (\Gamma, f)$ using $(\nu, \epsilon)$-separated sets. Towards that end, we now define the following. 

For any $\mathcal{Z}_{\nu}^{+} (f) \in \mathcal{C} \left( \mathscr{P}_{\nu}^{\Gamma} \left( \overline{\mathbb{C}} \right), \mathbb{R} \right)$ corresponding to $f \in \mathcal{C} \left( \overline{\mathbb{C}}, \mathbb{R} \right)$ and $\epsilon > 0$, let
\begin{equation}
\label{separated nu eps}
\mathscr{R}^{\Gamma}_{\nu}(f, \epsilon) = \sup_{\mathcal{F}} \left\{ \sum_{ \mathfrak{X}_{\nu}^{+} (x; \boldsymbol{\alpha})\, \in\, \mathcal{F}}\; e^{\mathcal{Z}_{\nu}^{+} (f) \left(\mathfrak{X}_{\nu}^{+} ( x; \boldsymbol{\alpha})\right) } \right\}, 
\end{equation}
where the supremum is taken over all possible $(\nu, \epsilon)$-separated family in $\mathscr{P}_{\nu}^{\Gamma} (\overline{\mathbb{C}})$. Then, as earlier, we have the following lemma for $\mathscr{R}^{\Gamma}_{\nu}(f, \epsilon)$. 

\begin{lemma} 
\begin{enumerate}
\item $0 < \mathscr{R}^{\Gamma}_{\nu}(f, \epsilon) \leq R_{\nu} (\epsilon) \left\| e^{\mathcal{Z}_{\nu}^{+} (f)} \right\|_{\infty} < \infty$, where $R_{\nu} (\epsilon)$ is as defined in Lemma \eqref{T4.1 ENTROPY CARD OF SEP & SPAN}. 
\item If $\epsilon_{1} < \epsilon_{2}$, then $\mathscr{R}^{\Gamma}_{\nu}\left( f, \epsilon_{1} \right) \geq \mathscr{R}^{\Gamma}_{\nu} \left(f, \epsilon_{2} \right)$.
\item $\mathscr{R}^{\Gamma}_{\nu}(0, \epsilon) = R_{\nu} (\epsilon)$, where $0$ denotes the constant function zero.
\end{enumerate}
\end{lemma}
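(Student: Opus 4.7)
The plan is to mirror the proof structure of Lemma \eqref{CON}, substituting the supremum over $(\nu,\epsilon)$-separated families for the infimum over $(\nu,\epsilon)$-spanning families, and invoking the cardinality bound $R_\nu(\epsilon)$ in place of $S_\nu(\epsilon)$. All three items will follow from the definitions together with the compactness of $\mathscr{P}_{\nu}^{\Gamma}(\overline{\mathbb{C}})$ already noted after Theorem \eqref{T3.3 ENTROPY BY SEPARATED}.

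For item (1), I would first observe that any singleton $\{\mathfrak{X}_{\nu}^{+}(x_{0}; \boldsymbol{\alpha})\} \subseteq \mathscr{P}_{\nu}^{\Gamma}(\overline{\mathbb{C}})$ is vacuously $(\nu,\epsilon)$-separated, so it contributes the strictly positive quantity $e^{\mathcal{Z}_{\nu}^{+}(f)(\mathfrak{X}_{\nu}^{+}(x_{0}; \boldsymbol{\alpha}))}$ to the supremum in Equation \eqref{separated nu eps}, yielding $\mathscr{R}^{\Gamma}_{\nu}(f,\epsilon) > 0$. For the upper bound, I would note that for any $(\nu,\epsilon)$-separated family $\mathcal{F}$ we have $\#\mathcal{F} \leq R_{\nu}(\epsilon)$ by definition of $R_{\nu}(\epsilon)$, and each summand is bounded above by $\|e^{\mathcal{Z}_{\nu}^{+}(f)}\|_{\infty}$; summing gives $\mathscr{R}^{\Gamma}_{\nu}(f,\epsilon) \leq R_{\nu}(\epsilon)\|e^{\mathcal{Z}_{\nu}^{+}(f)}\|_{\infty}$. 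Finiteness of $R_{\nu}(\epsilon)$ follows from the compactness of $\mathscr{P}_{\nu}^{\Gamma}(\overline{\mathbb{C}})$ together with the finiteness of $\|e^{\mathcal{Z}_{\nu}^{+}(f)}\|_{\infty}$, which is automatic since $f$ is continuous on the compact space $\overline{\mathbb{C}}$.

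For item (2), the key observation is that if $\epsilon_{1} < \epsilon_{2}$, then every $(\nu,\epsilon_{2})$-separated family is also $(\nu,\epsilon_{1})$-separated: the metric condition $d_{\overline{\mathbb{C}}}(\,\cdot\,,\,\cdot\,) > \epsilon_{2}$ immediately upgrades to $d_{\overline{\mathbb{C}}}(\,\cdot\,,\,\cdot\,) > \epsilon_{1}$, while the symbolic condition on $\mathrm{proj}_{j}^{+}$ does not involve $\epsilon$ at all. Hence the collection of families over which one takes the supremum in defining $\mathscr{R}^{\Gamma}_{\nu}(f,\epsilon_{1})$ contains the one defining $\mathscr{R}^{\Gamma}_{\nu}(f,\epsilon_{2})$, and the inequality $\mathscr{R}^{\Gamma}_{\nu}(f,\epsilon_{1}) \geq \mathscr{R}^{\Gamma}_{\nu}(f,\epsilon_{2})$ is immediate. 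Item (3) follows by substituting $f \equiv 0$, so that $\mathcal{Z}_{\nu}^{+}(0) \equiv 0$ and each exponential equals $1$; the sum over $\mathcal{F}$ then reduces to $\#\mathcal{F}$, and the supremum over all $(\nu,\epsilon)$-separated families is $R_{\nu}(\epsilon)$ by definition.

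There is no substantive obstacle here; the proof is a routine adaptation of Lemma \eqref{CON}. The only point worth flagging is that in contrast to the spanning-set setting, here the sum is maximised rather than minimised, so one must be careful to note that the upper cardinality bound by $R_{\nu}(\epsilon)$ is exactly what makes the uniform supremum bound finite, whereas positivity is handed to us for free by the non-emptiness of $\mathscr{P}_{\nu}^{\Gamma}(\overline{\mathbb{C}})$.
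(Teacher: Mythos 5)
Your proof is correct, and since the paper explicitly omits the proof (declaring it ``elementary''), your direct verification of the three items from the definitions is exactly the intended argument. One small point worth making explicit in item (2): the set of $(\nu,\epsilon_{2})$-separated families is a subset of the set of $(\nu,\epsilon_{1})$-separated families, so the supremum over the larger collection dominates — you state this, and it is the right mechanism.
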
 

Since the proof of the above lemma is elementary, we omit the same. However, we now write and prove a lemma that compares the quantities, $\mathscr{R}^{\Gamma}_{\nu} (f, \epsilon)$ and $\mathscr{S}^{\Gamma}_{\nu} (f, \epsilon)$. 

\begin{lemma} 
\label{Pressure technical theorem} 
Let $f \in \mathcal{C} (\overline{\mathbb{C}}, \mathbb{R})$. Suppose $\delta > 0$ is such that for any points $x, y \in \overline{\mathbb{C}}$ that are atmost $\epsilon/2$-apart for some $\epsilon >0$, we have $|f(x) - f(y)| < \delta$. Then, $\mathscr{R}^{\Gamma}_{\nu} (f, \epsilon) \leq e^{-\nu \delta} \mathscr{S}^{\Gamma}_{\nu} (f, \epsilon/2)$. 
\end{lemma}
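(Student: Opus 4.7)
The plan is to adapt the injection argument used in Lemma \eqref{T4.1 ENTROPY CARD OF SEP & SPAN} to the weighted (pressure) setting: pair each element of a near-optimal $(\nu,\epsilon)$-separated family with a nearby element of a near-optimal $(\nu,\epsilon/2)$-spanning family, and then compare the resulting exponentiated ergodic sums using the modulus-of-continuity hypothesis on $f$. Concretely, I would fix arbitrary $(\nu,\epsilon)$-separated $\mathcal{F}\subseteq\mathscr{P}_\nu^\Gamma(\overline{\mathbb{C}})$ and $(\nu,\epsilon/2)$-spanning $\mathcal{G}\subseteq\mathscr{P}_\nu^\Gamma(\overline{\mathbb{C}})$ and construct $\tau:\mathcal{F}\to\mathcal{G}$ exactly as in Lemma \eqref{T4.1 ENTROPY CARD OF SEP & SPAN}: for each $\mathfrak{X}_\nu^+(x_0;\boldsymbol{\alpha})\in\mathcal{F}$, Definition \eqref{spanning} supplies $\mathfrak{X}_\nu^+(y_0;\boldsymbol{\alpha})\in\mathcal{G}$ with identical symbolic data $\boldsymbol{\alpha}$ whose spatial coordinates lie within $\epsilon/2$; a collision $\tau(\mathfrak{X}_\nu^+(x_1;\boldsymbol{\alpha}))=\tau(\mathfrak{X}_\nu^+(x_2;\boldsymbol{\alpha}))$ would bring two distinct $\mathcal{F}$-elements within $\epsilon$ coordinate-wise via the triangle inequality, contradicting $(\nu,\epsilon)$-separatedness.

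Next, for each $\mathfrak{X}_\nu^+(x_0;\boldsymbol{\alpha})\in\mathcal{F}$ with image $\tau(\mathfrak{X}_\nu^+(x_0;\boldsymbol{\alpha}))=\mathfrak{X}_\nu^+(y_0;\boldsymbol{\alpha})$, I would invoke the modulus-of-continuity hypothesis at each of the $\nu$ indices $j=0,\ldots,\nu-1$ applied to the $\epsilon/2$-close pair $\Pi_j^+(\mathfrak{X}_\nu^+(x_0;\boldsymbol{\alpha})),\ \Pi_j^+(\mathfrak{X}_\nu^+(y_0;\boldsymbol{\alpha}))$, and sum these $\nu$ estimates to bound $|\mathcal{Z}_\nu^+(f)(\mathfrak{X}_\nu^+(x_0;\boldsymbol{\alpha}))-\mathcal{Z}_\nu^+(f)(\mathfrak{X}_\nu^+(y_0;\boldsymbol{\alpha}))|<\nu\delta$. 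Exponentiating yields a pointwise comparison between the summands of $\mathscr{R}^\Gamma_\nu(f,\epsilon)$ and the summands of $\mathscr{S}^\Gamma_\nu(f,\epsilon/2)$ at the paired points; I would then sum over $\mathcal{F}$, invoke injectivity of $\tau$ together with nonnegativity of the summands to dominate the $\tau(\mathcal{F})$-restricted sum by the full sum over $\mathcal{G}$, and finally take the supremum over $(\nu,\epsilon)$-separated $\mathcal{F}$ and the infimum over $(\nu,\epsilon/2)$-spanning $\mathcal{G}$ to transition from single-family sums to the quantities $\mathscr{R}^\Gamma_\nu(f,\epsilon)$ and $\mathscr{S}^\Gamma_\nu(f,\epsilon/2)$.

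The principal obstacle I foresee is the sign of the exponential prefactor produced by this argument. The $\nu$ coordinate-wise applications of the modulus-of-continuity estimate naturally furnish a factor $e^{+\nu\delta}$ when majorising the separated sum by the spanning sum, so extracting the stated factor $e^{-\nu\delta}$ will require either exploiting a sign convention implicit in the hypothesis (for example, reading $\delta$ as a signed lower bound coming from applying the modulus-of-continuity estimate to $-f$ in place of $f$, under which the direction of pairing reverses) or designing an alternative pairing scheme in which each element of a minimising spanning family is matched to a member of the separated family, so that the exponential deficit $-\nu\delta$ becomes available on the correct side of the inequality. I expect careful bookkeeping of which family carries the larger summands -- the separated family (maximising) versus the spanning family (minimising) -- together with the precise direction in which the $\nu\delta$ error accumulates, will resolve the sign question and allow the argument above to be completed.
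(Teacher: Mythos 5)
Your approach is exactly the paper's: transport the injection $\tau:\mathcal{F}\to\mathcal{G}$ from the earlier counting lemma comparing $R_{\nu}(\epsilon)$ and $S_{\nu}(\epsilon/2)$ to the weighted setting, and compare $\mathcal{Z}_{\nu}^{+}(f)$ along paired orbits using the modulus-of-continuity hypothesis. The obstacle you flag in your final paragraph is genuine and, you should note, not resolvable by any bookkeeping: the hypothesis only gives
\[
\left|\mathcal{Z}_{\nu}^{+}(f)\left(\tau\left(\mathfrak{X}_{\nu}^{+}(x;\boldsymbol{\alpha})\right)\right)-\mathcal{Z}_{\nu}^{+}(f)\left(\mathfrak{X}_{\nu}^{+}(x;\boldsymbol{\alpha})\right)\right|\ <\ \nu\delta,
\]
hence $e^{\mathcal{Z}_{\nu}^{+}(f)(\tau(\cdot))-\mathcal{Z}_{\nu}^{+}(f)(\cdot)}>e^{-\nu\delta}$, and summing over $\mathcal{F}$, using injectivity of $\tau$ and positivity of the summands, yields $\mathscr{S}^{\Gamma}_{\nu}(f,\epsilon/2)\ \ge\ e^{-\nu\delta}\,\mathscr{R}^{\Gamma}_{\nu}(f,\epsilon)$, that is, $\mathscr{R}^{\Gamma}_{\nu}(f,\epsilon)\ \le\ e^{+\nu\delta}\,\mathscr{S}^{\Gamma}_{\nu}(f,\epsilon/2)$. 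The inequality as printed, with the factor $e^{-\nu\delta}$, is in fact false: for constant $f$ the hypothesis holds for every $\delta>0$, and the printed bound would then force $\mathscr{R}^{\Gamma}_{\nu}(f,\epsilon)=0$, contradicting its positivity. The paper's own proof commits precisely the sign error you anticipated, in its last displayed step, where it asserts that the minimum over $\mathcal{F}$ of $e^{\mathcal{Z}_{\nu}^{+}(f)(\tau(\cdot))-\mathcal{Z}_{\nu}^{+}(f)(\cdot)}$ is at least $e^{\nu\delta}$; the hypothesis only supplies the lower bound $e^{-\nu\delta}$. So your argument, carried out exactly as you describe with the exponent read as $+\nu\delta$, is the correct proof of the corrected statement; there is no alternative pairing scheme that recovers $e^{-\nu\delta}$. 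The discrepancy is harmless downstream: in the derivation of the characterisation of ${\rm Pr}(\Gamma,f)$ via separated families, the factor $e^{\pm\nu\delta}$ contributes only $\pm\delta$ to $\limsup_{\nu\to\infty}\frac{1}{\nu}\log(\cdot)$, and $\delta\to0$ as $\epsilon\to0$ by uniform continuity of $f$ on the compact $\overline{\mathbb{C}}$, so Theorem \eqref{Pressure charac theorem} stands.
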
 

\begin{proof}
Let $\mathcal{F}$ be a $(\nu, \epsilon)$-separated family in $\mathscr{P}_{\nu}^{\Gamma} (\overline{\mathbb{C}})$ and $\mathcal{G}$, a $(\nu, \epsilon/2)$-spanning family of $\mathscr{P}_{\nu}^{\Gamma} (\overline{\mathbb{C}})$. Then, as in the proof of Lemma \eqref{T4.1 ENTROPY CARD OF SEP & SPAN}, the function $\tau : \mathcal{F} \longrightarrow \mathcal{G}$ is injective. Thus, 
\begin{eqnarray*} 
& & \sum_{\mathfrak{X}_{\nu}^{+} (y; \boldsymbol{\beta})\, \in\, \mathcal{G}} e^{\mathcal{Z}_{\nu}^{+} (f) \left(\mathfrak{X}_{\nu}^{+} ( y; \boldsymbol{\beta}) \right)} \\ 
& \ge & \sum_{\mathfrak{X}_{\nu}^{+} (y; \boldsymbol{\beta})\, \in\, \tau \left( \mathcal{F} \right)} e^{\mathcal{Z}_{\nu}^{+} (f) \left(\mathfrak{X}_{\nu}^{+} (y; \boldsymbol{\beta})\right)} \\ 
& \ge & \min_{\mathfrak{X}_{\nu}^{+} (x; \boldsymbol{\alpha})\, \in\, \mathcal{F}} \left( e^{\mathcal{Z}_{\nu}^{+} (f) \left( \tau \left( \mathfrak{X}_{\nu}^{+} (x; \boldsymbol{\alpha}) \right) \right)\; -\; \mathcal{Z}_{\nu}^{+} (f) \left( \mathfrak{X}_{\nu}^{+} (x; \boldsymbol{\alpha}) \right)} \right) \sum_{\mathfrak{X}_{\nu}^{+} (x; \boldsymbol{\alpha})\, \in\, \mathcal{F}} e^{\mathcal{Z}_{\nu}^{+} f \left( \mathfrak{X}_{\nu}^{+} (x; \boldsymbol{\alpha}) \right)} \\ 
& \ge & e^{\nu \delta}\; \times\; \sum_{\mathfrak{X}_{\nu}^{+} (x; \boldsymbol{\alpha})\, \in\, \mathcal{F}} e^{\mathcal{Z}_{\nu}^{+} (f) \left( \mathfrak{X}_{\nu}^{+} (x; \boldsymbol{\alpha}) \right)}. 
\end{eqnarray*} 
Thus, $\mathscr{S}^{\Gamma}_{\nu} (f, \epsilon/2) \geq e^{\nu \delta} \mathscr{R}^{\Gamma}_{\nu} (f, \epsilon)$.
\end{proof}

Having proved the lemma, we are now in a position to describe ${\rm Pr} (\Gamma, f)$ using $(\nu, \epsilon)$-separated sets. Recall from the proof of Lemma \eqref{T4.1 ENTROPY CARD OF SEP & SPAN} that any $(\nu, \epsilon)$-separated family with maximum cardinality in $\mathscr{P}_{\nu}^{\Gamma} (\overline{\mathbb{C}})$ must be a $(\nu, \epsilon)$-spanning family of $\mathscr{P}_{\nu}^{\Gamma} (\overline{\mathbb{C}})$. Thus, $\mathscr{S}^{\Gamma}_{\nu}(f, \epsilon) \leq \mathscr{R}^{\Gamma}_{\nu} (f, \epsilon)$. Also by Lemma \eqref{Pressure technical theorem}, we have $\mathscr{S}^{\Gamma}_{\nu} (f, \epsilon/2) \geq e^{\nu \delta} \mathscr{R}^{\Gamma}_{\nu} (f, \epsilon)$. Hence, 
\begin{eqnarray*} 
\lim_{\epsilon\, \to\, 0}\; \left( \limsup_{\nu\, \to\, \infty}\; \frac{1}{\nu} \log \mathscr{S}^{\Gamma}_{\nu} (f, \epsilon) \right) & \le & \lim_{\epsilon\, \to\, 0}\; \left( \limsup_{\nu\, \to\, \infty}\; \frac{1}{\nu} \log \mathscr{R}^{\Gamma}_{\nu} (f, \epsilon) \right) \\ 
& \le & \lim_{\epsilon\, \to\, 0}\; \left( \limsup_{\nu\, \to\, \infty}\; \frac{1}{\nu} \log \left( e^{- \nu \delta} \mathscr{S}^{\Gamma}_{\nu} (f, \epsilon/2) \right) \right) \\ 
& = & \lim_{\epsilon\, \to\, 0}\; \left( \limsup_{\nu\, \to\, \infty}\; \frac{1}{\nu} \log \mathscr{S}^{\Gamma}_{\nu} (f, \epsilon/2) \right)\; -\; \lim_{\epsilon\, \to\, 0} \delta \\ 
& = & {\rm Pr} (\Gamma, f), 
\end{eqnarray*} 
where from the statement of Lemma \eqref{Pressure technical theorem}, we know that $\delta \to 0$, as $\epsilon \to 0$, since $f \in \mathcal{C} (\overline{\mathbb{C}}, \mathbb{R})$. Thus, we have proved: 

\begin{theorem} 
\label{Pressure charac theorem}
Let $\Gamma$ be a holomorphic correspondence defined on $\overline{\mathbb{C}}$, as represented in Equation $\eqref{correspondence}$. Then, for any $f \in \mathcal{C}(\overline{\mathbb{C}}, \mathbb{R})$, 
\[ {\rm Pr} (\Gamma, f)\ \ =\ \ \lim_{\epsilon\, \to\, 0}\; \left( \limsup_{\nu\, \to\, \infty}\; \frac{1}{\nu} \log \mathscr{S}^{\Gamma}_{\nu} (f, \epsilon) \right)\ \ =\ \ \lim_{\epsilon\, \to\, 0}\; \left( \limsup_{\nu\, \to\, \infty}\; \frac{1}{\nu} \log \mathscr{R}^{\Gamma}_{\nu} (f, \epsilon) \right). \] 
\end{theorem}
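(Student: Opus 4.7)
The plan is to establish the second equality in the statement (the first is the very definition of ${\rm Pr}(\Gamma, f)$) by sandwiching $\mathscr{R}^{\Gamma}_{\nu}(f, \epsilon)$ between two expressions of the form $\mathscr{S}^{\Gamma}_{\nu}(f, \cdot)$, then passing through $\limsup_{\nu \to \infty}$ and $\lim_{\epsilon \to 0}$ to squeeze the two sides to the same value.

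First I would show $\mathscr{S}^{\Gamma}_{\nu}(f, \epsilon) \le \mathscr{R}^{\Gamma}_{\nu}(f, \epsilon)$ by recycling the key observation from the proof of Lemma \eqref{T4.1 ENTROPY CARD OF SEP & SPAN}: any $(\nu, \epsilon)$-separated family of maximum cardinality in $\mathscr{P}_{\nu}^{\Gamma}(\overline{\mathbb{C}})$ is automatically $(\nu, \epsilon)$-spanning. Consequently the infimum defining $\mathscr{S}^{\Gamma}_{\nu}(f, \epsilon)$ is dominated by the weighted exponential sum taken over any such maximal separated family, and in particular by its supremum over all $(\nu, \epsilon)$-separated families, namely $\mathscr{R}^{\Gamma}_{\nu}(f, \epsilon)$. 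For the reverse comparison I would invoke Lemma \eqref{Pressure technical theorem} directly, obtaining
\[ \mathscr{R}^{\Gamma}_{\nu}(f, \epsilon) \ \le\ e^{-\nu \delta}\, \mathscr{S}^{\Gamma}_{\nu}(f, \epsilon/2), \]
where $\delta = \delta(\epsilon)$ satisfies $|f(x) - f(y)| < \delta$ whenever $d_{\overline{\mathbb{C}}}(x, y) < \epsilon/2$, and $\delta(\epsilon) \to 0$ as $\epsilon \to 0$ by the uniform continuity of $f$ on the compact Riemann sphere.

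Taking logarithms, dividing by $\nu$, and applying $\limsup_{\nu\to\infty}$ followed by $\lim_{\epsilon\to 0}$ across the chain
\[ \tfrac{1}{\nu}\log \mathscr{S}^{\Gamma}_{\nu}(f, \epsilon) \ \le\ \tfrac{1}{\nu}\log \mathscr{R}^{\Gamma}_{\nu}(f, \epsilon) \ \le\ -\delta\, +\, \tfrac{1}{\nu}\log \mathscr{S}^{\Gamma}_{\nu}(f, \epsilon/2), \]
the $-\delta$ contribution vanishes in the $\epsilon \to 0$ limit, and the substitution $\epsilon \mapsto \epsilon/2$ is inconsequential once $\epsilon$ itself shrinks to $0$. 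The squeeze then pins $\lim_{\epsilon\to 0}\limsup_{\nu\to\infty}\tfrac{1}{\nu}\log \mathscr{R}^{\Gamma}_{\nu}(f, \epsilon)$ between two copies of ${\rm Pr}(\Gamma, f)$, which is exactly what is claimed.

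The one delicate point worth checking is the existence of $\lim_{\epsilon\to 0}$ in each of the two outer expressions; but both $\mathscr{S}^{\Gamma}_{\nu}(f, \epsilon)$ and $\mathscr{R}^{\Gamma}_{\nu}(f, \epsilon)$ are monotonically non-increasing in $\epsilon$ by Lemma \eqref{CON}(2) and its separated-sets counterpart, a property preserved by normalising by $\nu$ and by taking $\limsup_{\nu\to\infty}$, so $\lim_{\epsilon\to 0}$ is a monotone limit in $[0, +\infty]$. I do not anticipate any substantive obstacle beyond this; the quantitative bound supplied by Lemma \eqref{Pressure technical theorem} is the real workhorse, and everything else is bookkeeping around the $\limsup$ and the monotone limit in $\epsilon$.
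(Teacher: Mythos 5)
Your proposal is correct and follows the paper's own argument essentially verbatim: it derives $\mathscr{S}^{\Gamma}_{\nu}(f,\epsilon)\le\mathscr{R}^{\Gamma}_{\nu}(f,\epsilon)$ from the observation in the proof of Lemma \eqref{T4.1 ENTROPY CARD OF SEP & SPAN} that a maximal $(\nu,\epsilon)$-separated family is $(\nu,\epsilon)$-spanning, uses Lemma \eqref{Pressure technical theorem} for the reverse comparison, and passes to $\limsup_{\nu\to\infty}$ and then $\lim_{\epsilon\to 0}$ so the $\delta$-term (which tends to $0$ with $\epsilon$ by uniform continuity of $f$ on the compact $\overline{\mathbb{C}}$) disappears. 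The only cosmetic remark is that you (like the paper's statement of Lemma \eqref{Pressure technical theorem}) carry a sign on $\delta$ that is immaterial to the conclusion since the term vanishes in the limit either way.
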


It does make abundant sense to refer to the quantity ${\rm Pr} (\Gamma, f)$ as the \emph{topological pressure} of the given continuous function $f \in \mathcal{C}(\overline{\mathbb{C}}, \mathbb{R})$ with respect to the considered holomorphic correspondence $\Gamma$. We now state and verify a few standard properties of the same. 

\begin{property} 
\label{pressureofzero} 
The pressure of the zero function is indeed the topological entropy of the holomorphic correspondence $\Gamma$, {\it i.e.}, ${\rm Pr} (\Gamma, 0) = h_{{\rm top}} (\Gamma)$.
\end{property} 

\begin{property}
For any two functions $f, g \in \mathcal{C} (\overline{\mathbb{C}}, \mathbb{R})$ satisfying $f \leq g$ (pointwise), we have ${\rm Pr} (\Gamma, f) \le {\rm Pr} (\Gamma, g)$. And ${\rm Pr} (\Gamma, f + g) \leq {\rm Pr} (\Gamma, f) + {\rm Pr} (\Gamma, g)$. 
\end{property} 

Since $f \le g$, we have $\mathscr{S}^{\Gamma}_{\nu} (f, \epsilon) \le \mathscr{S}^{\Gamma}_{\nu} (g, \epsilon)$; or equivalently $\mathscr{R}^{\Gamma}_{\nu} (f, \epsilon) \le \mathscr{R}^{\Gamma}_{\nu} (g, \epsilon)$. Thus, ${\rm Pr} (\Gamma, f) \le {\rm Pr} (\Gamma, g)$. Further, $\mathscr{S}^{\Gamma}_{\nu} (f + g, \epsilon) \le \mathscr{S}^{\Gamma}_{\nu} (f, \epsilon) + \mathscr{S}^{\Gamma}_{\nu} (g, \epsilon)$ and hence, one obtains ${\rm Pr} (\Gamma, f + g) \leq {\rm Pr} (\Gamma, f) + {\rm Pr} (\Gamma, g)$. 

\begin{property} 
${\rm Pr} (\Gamma, f) \equiv \infty$ for all $f \in \mathcal{C} (\overline{\mathbb{C}}, \mathbb{R})$ or is finite-valued for all $f \in \mathcal{C} (\overline{\mathbb{C}}, \mathbb{R})$. Further, if the pressure is finite-valued then, it is a convex function that satisfies 
\[ \left| {\rm Pr} (\Gamma, f) - {\rm Pr} (\Gamma, g) \right| \leq \left\| f - g \right\|_{\infty}. \] 
\end{property} 

It is obviously true that $e^{\nu \inf f} \mathscr{S}^{\Gamma}_{\nu} (0, \epsilon) \leq \mathscr{S}^{\Gamma}_{\nu} (f, \epsilon) \leq e^{\nu \sup f} \mathscr{S}^{\Gamma}_{\nu} (0, \epsilon)$. This, in turn implies $h_{{\rm top}} (\Gamma) + \inf f \leq {\rm Pr} (\Gamma, f) \leq h_{{\rm top}} (\Gamma) + \sup f$. It is then easy to see that ${\rm Pr} (\Gamma, f)$ is equal to $\infty$ {\it iff} $h_{{\rm top}} (\Gamma) = \infty$. 

Moreover when the pressure is finite-valued, we have for any two functions $f, g \in \mathcal{C} (\overline{\mathbb{C}}, \mathbb{R})$, some $(\nu, \epsilon)$-separated set $\mathcal{F}$ in $\mathscr{P}_{\nu}^{\Gamma} (\overline{\mathbb{C}})$ and $0 \le t \le 1$, 
\[ \sum_{\mathfrak{X}_{\nu}^{+} (x, \boldsymbol{\alpha})\, \in\, \mathcal{F}} e^{\mathcal{Z}_{\nu}^{+} \left( tf + (1 - t)g \right) \left( \mathfrak{X}_{\nu}^{+} (x, \boldsymbol{\alpha}) \right)}\ \le\ \left( \sum_{\mathfrak{X}_{\nu}^{+} (x, \boldsymbol{\alpha})\, \in\, \mathcal{F}} e^{\mathcal{Z}_{\nu}^{+} \left( f \right) \left( \mathfrak{X}_{\nu}^{+} (x, \boldsymbol{\alpha}) \right)} \right)^{t} \left( \sum_{\mathfrak{X}_{\nu}^{+} (x, \boldsymbol{\alpha})\, \in\, \mathcal{F}} e^{\mathcal{Z}_{\nu}^{+} \left( g \right) \left( \mathfrak{X}_{\nu}^{+} (x, \boldsymbol{\alpha}) \right)} \right)^{1 - t} \] 
that in turn results in ${\rm Pr} (\Gamma, tf + (1 - t)g) \le t {\rm Pr} (\Gamma, f) + (1 - t) {\rm Pr} (\Gamma, g)$. Also, 
\begin{eqnarray*} 
\frac{\sup\limits_{\mathcal{F}} \sum\limits_{\mathfrak{X}_{\nu}^{+} (x, \boldsymbol{\alpha})\, \in\, \mathcal{F}} e^{\mathcal{Z}_{\nu}^{+} \left( f \right) \left( \mathfrak{X}_{\nu}^{+} (x, \boldsymbol{\alpha}) \right)}}{\sup\limits_{\mathcal{F}} \sum\limits_{\mathfrak{X}_{\nu}^{+} (x, \boldsymbol{\alpha})\, \in\, \mathcal{F}} e^{\mathcal{Z}_{\nu}^{+} \left( g \right) \left( \mathfrak{X}_{\nu}^{+} (x, \boldsymbol{\alpha}) \right)}} & \le & \sup\limits_{\mathcal{F}} \left\{ \frac{\sum\limits_{\mathfrak{X}_{\nu}^{+} (x, \boldsymbol{\alpha})\, \in\, \mathcal{F}} e^{\mathcal{Z}_{\nu}^{+} \left( f \right) \left( \mathfrak{X}_{\nu}^{+} (x, \boldsymbol{\alpha}) \right)}}{\sum\limits_{\mathfrak{X}_{\nu}^{+} (x, \boldsymbol{\alpha})\, \in\, \mathcal{F}} e^{\mathcal{Z}_{\nu}^{+} \left( g \right) \left( \mathfrak{X}_{\nu}^{+} (x, \boldsymbol{\alpha}) \right)}} \right\} \\ 
& \le & \sup\limits_{\mathcal{F}} \left\{ \max\limits_{\mathfrak{X}_{\nu}^{+} (x, \boldsymbol{\alpha})} \left\{ \frac{e^{\mathcal{Z}_{\nu}^{+} \left( f \right) \left( \mathfrak{X}_{\nu}^{+} (x, \boldsymbol{\alpha}) \right)}}{e^{\mathcal{Z}_{\nu}^{+} \left( g \right) \left( \mathfrak{X}_{\nu}^{+} (x, \boldsymbol{\alpha}) \right)}} \right\} \right\} \\ 
& \le & e^{\nu \| f - g \|_{\infty}}. 
\end{eqnarray*}
Thus, $\left| {\rm Pr} (\Gamma, f) - {\rm Pr} (\Gamma, g) \right| \leq \left\| f - g \right\|_{\infty}$. 

\begin{property} 
For any $c \in \mathbb{R}$, we have ${\rm Pr} (\Gamma, f + c) = {\rm Pr} (\Gamma, f) + c$ while ${\rm Pr} (\Gamma, cf) \leq c {\rm Pr} (\Gamma, f)$ when $c \geq 1$ and ${\rm Pr} (\Gamma, cf) \geq c {\rm Pr} (\Gamma, f)$ if $c \leq 1$.
\end{property} 

\section{Variational Principle}

Recall the definition of the set $\mathscr{P}^{\Gamma} \left( \overline{\mathbb{C}} \right)$ that contains all infinite paths from Equation \eqref{infiniteforwardpath} in Section \eqref{ppi}. We begin this section by defining a metric on $\mathscr{P}^{\Gamma} \left( \overline{\mathbb{C}} \right)$, as follows: For any two points $\mathfrak{X}^{+} \left( x_{0}; \boldsymbol{\alpha} \right) = \left( x_{0}, x_{1}, x_{2}, \cdots; \alpha_{1}, \alpha_{2}, \cdots \right)$ and $\mathfrak{X}^{+} \left( y_{0}; \boldsymbol{\beta} \right) = \left( y_{0}, y_{1}, y_{2}, \cdots; \beta_{1}, \beta_{2}, \cdots \right)$, define 
\begin{equation} 
\label{metriconinfspace} 
d_{\mathscr{P}^{\Gamma} \left( \overline{\mathbb{C}} \right)} \left( \mathfrak{X}^{+} \left( x_{0}; \boldsymbol{\alpha} \right), \mathfrak{X}^{+} \left( y_{0}; \boldsymbol{\beta} \right) \right)\ \ =\ \ \max \left\{ \sup_{\nu\, \in\, \mathbb{Z}_{+}} \frac{1}{2^{\nu}} d_{\overline{\mathbb{C}}} \left( x_{\nu}, y_{\nu} \right),\ \sup_{\nu\, \in\, \mathbb{Z}_{+}} \frac{1}{2^{\nu}} \left( 1 - \delta_{\alpha_{\nu + 1}, \beta_{\nu + 1}} \right) \right\}, 
\end{equation} 
where $\delta_{i, j}$ is the Kronecker delta function that assigns the value $1$ if $i = j$ and $0$, otherwise. In the topology dictated by the metric, one can see that $\mathscr{P}^{\Gamma} \left( \overline{\mathbb{C}} \right)$ is a compact metric space. Also consider the shift map $\sigma : \mathscr{P}^{\Gamma} \left( \overline{\mathbb{C}} \right) \longrightarrow \mathscr{P}^{\Gamma} \left( \overline{\mathbb{C}} \right)$ given by 
\begin{equation}
\label{shiftmap} 
\sigma \left( \mathfrak{X}^{+} \left( x_{0}; \boldsymbol{\alpha} \right) \right)\ \ =\ \ \sigma \left( \left( x_{0}, x_{1}, x_{2}, \cdots; \alpha_{1}, \alpha_{2}, \cdots \right) \right)\ \ =\ \ \left( x_{1}, x_{2}, x_{3}, \cdots; \alpha_{2}, \alpha_{3}, \cdots \right) 
\end{equation}

Since $\sigma$ is a continuous transformation defined on the compact metric space $\mathscr{P}^{\Gamma} \left( \overline{\mathbb{C}} \right)$, one can now write the definition of the topological pressure of a continuous function $F \in \mathcal{C} \left( \mathscr{P}^{\Gamma} \left( \overline{\mathbb{C}} \right), \mathbb{R} \right)$ with respect to the shift map $\sigma$, in accordance to Bowen, \cite{pw:1982}. 

\begin{definition} 
For any $\nu \in \mathbb{Z}_{+}$ and $\epsilon > 0$, we say that $\mathcal{F} \subseteq \mathscr{P}^{\Gamma} \left( \overline{\mathbb{C}} \right)$ is a $(\nu, \epsilon)$-separated family with respect to the shift map $\sigma$ if for any two points $\mathfrak{X}^{+} \left( x_{0}; \boldsymbol{\alpha} \right)$ and $\mathfrak{X}^{+} \left( y_{0}; \boldsymbol{\beta} \right)$, we have 
\[ \max\limits_{0\, \le\, j\, \le\, \nu -1} \left\{ d_{\mathscr{P}^{\Gamma} \left( \overline{\mathbb{C}} \right)} \left( \sigma^{j} \left( \mathfrak{X}^{+} \left( x_{0}; \boldsymbol{\alpha} \right) \right), \sigma^{j} \left( \mathfrak{X}^{+} \left( y_{0}; \boldsymbol{\beta} \right) \right) \right) \right\} > \epsilon. \] 
Then, for any $F \in \mathcal{C} \left( \mathscr{P}^{\Gamma} \left( \overline{\mathbb{C}} \right), \mathbb{R} \right)$, its topological pressure with respect to the map $\sigma$ is given by 
\begin{equation} 
\label{classical pressure} 
{\rm Pr}_{\sigma} (F)\ \ =\ \ \lim_{\epsilon\, \to\, 0}\; \limsup_{\nu\, \to\, \infty}\; \frac{1}{\nu}\; \log \left( \sup_{\mathcal{F}} \left\{ \sum_{\mathfrak{X}^{+} \left( x_{0}; \boldsymbol{\alpha} \right)\, \in\, \mathcal{F}} e^{F_{\nu} \left( \mathfrak{X}^{+} \left( x_{0}; \boldsymbol{\alpha} \right) \right)} \right\} \right), 
\end{equation} 
where the supremum is taken over all $(\nu, \epsilon)$-separated family $\mathcal{F}$ in $\mathscr{P}^{\Gamma} \left( \overline{\mathbb{C}} \right)$. Further, the quantity $F_{\nu} \left( \mathfrak{X}^{+} \left( x_{0}; \boldsymbol{\alpha} \right) \right)$ is the $\nu^{{\rm th}}$ order ergodic sum of $F$ evaluated at the point $\mathfrak{X}^{+} \left( x_{0}; \boldsymbol{\alpha} \right) \in \mathscr{P}^{\Gamma} \left( \overline{\mathbb{C}} \right)$ under the action of $\sigma$, as in Equation \eqref{ergsumT} and is given by 
\begin{equation} 
\label{ergodsum} 
F_{\nu} \left( \mathfrak{X}^{+} \left( x_{0}; \boldsymbol{\alpha} \right) \right)\ \ =\ \ \sum\limits_{0\, \le\, j\, \le\, \nu -1} F \left( \sigma^{j} \left( \mathfrak{X}^{+} \left( x_{0}; \boldsymbol{\alpha} \right) \right) \right). 
\end{equation} 
\end{definition} 

The next theorem relates the topological pressure of a continuous function defined on $\overline{\mathbb{C}}$ with respect to the holomorphic correspondence $\Gamma$ to its counterpart defined on $\mathscr{P}^{\Gamma} \left( \overline{\mathbb{C}} \right)$ with respect to the shift map $\sigma$. 

\begin{theorem}
\label{variational pre}
For every continuous function $f \in \mathcal{C} \left( \overline{\mathbb{C}}, \mathbb{R} \right)$, we have ${\rm Pr} (\Gamma, f) \equiv {\rm Pr}_{\sigma} (f \circ \Pi_{0}^{+})$, where $\Pi_{0}^{+}$ is the projection map, as defined in Section \eqref{ppi}. 
\end{theorem}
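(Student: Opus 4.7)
The plan is to transfer $(\nu, \epsilon)$-separated families back and forth between the two settings, using that under the identification $F := f \circ \Pi_0^+$, the two notions of ergodic sum agree on truncations. Indeed, for any $\mathfrak{X}^+(x_0; \boldsymbol{\alpha}) \in \mathscr{P}^\Gamma(\overline{\mathbb{C}})$ one has $F(\sigma^j \mathfrak{X}^+(x_0; \boldsymbol{\alpha})) = f(x_j)$, hence
\[ F_\nu\left( \mathfrak{X}^+(x_0; \boldsymbol{\alpha}) \right)\ \ =\ \ \sum_{0\, \le\, j\, \le\, \nu - 1} f(x_j)\ \ =\ \ \mathcal{Z}_\nu^+(f)\left( \mathfrak{X}_\nu^+(x_0; \boldsymbol{\alpha}) \right), \]
so both pressures are built from the same exponential sums and only the notion of $(\nu, \epsilon)$-separation varies between the two sides.

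For the inequality ${\rm Pr}(\Gamma, f) \le {\rm Pr}_\sigma(f \circ \Pi_0^+)$, I would begin with a $(\nu, \epsilon)$-separated family $\mathcal{F} \subseteq \mathscr{P}_\nu^\Gamma(\overline{\mathbb{C}})$ and lift each finite orbit to an infinite one, which is possible because the surjectivity of $\pi_2|_{\Gamma_j}$ in the definition of a holomorphic correspondence guarantees infinite forward extensions. The lifted family $\tilde{\mathcal{F}} \subset \mathscr{P}^\Gamma(\overline{\mathbb{C}})$ remains separated under $\sigma$ with only a factor-$2$ loss in $\epsilon$: any coordinate disagreement $d_{\overline{\mathbb{C}}}(x_k, y_k) > \epsilon$ at some $0 \le k \le \nu$ is witnessed by the Bowen metric from Equation \eqref{metriconinfspace} at shift $j := \min\{k, \nu - 1\}$ with weight $2^{-|k - j|} \ge 1/2$, and analogously for symbolic disagreements. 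Since the exponential sums match exactly, one obtains $\mathscr{R}_\nu^\Gamma(f, \epsilon) \le \sup \sum_{\mathfrak{X}^+\, \in\, \tilde{\mathcal{F}}} e^{F_\nu(\mathfrak{X}^+)}$, the supremum being over $(\nu, \epsilon/2)$-Bowen-separated families. Passing to $\frac{1}{\nu} \log$, $\limsup_{\nu \to \infty}$, and $\epsilon \to 0$, Theorem \eqref{Pressure charac theorem} yields the inequality.

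For the reverse inequality, let $D := {\rm diam}\, \overline{\mathbb{C}}$ and set $M = M(\epsilon) := \lceil \log_2(D/\epsilon) \rceil + 1$. Given a $(\nu, \epsilon)$-Bowen-separated family $\tilde{\mathcal{F}} \subset \mathscr{P}^\Gamma(\overline{\mathbb{C}})$, any disagreement $d(\sigma^j \mathfrak{X}^+, \sigma^j \mathfrak{Y}^+) > \epsilon$ with $0 \le j \le \nu - 1$ must be realised by a coordinate $k = j + \mu$ satisfying $2^{-\mu} > \epsilon/D$, forcing $\mu \le M$ and thus $k \le \nu + M - 1$. Hence the truncation map $\rho : \tilde{\mathcal{F}} \to \mathscr{P}_{\nu + M - 1}^\Gamma(\overline{\mathbb{C}})$ is injective with image a $(\nu + M - 1, \epsilon)$-separated family in the sense of Definition \eqref{separated}. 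The ergodic sums differ by the contribution of at most $M - 1$ extra terms, bounded by $(M - 1) \|f\|_\infty$, and this constant pulls out as a factor $e^{(M - 1) \|f\|_\infty}$ that is independent of $\nu$ and therefore washes out after taking $\frac{1}{\nu'} \log$, $\limsup_{\nu' \to \infty}$ with $\nu' := \nu + M - 1$. This yields ${\rm Pr}_\sigma(f \circ \Pi_0^+) \le {\rm Pr}(\Gamma, f)$ and completes the proof.

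The main obstacle is a careful bookkeeping of the loss factors when comparing the Bowen metric on infinite orbits with the pointwise-coordinate metric on truncated orbits. Specifically, one must verify that the exponential weights $2^{-\mu}$ in Equation \eqref{metriconinfspace} allow only $O(\log(1/\epsilon))$ additional coordinates beyond the first $\nu + 1$ to contribute to separation, so that the shift from $\nu$ to $\nu + M(\epsilon) - 1$ in the truncation step produces no exponential-in-$\nu$ multiplicative error. Once this uniform control is established, the two inequalities combine to yield the theorem.
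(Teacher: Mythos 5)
Your proof follows the paper's own argument: lift $(\nu, \epsilon)$-separated families back and forth between $\mathscr{P}_{\nu}^{\Gamma}\left(\overline{\mathbb{C}}\right)$ and $\mathscr{P}^{\Gamma}\left(\overline{\mathbb{C}}\right)$, match the exponential weights via $\mathcal{Z}_{\nu}^{+}(f) = F_{\nu}$ on truncations, and absorb the length shift $\nu \mapsto \nu + N(\epsilon)$ (your $\nu + M - 1$) into contributions that vanish after taking $\tfrac{1}{\nu}\log$ and passing to the limit. You are in fact somewhat more careful than the paper about the loss factors --- the factor-$2$ in $\epsilon$ arising from the weight $2^{-1}$ at coordinate $k = \nu$ in the Bowen metric, and the $e^{(M-1)\|f\|_{\infty}}$ correction from the mismatch between $F_{\nu}$ and $F_{\nu + M - 1}$, both of which the paper elides --- so your version is sound; the one small slip is that the forward extension of a finite orbit to an infinite one relies on the surjectivity of $\pi_{1}\vert_{\Gamma_{j}}$, not $\pi_{2}\vert_{\Gamma_{j}}$.
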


\begin{proof}
For any function $f \in  \mathcal{C} \left( \overline{\mathbb{C}}, \mathbb{R} \right)$, it is obvious that $F = f \circ \Pi_{0}^{+} \in \mathcal{C} \left( \mathscr{P}^{\Gamma} \left( \overline{\mathbb{C}} \right), \mathbb{R} \right)$. Consider any point $\mathfrak{X}_{\nu}^{+} \left( x_{0}; \boldsymbol{\alpha}^{\prime} \right) \in \mathscr{P}_{\nu} ^{\Gamma} (\overline{\mathbb{C}})$. Then, there exists a corresponding point (not necessarily unique) $\mathfrak{X}^{+} \left( x_{0}; \boldsymbol{\alpha} \right) \in \mathscr{P}^{\Gamma} \left( \overline{\mathbb{C}} \right)$ such that $\Pi_{j}^{+} \left( \mathfrak{X}^{+} \left( x_{0}; \boldsymbol{\alpha} \right) \right) = \Pi_{j}^{+} \left( \mathfrak{X}_{\nu}^{+} \left( x_{0}; \boldsymbol{\alpha}^{\prime} \right) \right)$ for $0 \le j \le \nu$ and 
${\rm proj}_{j}^{+} \left( \mathfrak{X}^{+} \left( x_{0}; \boldsymbol{\alpha} \right) \right) = {\rm proj}_{j}^{+} \left( \mathfrak{X}_{\nu}^{+} \left( x_{0}; \boldsymbol{\alpha}^{\prime} \right) \right)$ for $1 \le j \le \nu$. Moreover, $\mathcal{Z}_{\nu}^{+} (f) \left( \mathfrak{X}_{\nu}^{+} \left( x_{0}; \boldsymbol{\alpha}^{\prime} \right) \right) = F_{\nu} \left( \mathfrak{X}_{\nu}^{+} \left( x_{0}; \boldsymbol{\alpha} \right) \right)$ for all $\nu \in \mathbb{Z}_{+}$. 

Now fix $\epsilon > 0$ and $\nu \in \mathbb{Z}_{+}$. Recall from Equation \eqref{separated nu eps} that 
\[ \mathscr{R}^{\Gamma}_{\nu} \left( f, \epsilon \right) = \sup \left\{ \sum_{\mathfrak{X}_{\nu}^{+} \left( x; \boldsymbol{\alpha} \right)\, \in\, \mathcal{F}} e^{ \mathcal{Z}_{\nu}^{+} (f) \left( \mathfrak{X}_{\nu}^{+} \left( x; \boldsymbol{\alpha} \right) \right)}\ \mid\ \mathcal{F}\ \text{is a}\ (\nu, \epsilon)\text{-separated family of}\ \mathscr{P}_{\nu}^{\Gamma} \left( \overline{\mathbb{C}} \right) \right\}. \] 
Exploiting the same idea, one can then define 
\[ R_{\nu}^{\sigma} \left( F, \epsilon \right) = \sup \left\{ \sum_{\mathfrak{X}^{+} \left( x; \boldsymbol{\alpha} \right)\, \in\, \mathcal{F}} e^{ F_{\nu} \left( \mathfrak{X}^{+} \left( x; \boldsymbol{\alpha} \right) \right)}\ \mid\ \mathcal{F}\ \text{is a}\ (\nu, \epsilon)\text{-separated family of}\ \mathscr{P}^{\Gamma} \left( \overline{\mathbb{C}} \right) \right\}. \] 

Let $\mathfrak{X}_{\nu}^{+} \left( x_{0}; \boldsymbol{\alpha}^{\prime} \right), \mathfrak{X}_{\nu}^{+} \left( y_{0}; \boldsymbol{\beta}^{\prime} \right) \in \mathcal{F}$, a $(\nu, \epsilon)$ separated subset of $\mathscr{P}_{\nu} ^{\Gamma} (\overline{\mathbb{C}})$. Then, we know that there exists corresponding points (not necessarily unique) $\mathfrak{X}^{+} \left( x_{0}; \boldsymbol{\alpha} \right), \mathfrak{X}^{+} \left( y_{0}; \boldsymbol{\beta} \right) \in \mathscr{P}^{\Gamma} \left( \overline{\mathbb{C}} \right)$ such that either $d_{\overline{\mathbb{C}}} \left( x_{j}, y_{j} \right) > \epsilon$ for some $0 \le j \le \nu$ or $\alpha_{j} \ne \beta_{j}$ for some $1 \le j \le \nu$. Thus, 
\[ \max\limits_{0\, \le\, j\, \le\, \nu - 1} \left\{ d_{\mathscr{P}^{\Gamma} \left( \overline{\mathbb{C}} \right)} \left( \sigma^{j} \left( \mathfrak{X}^{+} \left( x_{0}; \boldsymbol{\alpha} \right) \right),\; \sigma^{j} \left( \mathfrak{X}^{+} \left( y_{0}; \boldsymbol{\beta} \right) \right) \right) \right\}\ \ \ge\ \ \epsilon, \] 
meaning the points $\mathfrak{X}^{+} \left( x_{0}; \boldsymbol{\alpha} \right)$ and $\mathfrak{X}^{+} \left( y_{0}; \boldsymbol{\beta} \right)$ belong to some $(\nu, \epsilon)$-separated set of $\mathscr{P}^{\Gamma} \left( \overline{\mathbb{C}} \right)$. Moreover, since $\mathcal{Z}_{\nu}^{+} (f) \left( \mathfrak{X}_{\nu}^{+} \left( x_{0}; \boldsymbol{\alpha}^{\prime} \right) \right) = F_{\nu} \left( \mathfrak{X}_{\nu}^{+} \left( x_{0}; \boldsymbol{\alpha} \right) \right)$ for all $\nu \in \mathbb{Z}_{+}$, we have the inequality  $\mathscr{R}_{\nu}^{\Gamma} \left( f, \epsilon \right) \le R_{\nu}^{\sigma} \left( F, \epsilon \right)$ for all $\nu \in \mathbb{Z}_{+}$. 

To prove the other way inequality, we again fix an $\epsilon > 0$ and $\nu \in \mathbb{Z}_{+}$. Let $\mathcal{F}$ be a $(\nu, \epsilon)$-separated set of $\mathscr{P}^{\Gamma} \left( \overline{\mathbb{C}} \right)$. Then, there exists a $N(\epsilon) \in \mathbb{Z}_{+}$ such that $\dfrac{1}{2^{m}} < \epsilon$ for all $m > N(\epsilon)$. For any two elements say, $\mathfrak{X}^{+} \left( x_{0}; \boldsymbol{\alpha} \right), \mathfrak{X}^{+} \left( y_{0}; \boldsymbol{\beta} \right) \in \mathcal{F} \subseteq \mathscr{P}^{\Gamma} \left( \overline{\mathbb{C}} \right)$, we have 
\begin{equation} 
\label{max} 
\max_{0\, \le\, j\, \le\, \nu - 1} d_{\mathscr{P}^{\Gamma} \left( \overline{\mathbb{C}} \right)} \left( \sigma^{j} \left( \mathfrak{X}^{+} \left( x_{0}; \boldsymbol{\alpha} \right) \right), \sigma^{j} \left( \mathfrak{X}^{+} \left( y_{0}; \boldsymbol{\beta} \right) \right) \right)\ \ >\ \ \epsilon. 
\end{equation} 
Equation \eqref{max} in turn implies at least one of the following statements to be true. 
\begin{enumerate}
\item Either $d_{\overline{\mathbb{C}}} \left( x_{j}, y_{j} \right) > \epsilon$ for some $0 \le j \le \nu - 1 + N(\epsilon)$, 
\item Or $\alpha_{j} \ne \beta_{j}$ for some $1 \le j \le \nu - 1 + N(\epsilon)$. 
\end{enumerate}
Hence, truncating the points $\mathfrak{X}^{+} \left( x_{0}; \boldsymbol{\alpha} \right)$ and $\mathfrak{X}^{+} \left( y_{0}; \boldsymbol{\beta} \right)$ that describes the infinitely long trajectories that start at $x_{0}$ and $y_{0}$ respectively, as necessary, we can find $\mathfrak{X}_{\nu + N(\epsilon)}^{+} \left( x_{0}; \boldsymbol{\alpha}^{\prime} \right)$ and $\mathfrak{X}_{\nu + N(\epsilon)}^{+} \left( y_{0}; \boldsymbol{\beta}^{\prime} \right) \in \mathscr{P}_{\nu + N(\epsilon)}^{\Gamma} \left( \overline{\mathbb{C}} \right)$, {\it i.e.}, the points $\mathfrak{X}_{\nu + N(\epsilon)}^{+} \left( x_{0}; \boldsymbol{\alpha}^{\prime} \right)$ and $\mathfrak{X}_{\nu + N(\epsilon)}^{+} \left( y_{0}; \boldsymbol{\beta}^{\prime} \right)$ are precisely those for which 
\begin{displaymath} 
\begin{array}{r c l l} 
\Pi_{j}^{+} \left( \mathfrak{X}^{+} \left( x_{0}; \boldsymbol{\alpha} \right) \right) & \equiv & \Pi_{j}^{+} \left( \mathfrak{X}_{\nu + N(\epsilon)}^{+} \left( x_{0}; \boldsymbol{\alpha}^{\prime} \right) \right) & \text{and} \\ 
\Pi_{j}^{+} \left( \mathfrak{X}^{+} \left( y_{0}; \boldsymbol{\beta} \right) \right) & \equiv & \Pi_{j}^{+} \left( \mathfrak{X}_{\nu + N(\epsilon)}^{+} \left( y_{0}; \boldsymbol{\beta}^{\prime} \right) \right), & \text{for}\ 0 \le j \le \nu + N(\epsilon) - 1, \\ 
\\ 
{\rm proj}_{j}^{+} \left( \mathfrak{X}^{+} \left( x_{0}; \boldsymbol{\alpha} \right) \right) & \equiv & {\rm proj}_{j}^{+} \left( \mathfrak{X}_{\nu + N(\epsilon)}^{+} \left( x_{0}; \boldsymbol{\alpha}^{\prime} \right) \right) & \text{and} \\ 
{\rm proj}_{j}^{+} \left( \mathfrak{X}^{+} \left( y_{0}; \boldsymbol{\beta} \right) \right) & \equiv & {\rm proj}_{j}^{+} \left( \mathfrak{X}_{\nu + N(\epsilon)}^{+} \left( y_{0}; \boldsymbol{\beta}^{\prime} \right) \right), & \text{for}\ 1 \le j \le \nu + N(\epsilon) - 1. 
\end{array} 
\end{displaymath} 
In other words, the points $\mathfrak{X}_{\nu + N(\epsilon)}^{+} \left( x_{0}; \boldsymbol{\alpha}^{\prime} \right)$ and $\mathfrak{X}_{\nu + N(\epsilon)}^{+} \left( y_{0}; \boldsymbol{\beta}^{\prime} \right)$ belong to some $(\nu + N(\epsilon), \epsilon)$-separated family in $\mathscr{P}_{\nu + N(\epsilon)}^{\Gamma} \left( \overline{\mathbb{C}} \right)$. Thus, $\mathscr{R}_{\nu + N(\epsilon)}^{\Gamma} \left( f, \epsilon \right) \ge R_{\nu + N(\epsilon)}^{\sigma} \left( F, \epsilon \right)$ for all $\nu \in \mathbb{Z}_{+}$. 

Since the definition of the pressure of $f$, as stated in Theorem \eqref{Pressure charac theorem} and the pressure of $F$, as stated in Equation \eqref{classical pressure} involves the limiting values of $\mathscr{R}_{\nu}^{\Gamma} \left( f, \epsilon \right)$ and $R_{\nu}^{\sigma} \left( F, \epsilon \right)$, as $\nu \to \infty$, we conclude that ${\rm Pr} (\Gamma, f) = {\rm Pr}_{\sigma} (F)$ where $F = f \circ \Pi_{0}^{+}$. 
\end{proof} 

As a corollary to Theorem \eqref{variational pre}, one can write the variational principle for a holomorphic correspondence as follows: 

\begin{corollary} (Variational principle for a holomorphic correspondence)
\label{varprin}
Let $\Gamma$ be the holomorphic correspondence defined on the Riemann sphere $\overline{\mathbb{C}}$, as written in Definition \eqref{correspondence}. Then, for any $f \in \mathcal{C} \left( \overline{\mathbb{C}}, \mathbb{R} \right)$, we have 
\[ {\rm Pr} (\Gamma, f)\ =\ \sup_{\mu} \left\{ h_{\mu} (\sigma) + \int F \mathrm{d}\mu \right\}\ =\ \sup_{\mu} \left\{ h_{\mu} (\sigma) + \int f \circ \Pi_{0}^{+} \mathrm{d}\mu \right\}, \] 
where the supremum is taken over all $\sigma$-invariant probability measures $\mu$ supported on $\mathscr{P}^{\Gamma} \left( \overline{\mathbb{C}} \right)$ and $h_{\mu} (\sigma)$ is the measure-theoretic entropy of the map $\sigma$, associated to the measure $\mu$. 
\end{corollary}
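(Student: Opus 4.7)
The plan is to reduce the statement directly to the classical Bowen--Walters variational principle for continuous self-maps on compact metric spaces, using Theorem \eqref{variational pre} as the bridge. Concretely, Theorem \eqref{variational pre} supplies ${\rm Pr}(\Gamma, f) = {\rm Pr}_{\sigma}(F)$ with $F = f \circ \Pi_0^+$, so the task reduces to exhibiting the right-hand supremum as the topological pressure of $F$ with respect to the shift map $\sigma$ on $\mathscr{P}^{\Gamma}(\overline{\mathbb{C}})$.

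First I would verify the hypotheses of the classical variational principle: the space $\left( \mathscr{P}^{\Gamma}(\overline{\mathbb{C}}),\, d_{\mathscr{P}^{\Gamma}(\overline{\mathbb{C}})} \right)$ is a compact metric space, as noted in the paragraph following Equation \eqref{metriconinfspace}; the shift map $\sigma$ from Equation \eqref{shiftmap} is a continuous self-map, which is immediate from the fact that the metric suppresses discrepancies in the $\nu$-th coordinate by a factor $1/2^{\nu}$; and $F = f \circ \Pi_0^+$ is continuous, since $\Pi_0^+$ is a coordinate projection (hence continuous) and $f \in \mathcal{C}(\overline{\mathbb{C}}, \mathbb{R})$. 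With these prerequisites in place, the classical variational principle yields
\[ {\rm Pr}_{\sigma}(F)\ \ =\ \ \sup_{\mu} \left\{ h_{\mu}(\sigma)\; +\; \int F\, \mathrm{d}\mu \right\}, \]
where the supremum is taken over all $\sigma$-invariant Borel probability measures $\mu$ on $\mathscr{P}^{\Gamma}(\overline{\mathbb{C}})$.

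Combining this with Theorem \eqref{variational pre} delivers the first equality of the corollary, and the second equality is just the substitution $F = f \circ \Pi_0^+$. The only point requiring care is to observe that the definition of ${\rm Pr}_{\sigma}(F)$ given in Equation \eqref{classical pressure} is literally the Bowen--Walters topological pressure, so the classical variational principle applies as a black box; a direct comparison with the standard definition confirms this. No substantive new obstacle appears, because all of the genuinely correspondence-specific combinatorics, namely the bookkeeping of the symbolic component $\boldsymbol{\alpha}$ alongside the spatial coordinates, has already been absorbed into Theorem \eqref{variational pre} when passing from $\Gamma$ acting on $\overline{\mathbb{C}}$ to the honest self-map $\sigma$ on the compact orbit space $\mathscr{P}^{\Gamma}(\overline{\mathbb{C}})$.
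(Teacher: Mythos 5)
Your proposal is correct and follows exactly the route the paper intends: Theorem \eqref{variational pre} reduces ${\rm Pr}(\Gamma,f)$ to ${\rm Pr}_\sigma(F)$, and the classical Bowen--Walters variational principle for the continuous self-map $\sigma$ on the compact metric space $\mathscr{P}^\Gamma(\overline{\mathbb{C}})$ then yields the stated supremum. The paper treats this deduction as immediate; your write-up simply makes the hypothesis-checking explicit.
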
 

Further, we obtain a Theorem stated in \cite{bs:2021} that relates the topological entropy of the holomorphic correspondence $\Gamma$ and the shift map $\sigma$, as a corollary to the variational principle for holomorphic correspondence, as written in Corollary \eqref{varprin}, Theorem \eqref{variational pre} and Property \eqref{pressureofzero}. 

\begin{corollary} 
\[ h_{{\rm top}} (\Gamma)\ \ =\ \ {\rm Pr} (\Gamma, 0)\ \ =\ \ {\rm Pr}_{\sigma} \left( 0 \circ \Pi_{0}^{+} \right)\ \ =\ \ h(\sigma). \]
\end{corollary}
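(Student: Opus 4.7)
The corollary simply chains three identifications, each of which is already almost in hand from results proved earlier in the excerpt, so the plan is to verify each equality in turn.

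For the first equality, $h_{{\rm top}}(\Gamma) = {\rm Pr}(\Gamma, 0)$, I would just invoke Property \eqref{pressureofzero}, which was established by observing that $\mathscr{S}^{\Gamma}_{\nu}(0,\epsilon) = S_{\nu}(\epsilon)$ (part (3) of Lemma \eqref{CON}), so that the spanning-set characterisation of topological entropy collapses out of the definition of ${\rm Pr}(\Gamma, \cdot)$ when applied to the zero function. Nothing else is needed here.

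For the second equality, ${\rm Pr}(\Gamma, 0) = {\rm Pr}_{\sigma}(0 \circ \Pi_{0}^{+})$, I would apply Theorem \eqref{variational pre} directly with $f \equiv 0 \in \mathcal{C}(\overline{\mathbb{C}}, \mathbb{R})$; the theorem's conclusion is stated for an arbitrary continuous $f$, and the zero function is certainly continuous.

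For the third equality, ${\rm Pr}_{\sigma}(0 \circ \Pi_{0}^{+}) = h(\sigma)$, the observation is that $0 \circ \Pi_{0}^{+}$ is identically the zero function on $\mathscr{P}^{\Gamma}(\overline{\mathbb{C}})$, so its ergodic sums $F_{\nu}$ in Equation \eqref{ergodsum} vanish and the pressure formula \eqref{classical pressure} reduces to $\lim_{\epsilon \to 0} \limsup_{\nu \to \infty} \frac{1}{\nu} \log \sup_{\mathcal{F}} \#\mathcal{F}$, where the supremum is over $(\nu, \epsilon)$-separated families in $\mathscr{P}^{\Gamma}(\overline{\mathbb{C}})$ with respect to $\sigma$; this is precisely Bowen's definition of the topological entropy $h(\sigma)$ of the shift map acting on the compact metric space $\mathscr{P}^{\Gamma}(\overline{\mathbb{C}})$. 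Alternatively, one can reach the same conclusion from Corollary \eqref{varprin} by taking $f = 0$ and quoting the classical variational principle $\sup_{\mu} h_{\mu}(\sigma) = h(\sigma)$.

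There is no real obstacle; every step is a direct specialisation to $f \equiv 0$ of the machinery already built up in Sections \eqref{prescont} and the Variational Principle section. The only thing to be careful about is ensuring that the hypotheses of Theorem \eqref{variational pre} apply to the zero function (they do, trivially) and that the reduction of ${\rm Pr}_{\sigma}$ of a zero function to $h(\sigma)$ uses the same topology and metric on $\mathscr{P}^{\Gamma}(\overline{\mathbb{C}})$ as defined in Equation \eqref{metriconinfspace}, under which $\mathscr{P}^{\Gamma}(\overline{\mathbb{C}})$ is compact and $\sigma$ is continuous, so that Bowen's framework is genuinely applicable.
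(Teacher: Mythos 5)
Your proposal follows essentially the same route as the paper: the paper states this corollary as an immediate consequence of Property \eqref{pressureofzero}, Theorem \eqref{variational pre}, and Corollary \eqref{varprin}, which is precisely the chain you invoke (with your alternative derivation of the third equality directly from the Bowen pressure formula \eqref{classical pressure} being an equally valid variant of the same specialisation to $f\equiv 0$). Each step is correctly justified and no gap is present.
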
 

\section{The Ruelle operator} 
\label{trop} 

For a holomorphic correspondence $\Gamma$ on $\overline{\mathbb{C}}$, as stated in Equation \eqref{correspondence}, we define 
\begin{equation} 
\label{done} 
d_{1} (y)\ \ =\ \ \sum_{j\, =\, 1}^{N} m_{j}\; {\rm Card}\left\{ x \in \overline{\mathbb{C}} : (x, y) \in \Gamma_{j} \right\}\ \ \ \text{for any}\ y \in \overline{\mathbb{C}}. 
\end{equation} 
Let $\Gamma$ be a correspondence that satisfies $d_{1} (y) = d_{1}$, for generic points $y \in \overline{\mathbb{C}}$. Observe that this definition of $d_{1}$ extends from what we know as the topological degree for maps, that counts the number of pre-images for generic points. Further, given a holomorphic correspondence $\Gamma$ on $\overline{\mathbb{C}}$, we define its adjoint holomorphic correspondence $^{\dagger}\Gamma$ on $\overline{\mathbb{C}}$ as 
\[ ^{\dagger}\Gamma\ \ =\ \ \sum_{1\, \leq\, j\, \leq\, N} m_{j} ^{\dagger}\Gamma_{j}, \] 
where $^{\dagger}\Gamma_{j} = \left\{ (x, y) \in \overline{\mathbb{C}} \times \overline{\mathbb{C}} : (y, x) \in \Gamma_{j} \right\}$. We define $d_{0}$ for the correspondence $\Gamma$ as $d_{0} = d_{1} (^{\dagger}\Gamma)$. From now on, we work with holomorphic correspondences on $\overline{\mathbb{C}}$ that satisfy $d_{0} < d_{1}$, a necessary condition for the following theorem to hold. 

\begin{theorem} \cite{bs:2016} 
Let $\Gamma$ be a holomorphc correspondence on $\overline{\mathbb{C}}$, as written in Equation \eqref{correspondence} satisfying the condition $d_{1} > d_{0}$. Suppose $x \in \overline{\mathbb{C}}\; \setminus\; \mathcal{E}$, where $\mathcal{E}$ is some polar set. Then, the sequence of measures $\left\{ \dfrac{1}{d_{1}^{\nu}} \sum\limits_{y\, \in\, X\; :\; (y, x)\, \in\, \Gamma^{\circ \nu}}\ \delta_{y} \right\}_{\nu\, \ge\, 1}$ converges (in the weak* topology) to some measure, independent of $x$. 
\end{theorem}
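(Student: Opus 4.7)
The plan is to recast the weak* convergence of the measure sequence as a statement about an iterated pullback operator on continuous functions, and then to exploit the hypothesis $d_{1} > d_{0}$ via potential-theoretic estimates.

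First, I would introduce the dual pullback operator $\Lambda : \mathcal{C}(\overline{\mathbb{C}}, \mathbb{R}) \longrightarrow \mathcal{C}(\overline{\mathbb{C}}, \mathbb{R})$ defined by
\[ (\Lambda \phi)(x)\ \ =\ \ \frac{1}{d_{1}} \sum_{j\, =\, 1}^{N} m_{j} \sum_{y\, :\, (y, x)\, \in\, \Gamma_{j}}\ \phi(y), \]
with the inner sum counted according to the local multiplicity of $\pi_{2}|_{\Gamma_{j}}$ at $(y,x)$. A direct computation using the definition of the composition $\Gamma^{\circ \nu}$ shows that, for every $\nu \in \mathbb{Z}_{+}$ and every $\phi \in \mathcal{C}(\overline{\mathbb{C}}, \mathbb{R})$,
\[ (\Lambda^{\nu} \phi)(x)\ \ =\ \ \int \phi\ \mathrm{d}\mu_{x}^{\nu}, \quad \text{where}\ \ \mu_{x}^{\nu}\ \ :=\ \ \frac{1}{d_{1}^{\nu}} \sum_{y\, :\, (y, x)\, \in\, \Gamma^{\circ \nu}}\ \delta_{y}. \]
Consequently, the theorem reduces to showing that $(\Lambda^{\nu} \phi)(x) \to c(\phi)$ for a constant $c(\phi)$, pointwise in $x$ off some polar exceptional set $\mathcal{E}$ and for every $\phi \in \mathcal{C}(\overline{\mathbb{C}}, \mathbb{R})$.

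Second, by the Stone--Weierstrass theorem combined with the fact that every smooth function on $\overline{\mathbb{C}}$ can be written as a difference of two subharmonic functions, it suffices to prove this convergence when $\phi = u$ is subharmonic. For such $u$, the function $\Lambda u$ is again subharmonic modulo a bounded term coming from the ramification loci of the $\Gamma_{j}$'s, and the family $\{\Lambda^{\nu} u\}_{\nu}$ is uniformly bounded in $L^{1}(\overline{\mathbb{C}})$, hence relatively compact in the $L^{1}$-topology on subharmonic functions.

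Third, the crux is a potential-theoretic contraction estimate for $\Lambda$. Using the pullback--pushforward duality of the correspondence applied to the current $\partial \overline{\partial} u$, together with the identity relating the mass gains under $\Gamma^{*}$ and $\Gamma_{*}$ to $d_{0}$ and $d_{1}$ respectively, one checks that the Laplacians of $\Lambda^{\nu} u$ are obtained from that of $u$ by applying an operator whose total mass contracts by the factor $d_{0}/d_{1} < 1$. Iterating yields a Cauchy sequence of positive measures converging to a fixed measure $\mu_{\Gamma}$ on $\overline{\mathbb{C}}$, independent of $u$, which is precisely the Dinh--Sibony measure. Standard potential-theoretic arguments then recover the convergence of $\Lambda^{\nu} u$ itself, modulo additive constants, to a potential of $\mu_{\Gamma}$; the additive constants are pinned down by integration against a fixed reference potential. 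The polar exceptional set $\mathcal{E}$ arises as the $-\infty$-locus of such a limiting potential, outside which $(\Lambda^{\nu} \phi)(x) \to \int \phi\ \mathrm{d}\mu_{\Gamma}$ for every $\phi \in \mathcal{C}(\overline{\mathbb{C}}, \mathbb{R})$.

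The principal obstacle I anticipate is the contraction estimate in step three: one has to carefully track how the multiplicities $m_{j}$, the local degrees of $\pi_{1}|_{\Gamma_{j}}$ and $\pi_{2}|_{\Gamma_{j}}$, and the ramification introduced by the composition interact in order to extract the clean contraction factor $d_{0}/d_{1}$, and then to control the error terms uniformly enough that the Cauchy property genuinely holds in the weak* topology of measures. This is the step in which the hypothesis $d_{1} > d_{0}$ and the adjoint correspondence $^{\dagger}\Gamma$ enter the argument decisively.
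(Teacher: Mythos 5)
This theorem is quoted in the paper with the attribution \cite{bs:2016}; the paper itself supplies no proof, so there is no internal argument to compare your proposal against. The potential-theoretic route you sketch is, however, essentially the published one (Dinh's $dd^c$-method, see \cite{d:2005, ds:2006}, adapted to $\overline{\mathbb{C}}$ in \cite{bs:2016}): one identifies the normalised preimage measures with the adjoint of a transfer operator, passes through quasi-potentials, and the hypothesis $d_{1}>d_{0}$ enters exactly through the fact that pulling back by $\Gamma$ costs a factor $d_{1}$ in the normalisation while the associated push-forward on potentials involves a sum over only $d_{0}$ forward branches, giving a geometric contraction of ratio $d_{0}/d_{1}<1$.

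That said, there are genuine gaps in your write-up that go beyond polish. First, the reduction in your second step is stated incorrectly: on the compact surface $\overline{\mathbb{C}}$ there are no non-constant global subharmonic functions, so ``every smooth function is a difference of two subharmonic functions'' fails. The correct device is to work with $\omega$-plurisubharmonic (quasi-subharmonic) functions, writing $\delta_{x}=\omega+dd^{c}g_{x}$ for the Fubini--Study form $\omega$ and its Green quasi-potential $g_{x}$, and to decompose smooth test functions as differences of $C\omega$-psh functions. Second, the contraction estimate in your third step addresses only the $dd^{c}$-part; you must separately establish the convergence of the smooth part $\frac{1}{d_{1}^{\nu}}(\Gamma^{\circ\nu})^{*}\omega$. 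Writing $\frac{1}{d_{1}}\Gamma^{*}\omega=\omega+dd^{c}v$ with $v$ smooth and iterating gives a telescoping series whose $k$-th term is $\frac{1}{d_{1}^{k}}(\Gamma_{*})^{k}v$, bounded by $(d_{0}/d_{1})^{k}\|v\|_{\infty}$; this is where the geometric convergence is actually harvested in the reference, and your outline does not isolate it. Third, the polar exceptional set does not merely ``arise as the $-\infty$-locus of a limiting potential'' in a vague sense; one must track the sequence $\frac{1}{d_{1}^{\nu}}(\Gamma_{*})^{\nu}g_{x}$, show it is uniformly bounded above, and identify the set where its Hartogs upper envelope drops (equivalently, where the relevant potential theory degenerates) as the polar set $\mathcal{E}$. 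Until these three points are carried out honestly, the sketch identifies the right mechanism but does not yet constitute a proof.
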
 

This limiting measure is called the \emph{Dinh-Sibony measure}, that we denote in this paper by $\mu$ and the support of $\mu$, denoted by $\mathcal{X}$ remains away from the normality set of $\Gamma$, as proved in \cite{bs:2016}. Further, $\mathcal{X}$ is backward invariant under the action of $\Gamma$, {\it i.e.}, for any $x \in \mathcal{X}$, 
\[ \bigcup_{\nu\, \ge\, 1} \big\{ y \in \overline{\mathbb{C}}\ :\ (y, x) \in \Gamma^{\circ \nu} \big\}\ \ \subseteq\ \ \mathcal{X}. \] 

As earlier, let $\mathcal{C}(\mathcal{X}, \mathbb{R})$ denote the Banach space of real-valued continuous functions defined on the compact metric space $\mathcal{X}$ equipped with the essential supremum norm. For any fixed $f \in \mathcal{C}(\mathcal{X}, \mathbb{R})$, we define a Ruelle operator on $\mathcal{C}(\mathcal{X}, \mathbb{R})$ denoted by $\mathcal{L}_{f}$ whose action on the points in $\mathcal{X}$ is as prescribed below.  
\[ \left( \mathcal{L}_{f} (g) \right) (x)\ \ :=\ \ \sum_{y\, \in\, \left\{ (y, x)\, \in\, \Gamma \right\}} e^{f(y)} g(y). \] 
Since the action of $\mathcal{L}_{f}( g )$ on points in $\mathcal{X}$ depends continuously on $\mathcal{X}$, it is obvious that $\mathcal{L}_{f} (g) \in \mathcal{C} \left( \mathcal{X}, \mathbb{R} \right)$. Moreover, $\mathcal{L}_{f}$ is a bounded linear operator. Note that the definition of the Ruelle operator naturally leads us to the iterates of the Ruelle operator, namely $\mathcal{L}_{f}^{\nu}$ for any $\nu \in \mathbb{Z}_{+}$, that satisfies, 
\[ \left( \mathcal{L}_{f}^{\nu} (g) \right) (x)\ \ =\ \ \sum_{\boldsymbol{\beta}\, \in\, {\rm Cyl}_{\nu}}\ \ \sum_{\mathfrak{X}_{\nu}^{-} \left( x; \boldsymbol{\beta} \right)\, \in\, \mathscr{Q}_{\nu}^{\Gamma} (x)} e^{\sum_{j\, =\, 1}^{\nu} f \left( \Pi_{j}^{-} \left( \mathfrak{X}_{\nu}^{-} \left( x; \boldsymbol{\beta} \right) \right) \right)} g \left( \Pi_{\nu}^{-} \left( \mathfrak{X}_{\nu}^{-} \left( x; \boldsymbol{\beta} \right) \right) \right). \]
For ease of writing and keeping in tune with the definition of $\nu$-th order ergodic sum, as stated in Equation \eqref{Summation}, we define and denote 
\[ \sum_{j\, =\, 1}^{\nu} f \left( \Pi_{j}^{-} \left( \mathfrak{X}_{\nu}^{-} \left( x; \boldsymbol{\beta} \right) \right) \right)\ \ =\ \ \mathcal{Z}_{\nu}^{-} (f) \left( \mathfrak{X}_{\nu}^{-} \left( x; \boldsymbol{\beta} \right) \right), \] 
so that the action of the iterates of the Ruelle operator can be rewritten as 
\[ \left( \mathcal{L}_{f}^{\nu} (g) \right) (x)\ \ :=\ \ \sum_{\boldsymbol{\beta}\, \in\, {\rm Cyl}_{\nu}}\ \ \sum_{\mathfrak{X}_{\nu}^{-} \left( x; \boldsymbol{\beta} \right)\, \in\, \mathscr{Q}_{\nu}^{\Gamma} (x)} e^{\mathcal{Z}_{\nu}^{-} (f) \left( \mathfrak{X}_{\nu}^{-} \left( x; \boldsymbol{\beta} \right) \right)} g \left( \Pi_{\nu}^{-} \left( \mathfrak{X}_{\nu}^{-} \left( x; \boldsymbol{\beta} \right) \right) \right). \] 

Let $\mathcal{L}_{f}^{*}$ denote the adjoint operator, corresponding to the Ruelle operator $\mathcal{L}_{f}$, defined on the dual space of $\mathcal{C} \left( \mathcal{X}, \mathbb{R} \right)$, namely the space of all signed measures supported on $\mathcal{X}$, denoted by $\mathcal{C} \left( \mathcal{X}, \mathbb{R} \right)^{*}$. For every signed measure $\rho \in \mathcal{C} \left( \mathcal{X}, \mathbb{R} \right)^{*}$, the action of the measure $\mathcal{L}_{f}^{*} (\rho)$ on any $g \in \mathcal{C} \left( \mathcal{X}, \mathbb{R} \right)$ is given by 
\[ \left[ \mathcal{L}_{f}^{*} (\rho) \right] (g)\ \ =\ \ \rho \left( \mathcal{L}_{f} (g) \right)\ \ =\ \ \int_{\mathcal{X}} \left( \mathcal{L}_{f} g \right) \mathrm{d} \rho. \] 
Observe that the set of probability measures supported on $\mathcal{X}$, denoted by $\mathcal{M} (\mathcal{X})$ is a subset of $\mathcal{C} \left( \mathcal{X}, \mathbb{R} \right)^{*}$. 

\section{Expansive correspondences} 
\label{expcorrsec} 

As in the case of dynamics of maps, we expect the holomorphic correspondence $\Gamma$ to satisfy a condition of expansion. However, since the action of the correspondence on $\mathcal{X}$ is not forward invariant, we present a restrictive definition of expansivity, in order that we consider only those points that always remain in $\mathcal{X}$. 

\begin{definition} 
\label{expcorr} 
A holomorphic correspondence $\Gamma$, as represented in Equation \eqref{correspondence}, is said to be an \emph{expansive correspondence on $\mathcal{X}$} if there exists a constant $\lambda > 1$ such that for any pair of points $x, y \in \mathcal{X}$ with $(x_{-1}, x) \in \Gamma_{j}$, a variety in the definition of $\Gamma$, there exists a point $y_{-1} \in \mathcal{X}$ such that 
\begin{enumerate} 
\item $(y_{-1}, y)$ belongs to the same variety $\Gamma_{j}$ as $(x_{-1}, x)$ and 
\item $\lambda d_{\overline{\mathbb{C}}} (x_{-1}, y_{-1}) < d_{\overline{\mathbb{C}}} (x, y)$. 
\end{enumerate} 
\end{definition} 

From now on, we shall consider expansive holomorphic correspondences $\Gamma$ restricted on the support of the Dinh-Sibony measure, $\mathcal{X}$, a compact set in $\overline{\mathbb{C}}$. For every $f \in \mathcal{C} (\mathcal{X})$ and $k \in \mathbb{Z}_{+}$, let 
\begin{equation} 
\label{omegak} 
\omega_{k} (f)\ \ :=\ \ \sup \left\{ \left| f(x) - f(y) \right|\ :\ d_{\overline{\mathbb{C}}} (x, y) \le \frac{1}{\lambda^{k - 1}} \right\} 
\end{equation} 
and consider the subset $\mathcal{C}^{\alpha} \left( \mathcal{X}, \mathbb{R} \right) \subset \mathcal{C} \left( \mathcal{X}, \mathbb{R} \right)$ where $\alpha = \lambda^{-1}$ given by  
\[ \mathcal{C}^{\alpha} \left( \mathcal{X}, \mathbb{R} \right)\ \ =\ \ \left\{ f \in \mathcal{C} \left( \mathcal{X}, \mathbb{R} \right) : \sum_{k \ge 1} \omega_{k} (f) + \| f \|_{\infty} < \infty \right\}. \] 
Then, it is a well-known fact that $\mathcal{C}^{\alpha} \left( \mathcal{X}, \mathbb{R} \right)$ is a Banach space with respect to the norm, 
\[ \| f \|_{\alpha}\ \ =\ \ \sum_{k\, \ge\, 1} \omega_{k} (f) + \| f \|_{\infty}. \] 

We now state the main result of this section, namely the Ruelle operator theorem, pertaining to expansive holomorphic correspondences.

\begin{theorem} 
\label{rot} 
Let $\Gamma$, as represented in Equation \eqref{correspondence}, denote an expansive holomorphic correspondence and $\mathcal{X}$, the support of the Dinh-Sibony measure associated to $\Gamma$. Consider the Ruelle operator $\mathcal{L}_{f}$ corresponding to a function $f \in \mathcal{C}^{\alpha} \left( \mathcal{X}, \mathbb{R} \right)$, defined on $\mathcal{C} \left( \mathcal{X}, \mathbb{R} \right)$. Then, 
\begin{enumerate} 
\item There exists a simple, positive eigenvalue, say $\Lambda$ of $\mathcal{L}_{f}$ with a corresponding positive eigenfunction, say $h \in \mathcal{C} \left( \mathcal{X}, \mathbb{R} \right)$. 
\item For every $g \in \mathcal{C} \left( \mathcal{X}, \mathbb{R} \right)$, the sequence $\left\{ \dfrac{1}{\Lambda^{\nu}} \mathcal{L}_{f}^{\nu} (g) \right\}_{\nu\, \ge\, 1}$ converges uniformly in $\mathcal{X}$. 
\end{enumerate} 
\end{theorem}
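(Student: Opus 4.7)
The plan is to adapt the classical Ruelle-Perron-Frobenius strategy to the correspondence setting, with the expansivity condition of Definition \ref{expcorr} playing the role of hyperbolicity. The key preliminary step is a \emph{distortion lemma} for ergodic sums along backward orbits: for $f \in \mathcal{C}^{\alpha}(\mathcal{X},\mathbb{R})$ and any combinatorial data $\boldsymbol{\beta} \in {\rm Cyl}_{\nu}$ such that both $x, y \in \mathcal{X}$ admit $\boldsymbol{\beta}$-backward orbits, one inductively invokes Definition \ref{expcorr} to build matching orbits $\mathfrak{X}_{\nu}^{-}(x;\boldsymbol{\beta})$ and $\mathfrak{X}_{\nu}^{-}(y;\boldsymbol{\beta})$ with $d_{\overline{\mathbb{C}}}(x_{-k},y_{-k}) \leq \lambda^{-k}\,d_{\overline{\mathbb{C}}}(x,y)$. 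Summing H\"older moduli then yields the telescoping bound $\bigl|\mathcal{Z}_{\nu}^{-}(f)(\mathfrak{X}_{\nu}^{-}(x;\boldsymbol{\beta})) - \mathcal{Z}_{\nu}^{-}(f)(\mathfrak{X}_{\nu}^{-}(y;\boldsymbol{\beta}))\bigr| \leq \sum_{k \geq 1} \omega_{k}(f) \leq \|f\|_{\alpha}$, uniformly in $\nu$.

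For Part (1), I would first extract the eigenvalue by applying the Schauder-Tychonoff fixed point theorem to the continuous map $T(\rho) := \mathcal{L}_{f}^{*}\rho \,/\, \int \mathcal{L}_{f}(1)\,\mathrm{d}\rho$ on the weak*-compact convex set $\mathcal{M}(\mathcal{X})$, producing a probability measure $\rho$ with $\mathcal{L}_{f}^{*}\rho = \Lambda \rho$ for $\Lambda = \int \mathcal{L}_{f}(1)\,\mathrm{d}\rho > 0$. The eigenfunction $h$ is then obtained as a uniform limit of the Ces\`aro averages $h_{n} := n^{-1}\sum_{k=0}^{n-1} \Lambda^{-k}\mathcal{L}_{f}^{k}(1)$: the distortion lemma delivers equicontinuity and uniform upper and lower bounds for $\{h_{n}\}$ (using that $\int h_{n}\,\mathrm{d}\rho = 1$ by duality), so Arzel\`a-Ascoli supplies a subsequential limit $h > 0$, and the Ces\`aro construction forces $\mathcal{L}_{f} h = \Lambda h$. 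Simplicity of $\Lambda$ then follows from a minimum-principle argument: any second positive eigenfunction $g$ for $\Lambda$ would force $g/h$ to attain its infimum at some $x^{*} \in \mathcal{X}$, after which expanding $\mathcal{L}_{f}g = \Lambda g$ and dividing by $h$ pins $g/h$ to this infimum along every backward branch of $x^{*}$, hence on all of $\mathcal{X}$ by backward-invariance of $\mathrm{supp}(\mu)$.

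For Part (2), I would conjugate to the normalised operator $\tilde{\mathcal{L}}g := \Lambda^{-1} h^{-1}\mathcal{L}_{f}(hg)$, which satisfies $\tilde{\mathcal{L}}(1) = 1$; the claimed convergence is then equivalent to $\tilde{\mathcal{L}}^{n} g$ converging uniformly to a constant for every $g \in \mathcal{C}(\mathcal{X},\mathbb{R})$. By density, it suffices to handle $g \in \mathcal{C}^{\alpha}(\mathcal{X},\mathbb{R})$, in which case $\tilde{\mathcal{L}}^{n} g(x)$ is a convex combination of values of $g$ along backward orbits of length $n$ landing at $x$; by expansivity, orbits sharing the same combinatorial data have spatial diameter at most $\lambda^{-n}\,\mathrm{diam}(\mathcal{X})$, and the weights admit uniform comparison across different $x$ thanks to the distortion lemma, so the oscillation of $\tilde{\mathcal{L}}^{n}g$ decays geometrically. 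The main obstacle I anticipate is precisely the combinatorial bookkeeping absent in the single-map setting: every backward step involves both choosing a variety $\Gamma_{j}$ and an appropriate preimage within it (weighted by the multiplicity $m_{j}$), and one must verify that the matching backward branches supplied by expansivity biject correctly with the terms in the sum defining $\mathcal{L}_{f}^{\nu}$, while the combinatorial labels $\boldsymbol{\beta}$ remain pinned throughout all $\nu$ iterations.
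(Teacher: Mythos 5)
Your Part (1) follows the paper's skeleton: the eigenvalue is extracted exactly as in the paper by Schauder--Tychonoff applied to $\rho \mapsto \mathcal L_f^*\rho/\int\mathcal L_f(1)\,\mathrm d\rho$ on $\mathcal M(\mathcal X)$, and your simplicity argument is the same minimum-principle one. Your eigenfunction construction via Ces\`aro averages of $\Lambda^{-k}\mathcal L_f^k(1)$ and Arzel\`a--Ascoli is a genuine alternative to the paper, which instead applies Schauder--Tychonoff a second time to $\mathcal T = \Lambda^{-1}\mathcal L_f$ on the bounded-distortion cone $\Omega(\mathcal X,\mathbb R)$. Both work; the distortion lemma you state at the outset is exactly the device the paper uses (it is the quantity $\mathcal S(x,y)$ in Equation \eqref{sxy}), and your bounds $\int h_n\,\mathrm dm =1$ together with bounded distortion do supply the uniform positivity needed. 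One small slip: at the end of the simplicity argument you need \emph{density} of the full backward orbit $\bigcup_\nu\bigcup_{\boldsymbol\beta}\Pi_\nu^-(\mathfrak X_\nu^-(x^*;\boldsymbol\beta))$ in $\mathcal X$, not merely backward-invariance of $\mathcal X$; the paper invokes this density explicitly.

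The real gap is in Part (2). The step ``the oscillation of $\tilde{\mathcal L}^n g$ decays geometrically'' does not follow from contraction of matched backward branches together with bounded distortion alone. Bounded distortion gives only $p_n(x,\boldsymbol\beta)/p_n(y,\boldsymbol\beta) \in [e^{-D}, e^{D}]$ for a \emph{fixed} constant $D$, so the $\ell^1$-distance of the two weight vectors is bounded away from $0$ rather than tending to $0$; and the spatial contraction of branches only controls $\sum_{\boldsymbol\beta} p_n(x,\boldsymbol\beta)\,|g(x_{-n,\boldsymbol\beta}) - g(y_{-n,\boldsymbol\beta})|$, not $\sum_{\boldsymbol\beta}|p_n(x,\boldsymbol\beta) - p_n(y,\boldsymbol\beta)|\,|g(y_{-n,\boldsymbol\beta})|$. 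Without a mixing-type ingredient, $\tilde{\mathcal L}^n g$ need not converge to a constant at all (think of a periodic/decomposable system). What actually closes this gap --- and the one idea missing from your proposal --- is the \emph{density of backward preimages}: in the paper, this is Lemma \eqref{density}, and the resulting convergence argument is the three-step one visible in the display preceding that lemma: equicontinuity of $\{\mathcal L_{F^*}^\nu G\}$ gives a subsequential limit $G_*$; $\sup \mathcal L_{F^*}^\nu G$ is nonincreasing, so $\sup G_* = \sup\mathcal L_{F^*}^\nu G_*$; a point attaining $\sup\mathcal L_{F^*}^\nu G_*$ is a convex combination of values of $G_*$ along backward branches, so $G_*$ equals its supremum on all backward branches of that point, hence (by density) on all of the space, forcing $G_*$ constant; finally, monotonicity of $\sup\mathcal L_{F^*}^\nu G$ identifies all subsequential limits. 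You need to supply this argument, or an equivalent mixing input, to make Part (2) go through. The paper also differs structurally in that it routes through the orbit space $\mathscr P^{\mu,\Gamma}(\mathcal X)$ with the shift $\sigma$, using Londhe's recurrence theorem to guarantee that space is nonempty over every $x_0\in\mathcal X$; this lets it apply classical single-map Ruelle theory before conjugating back. Your direct approach on $\mathcal X$ can in principle avoid that detour, but only if you independently establish density of backward preimages of $\Gamma$ in $\mathcal X$.
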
 

We now write parts of the proof of Theorem \eqref{rot}, by making the following claims and proving them one-by-one. 

\begin{claim} 
There exists some $m \in \mathcal{M} (\mathcal{X})$ such that $\mathcal{L}_{f}^{*} m = \Lambda m$. 
\end{claim}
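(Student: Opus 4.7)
The plan is to exhibit $m$ as a fixed point of the normalized adjoint operator on probability measures, via the Schauder-Tychonoff fixed point theorem, and then to read off $\Lambda$ as the normalizing constant. To that end, I would first define the map $T : \mathcal{M}(\mathcal{X}) \longrightarrow \mathcal{M}(\mathcal{X})$ by
\[ T(\rho)\ \ :=\ \ \frac{\mathcal{L}_{f}^{*}(\rho)}{\rho\left( \mathcal{L}_{f} \mathbf{1} \right)}, \]
where $\mathbf{1}$ denotes the constant function equal to $1$ on $\mathcal{X}$. Using the identity $\left(\mathcal{L}_{f}^{*} \rho \right)(\mathbf{1}) = \rho\left( \mathcal{L}_{f} \mathbf{1} \right)$, this normalization guarantees that $T(\rho)$ is indeed a probability measure. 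For $T$ to be well defined, I need the denominator to be bounded below uniformly in $\rho$. By the first axiom in the definition of a holomorphic correspondence, every $x \in \mathcal{X}$ has at least one $\Gamma$-preimage, and the backward invariance of $\mathcal{X}$ recalled in Section \eqref{trop} places every such preimage inside $\mathcal{X}$, so $\mathcal{L}_{f} \mathbf{1}$ is a strictly positive element of $\mathcal{C}\left( \mathcal{X}, \mathbb{R} \right)$. Compactness of $\mathcal{X}$ then yields a constant $c_{0} > 0$ with $\rho\left( \mathcal{L}_{f} \mathbf{1} \right) \ge c_{0}$ for every $\rho \in \mathcal{M}(\mathcal{X})$.

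Next, I would observe that $\mathcal{M}(\mathcal{X})$ is a non-empty, convex, weak-$*$ compact subset of the locally convex space $\mathcal{C}\left( \mathcal{X}, \mathbb{R} \right)^{*}$; convexity is immediate, and the weak-$*$ compactness follows from Banach-Alaoglu together with the weak-$*$ closedness of the conditions $\rho \ge 0$ and $\rho(\mathbf{1}) = 1$. I would then verify that $T$ is weak-$*$ continuous: for any test function $g \in \mathcal{C}\left( \mathcal{X}, \mathbb{R} \right)$, the action $\rho \mapsto \left( \mathcal{L}_{f}^{*} \rho \right)(g) = \rho\left( \mathcal{L}_{f} g \right)$ is weak-$*$ continuous since $\mathcal{L}_{f} g \in \mathcal{C}\left( \mathcal{X}, \mathbb{R} \right)$, and the scalar normalizing factor $\rho \mapsto \rho\left( \mathcal{L}_{f} \mathbf{1} \right)$ is weak-$*$ continuous and bounded away from zero by $c_{0}$, so division by it preserves continuity. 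The Schauder-Tychonoff fixed point theorem therefore supplies some $m \in \mathcal{M}(\mathcal{X})$ with $T(m) = m$, which, upon clearing the denominator, reads $\mathcal{L}_{f}^{*} m = \Lambda m$ with $\Lambda := m\left( \mathcal{L}_{f} \mathbf{1} \right) \ge c_{0} > 0$. Identifying $\Lambda$ with the eigenvalue claimed in Theorem \eqref{rot} is consistent because subsequent claims in the theorem's proof will show that the same $\Lambda$ serves as an eigenvalue of $\mathcal{L}_{f}$ itself.

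The main obstacle, in my view, is establishing that $\mathcal{L}_{f} \mathbf{1}$ is simultaneously continuous and strictly positive on all of $\mathcal{X}$. The strict positivity rests on the backward invariance of $\mathcal{X}$ and the surjectivity of $\pi_{2}|_{\Gamma_{j}}$, both of which are already in place. Continuity is the subtler point, since for a correspondence the cardinality of the fibre $\{y : (y, x) \in \Gamma\}$ can jump at critical values of the individual branches; however, this has already been taken up in Section \eqref{trop}, where it is asserted that $\mathcal{L}_{f}$ maps $\mathcal{C}\left( \mathcal{X}, \mathbb{R} \right)$ into itself, and I will simply invoke that fact. The remaining verifications (weak-$*$ continuity of $T$, compactness of $\mathcal{M}(\mathcal{X})$) are then routine.
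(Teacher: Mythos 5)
Your proof is correct and follows essentially the same route as the paper: you define the normalized adjoint map $\rho \mapsto \mathcal{L}_{f}^{*}(\rho)/\rho(\mathcal{L}_{f}\mathbf{1})$ on $\mathcal{M}(\mathcal{X})$, observe the denominator is positive, and apply the Schauder--Tychonoff theorem on the weak-$*$ compact convex set $\mathcal{M}(\mathcal{X})$ to extract a fixed point, reading off $\Lambda = m(\mathcal{L}_{f}\mathbf{1})$. The only difference is that you supply a little more detail on the lower bound for the denominator and on weak-$*$ continuity, which the paper states without elaboration.
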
 

\begin{proof} 
Define a map $\Phi$ on $\mathcal{M} (\mathcal{X})$ by 
\[ \displaystyle{\Phi (\rho)\ \ :=\ \ \frac{\mathcal{L}_{f}^{*} (\rho)}{\int_{\mathcal{X}} \mathcal{L}_{f} (1) \mathrm{d}\rho}}. \] 
Note that $\displaystyle{\int_{\mathcal{X}} \mathcal{L}_{f}(1) \mathrm{d}\rho} > 0$, thus making the map $\Phi$ continuous. Since $\mathcal{X}$ is a compact metric space, we have $\mathcal{M} (\mathcal{X})$ to be a compact convex space, which, in turn implies $\Phi$ has a fixed point. In other words, there exists a probability measure $m \in \mathcal{M} (\mathcal{X})$ such that $\Phi (m) = m$, {\it i.e.}, $\mathcal{L}_{f}^{*} (m) = \Lambda m$, where $\displaystyle{\Lambda = \int_{\mathcal{X}} \mathcal{L}_{f} (1) \mathrm{d}m}$. 
\end{proof} 

\begin{claim} 
\label{claimtwo} 
There exists some $h \in \mathcal{C} \left( \mathcal{X}, \mathbb{R} \right)$ such that $\mathcal{L}_{f} h = \Lambda h$. 
\end{claim}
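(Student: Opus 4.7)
The plan is to construct $h$ as the uniform limit of Ces\`{a}ro averages of normalised iterates of the Ruelle operator. Set $\tilde{\mathcal{L}} := \Lambda^{-1}\mathcal{L}_{f}$, so that the previous claim reads $\tilde{\mathcal{L}}^{*} m = m$, and define
\[ h_{n}\ :=\ \frac{1}{n}\sum_{k\, =\, 0}^{n - 1} \tilde{\mathcal{L}}^{k} (1). \]
Integrating against $m$ and using $\tilde{\mathcal{L}}^{*}m = m$ immediately yields $\int_{\mathcal{X}} h_{n}\, \mathrm{d}m = 1$ for every $n$, so the sequence cannot degenerate to zero. Any subsequential uniform limit $h$ of $\{h_{n}\}$ will satisfy $\tilde{\mathcal{L}}h - h = \lim_{n \to \infty} n^{-1} \bigl( \tilde{\mathcal{L}}^{n}(1) - 1 \bigr) = 0$ provided $\{\tilde{\mathcal{L}}^{n}(1)\}_{n}$ is uniformly bounded, thereby giving $\mathcal{L}_{f} h = \Lambda h$.

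The technical heart of the argument is a \emph{bounded distortion} estimate, namely that there exists a constant $C > 0$ such that $\mathcal{L}_{f}^{\nu}(1)(x) \le C\, \mathcal{L}_{f}^{\nu}(1)(y)$ whenever $x, y \in \mathcal{X}$ are sufficiently close, uniformly in $\nu$. To establish this, I would iterate the expansivity of $\Gamma$ on $\mathcal{X}$, as stated in Definition \eqref{expcorr}, to produce a bijective pairing between backward paths $\mathfrak{X}_{\nu}^{-}(x; \boldsymbol{\beta}) \in \mathscr{Q}_{\nu}^{\Gamma}(x)$ and backward paths $\mathfrak{X}_{\nu}^{-}(y; \boldsymbol{\beta}) \in \mathscr{Q}_{\nu}^{\Gamma}(y)$ sharing the same combinatorial data $\boldsymbol{\beta} \in {\rm Cyl}_{\nu}$ and satisfying the geometric contraction
\[ d_{\overline{\mathbb{C}}} \left( \Pi_{j}^{-}(\mathfrak{X}_{\nu}^{-}(x; \boldsymbol{\beta})),\; \Pi_{j}^{-}(\mathfrak{X}_{\nu}^{-}(y; \boldsymbol{\beta})) \right)\ \le\ \lambda^{-j}\, d_{\overline{\mathbb{C}}}(x, y) \]
for $1 \le j \le \nu$. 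Since $f \in \mathcal{C}^{\alpha}(\mathcal{X}, \mathbb{R})$, summing the telescoping differences along paired orbits and appealing to the modulus of continuity $\omega_{k}(f)$ from Equation \eqref{omegak} yields
\[ \bigl| \mathcal{Z}_{\nu}^{-}(f)\bigl(\mathfrak{X}_{\nu}^{-}(x; \boldsymbol{\beta})\bigr) - \mathcal{Z}_{\nu}^{-}(f)\bigl(\mathfrak{X}_{\nu}^{-}(y; \boldsymbol{\beta})\bigr) \bigr|\ \le\ \sum_{k\, \ge\, 1} \omega_{k}(f)\ \le\ \|f\|_{\alpha}, \]
uniformly in $\nu$ and $\boldsymbol{\beta}$. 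Exponentiating and summing over backward paths in $\mathscr{Q}_{\nu}^{\Gamma}(x)$ paired with their counterparts in $\mathscr{Q}_{\nu}^{\Gamma}(y)$ then delivers the distortion estimate with $C = e^{\|f\|_{\alpha}}$. Using a finite covering of $\mathcal{X}$ by small balls, the local distortion bound propagates to a global one, and uniform boundedness of $\{\tilde{\mathcal{L}}^{\nu}(1)\}_{\nu}$ follows by comparing against the normalisation $\int_{\mathcal{X}} \tilde{\mathcal{L}}^{\nu}(1)\, \mathrm{d}m = 1$. A refinement of the same estimate, exploiting that $\omega_{k}(f) \to 0$ as $k \to \infty$, gives equicontinuity of the family $\{h_{n}\}_{n}$.

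The Arzel\`{a}-Ascoli theorem then supplies a subsequence $h_{n_{j}}$ converging uniformly to some $h \in \mathcal{C}(\mathcal{X}, \mathbb{R})$ with $\int_{\mathcal{X}} h\, \mathrm{d}m = 1$, so $h \not\equiv 0$, and the telescoping identity above gives $\mathcal{L}_{f} h = \Lambda h$. Non-negativity of $h$ is immediate from the construction, and strict positivity should follow by iterating the identity $h = \Lambda^{-\nu}\mathcal{L}_{f}^{\nu} h$ and invoking the bounded distortion estimate together with the backward invariance of $\mathcal{X}$ under $\Gamma$, which forces $h$ to be bounded below by a positive constant on all of $\mathcal{X}$. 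The main obstacle I anticipate is the bounded distortion step itself: since no global inverse branches of $\Gamma$ exist in the correspondence setting, Definition \eqref{expcorr} must be iterated carefully to produce the contraction factor $\lambda^{-j}$ at depth $j$ for every backward path, and one must verify that the pairing of backward paths is a genuine bijection between $\mathscr{Q}_{\nu}^{\Gamma}(x)$ and $\mathscr{Q}_{\nu}^{\Gamma}(y)$ respecting the variety $\Gamma_{j}$ indexed by each coordinate of $\boldsymbol{\beta}$.
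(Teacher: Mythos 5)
Your proposal is correct but takes a genuinely different route from the paper. The paper constructs the eigenfunction by a fixed-point argument: it defines the quantity $\mathcal{S}(x,y)$ as a supremum over depths $\nu$ and combinatorial data $\boldsymbol{\beta}$ of differences $\mathcal{Z}_{\nu}^{-}(f)\left(\mathfrak{X}_{\nu}^{-}(x;\boldsymbol{\beta})\right) - \mathcal{Z}_{\nu}^{-}(f)\left(\mathfrak{X}_{\nu}^{-}(y;\boldsymbol{\beta})\right)$ along paired backward orbits, shows $\mathcal{S}(x,y) \le \sum_{k\ge\nu+1}\omega_k(f)$ when $d_{\overline{\mathbb{C}}}(x,y)\le\lambda^{-(\nu-1)}$, uses this to build the convex, equicontinuous, bounded set $\Omega(\mathcal{X},\mathbb{R})$ of non-negative normalised functions satisfying $g(x)\le e^{\mathcal{S}(x,y)}g(y)$, checks that $\mathcal{T}=\Lambda^{-1}\mathcal{L}_f$ preserves $\Omega$, and invokes Schauder--Tychonov. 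You instead take Ces\`aro averages of $\Lambda^{-k}\mathcal{L}_f^k(1)$ and extract the eigenfunction by Arzel\`a--Ascoli, with the eigenequation following from the telescoping identity $\tilde{\mathcal{L}}h_n - h_n = n^{-1}(\tilde{\mathcal{L}}^n(1)-1)$. Both arguments hinge on exactly the same expansivity-driven modulus-of-continuity estimate (your ``bounded distortion'' inequality is precisely the paper's bound on $\mathcal{S}$), but you deploy it to get uniform boundedness and equicontinuity of the raw iterates, while the paper deploys it to verify $\mathcal{T}(\Omega)\subseteq\Omega$. Your route is more elementary in that it avoids a topological fixed-point theorem and is explicitly constructive; the paper's route is more economical once $\Omega$ is set up and has the side benefit of directly exhibiting $h$ as a member of $\Omega$, i.e.\ satisfying $h(x)\le e^{\mathcal{S}(x,y)}h(y)$ by construction rather than by passing to the limit --- though, as you note, your $h_n$ all satisfy this distortion inequality too, and it is preserved under uniform limits. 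The subtlety you flagged --- that the pairing of $\mathscr{Q}_{\nu}^{\Gamma}(x)$ with $\mathscr{Q}_{\nu}^{\Gamma}(y)$ via iterated expansivity must be a genuine injection through each variety $\Gamma_{\beta_j}$ --- is real, but the paper itself asserts exactly this (``the points $\mathfrak{X}_{1}^{-}(y;\boldsymbol{\beta})$ are not repeated. This is possible due to Definition \eqref{expcorr}.'') without further justification, so you are engaging with the problem at the same level of rigour as the original.
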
 

\begin{proof} 
Let $x, y \in \mathcal{X}$. Define 
\begin{eqnarray} 
\label{sxy} 
\mathcal{S} (x, y) & = & \sup_{\nu\, \ge\, 1}\ \ \sup_{\boldsymbol{\beta}\, \in\, {\rm Cyl}_{\nu}}\ \ \Big\{ \mathcal{Z}_{\nu}^{-} (f) \left( \mathfrak{X}_{\nu}^{-} \left( x; \boldsymbol{\beta} \right) \right)\ -\ \mathcal{Z}_{\nu}^{-} (f) \left( \mathfrak{X}_{\nu}^{-} \left( y; \boldsymbol{\beta} \right) \right) \nonumber \\ 
& & \hspace{+0.5cm}  :\ d_{\overline{\mathbb{C}}} \left( \Pi_{j}^{-} \left( \mathfrak{X}_{\nu}^{-} \left( x; \boldsymbol{\beta} \right) \right), \Pi_{j}^{-} \left( \mathfrak{X}_{\nu}^{-} \left( y; \boldsymbol{\beta} \right) \right) \right) < \frac{d_{\overline{\mathbb{C}}} \left(x, y\right)}{\lambda^{j}}\ \forall 1 \le j \le \nu \Big\}. 
\end{eqnarray} 
Suppose the points $x, y \in \mathcal{X}$ are chosen such that $d_{\overline{\mathbb{C}}} \left(x, y\right) \le \dfrac{1}{\lambda^{\nu - 1}}$, then 
\[ \mathcal{S}(x, y)\ \ \le\ \ \sum\limits_{k\, \ge\, \nu + 1} \omega_{k} (f)\  \to\ 0,\ \ \text{as}\ \nu\ \text{grows larger}. \] 
We now define a subset of $\mathcal{C} \left( \mathcal{X}, \mathbb{R} \right)$ denoted by $\Omega \left( \mathcal{X}, \mathbb{R} \right)$ given by 
\begin{eqnarray*} 
\Omega \left( \mathcal{X}, \mathbb{R} \right) & = & \Big\{ g \in \mathcal{C} \left( \mathcal{X}, \mathbb{R} \right)\ :\ g \ge 0,\ \int_{\mathcal{X}} g \mathrm{d}\mu = 1\ \text{and for any} \\ 
& & \hspace{+5cm} \text{pair of points}\ x, y \in \mathcal{X},\ g(x) \le e^{\mathcal{S} (x, y)} g(y) \Big\}. 
\end{eqnarray*} 
Observe that $\Omega \left( \mathcal{X}, \mathbb{R} \right)$ is a convex subspace of $\mathcal{C} \left( \mathcal{X}, \mathbb{R} \right)$. Moreover, given $\epsilon > 0$, we can find $\nu \in \mathbb{Z}_{+}$ such that for all points $x, y \in \mathcal{X}$ with $d(x, y) \le \dfrac{1}{\lambda^{\nu - 1}}$, we have 
\[ |g(x) - g(y)| \le \left| e^{\max{ \left\{ \mathcal{S} (x, y),\; \mathcal{S} (y, x) \right\} }} - 1 \right| \le \epsilon. \] 
In other words, $\Omega \left( \mathcal{X}, \mathbb{R} \right)$ is an equicontinuous family. Further, $\Omega \left( \mathcal{X}, \mathbb{R} \right)$ is also a bounded family. 

For any $g \in \Omega \left( \mathcal{X}, \mathbb{R} \right)$, define an operator $\mathcal{T}$ by $\mathcal{T}(g) = \dfrac{\mathcal{L}_{f} g}{\Lambda}$. Then, by definition $\mathcal{T}(g) \ge 0$ and 
\[ \int \mathcal{T}(g) \mathrm{d}m\ \ =\ \ \int \frac{\mathcal{L}_{f} g}{\Lambda} \mathrm{d}m\ \ =\ \ \int g \mathrm{d}\left(\frac{\mathcal{L}_{f}^{*} m}{\Lambda}\right)\ \ =\ \ \int g \mathrm{d}m\ \ =\ \ 1. \] 
Further, for any $x \in \mathcal{X}$, we have 
\[ \mathcal{T}(g) (x)\ =\ \frac{1}{\Lambda} \mathcal{L}_{f} g (x)\ =\ \frac{1}{\Lambda} \sum_{\boldsymbol{\beta}\, \in\, {\rm Cyl}_{1}}\ \sum_{\mathfrak{X}_{1}^{-} \left( x; \boldsymbol{\beta} \right)\, \in\, \mathscr{Q}_{1}^{\Gamma} (x)} e^{\mathcal{Z}_{1}^{-} (f) \left( \mathfrak{X}_{1}^{-} \left( x; \boldsymbol{\beta} \right) \right)} g \left( \Pi_{1}^{-} \left( \mathfrak{X}_{1}^{-} \left( x; \boldsymbol{\beta} \right) \right) \right). \] 
To every point $\mathfrak{X}_{1}^{-} \left( x; \boldsymbol{\beta} \right) \in \mathscr{Q}_{1}^{\Gamma} (x)$, we associate a unique point $\mathfrak{X}_{1}^{-} \left( y; \boldsymbol{\beta} \right) \in \mathscr{Q}_{1}^{\Gamma} (y)$ for some $y \in \mathcal{X}$, in such a manner that $\lambda d_{\overline{\mathbb{C}}} \left( \Pi_{1}^{-} \left( \mathfrak{X}_{1}^{-} \left( x; \boldsymbol{\beta} \right) \right), \Pi_{1}^{-} \left( \mathfrak{X}_{1}^{-} \left( y; \boldsymbol{\beta} \right) \right) \right) < d_{\overline{\mathbb{C}}} (x, y)$ and the points $\mathfrak{X}_{1}^{-} \left( y; \boldsymbol{\beta} \right)$ are not repeated. This is possible due to Definition \eqref{expcorr}. Then, for such associated points $\mathfrak{X}_{1}^{-} \left( x; \boldsymbol{\beta} \right)$ and $\mathfrak{X}_{1}^{-} \left( y; \boldsymbol{\beta} \right)$, we have 
\begin{eqnarray*} 
& & e^{\mathcal{Z}_{1}^{-} (f) \left( \mathfrak{X}_{1}^{-} \left( x; \boldsymbol{\beta} \right) \right)} g \left( \Pi_{1}^{-} \left( \mathfrak{X}_{1}^{-} \left( x; \boldsymbol{\beta} \right) \right) \right) \\ 
\vspace{+10pt} \\ 
& \le & \frac{e^{\left[ \mathcal{Z}_{1}^{-} (f) \left( \mathfrak{X}_{1}^{-} \left( y; \boldsymbol{\beta} \right) \right)\, +\, \mathcal{Z}_{1}^{-} (f) \left( \mathfrak{X}_{1}^{-} \left( x; \boldsymbol{\beta} \right) \right)\, +\, \mathcal{S} \left( \Pi_{1}^{-} \left( \mathfrak{X}_{1}^{-} \left( x; \boldsymbol{\beta} \right) \right),\, \Pi_{1}^{-} \left( \mathfrak{X}_{1}^{-} \left( y; \boldsymbol{\beta} \right) \right) \right) \right]}}{e^{\mathcal{Z}_{1}^{-} (f) \left( \mathfrak{X}_{1}^{-} \left( y; \boldsymbol{\beta} \right) \right)}} \times\; g \left( \Pi_{1}^{-} \left( \mathfrak{X}_{1}^{-} \left( y; \boldsymbol{\beta} \right) \right) \right) \\ 
\vspace{+10pt} \\ 
& \le & e^{\left[ \mathcal{Z}_{1}^{-} (f) \left( \mathfrak{X}_{1}^{-} \left( y; \boldsymbol{\beta} \right) \right)\, +\, \mathcal{S} (x, y) \right]} g \left( \Pi_{1}^{-} \left( \mathfrak{X}_{1}^{-} \left( y; \boldsymbol{\beta} \right) \right) \right). 
\end{eqnarray*} 
Thus, 
\[ \mathcal{T}(g) (x)\ \le\ \frac{1}{\Lambda}\; e^{\mathcal{S} (x, y)}\; \mathcal{L}_{f} g (y)\ =\ e^{\mathcal{S} (x, y)}\; \left( \mathcal{T} g \right) (y). \] 
Hence, for every $g \in \Omega \left( \mathcal{X}, \mathbb{R} \right)$, we have $\mathcal{T}g \in \Omega \left( \mathcal{X}, \mathbb{R} \right)$. One can then apply the Schauder-Tychonov theorem to the operator $\mathcal{T}$ defined on $\Omega \left( \mathcal{X}, \mathbb{R} \right)$ to conclude the existence of a fixed point of $\mathcal{T}$ in $\Omega \left( \mathcal{X}, \mathbb{R} \right)$; {\it i.e.}, there exists a function, say $h \in \Omega \left( \mathcal{X}, \mathbb{R} \right)$ that satisfies $\mathcal{T} h = h$, meaning $h$ is an eigenfunction of $\mathcal{L}_{f}$ with eigenvalue $\Lambda;\ \mathcal{L}_{f} h = \Lambda h$. 
\end{proof} 

\begin{claim} 
$\Lambda$ is a simple eigenvalue for $\mathcal{L}_{f}$. 
\end{claim}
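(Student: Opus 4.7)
The plan is to show that if $h'\in\mathcal{C}(\mathcal{X},\mathbb{R})$ is any eigenfunction with $\mathcal{L}_f h' = \Lambda h'$, then $h'$ is a scalar multiple of the positive eigenfunction $h$ obtained in Claim \ref{claimtwo}. First I would verify that $h>0$ on $\mathcal{X}$: the inequality $h(x)\le e^{\mathcal{S}(x,y)}h(y)$ inherited from $\Omega(\mathcal{X},\mathbb{R})$ implies that if $h$ vanished at any $y_0\in\mathcal{X}$, then $h$ would vanish identically, contradicting $\int_{\mathcal{X}}h\,\mathrm{d}m = 1$. With $h$ strictly positive and continuous on a compact set, the quotient $g := h'/h$ lies in $\mathcal{C}(\mathcal{X},\mathbb{R})$.

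Next I would exploit the eigenvalue equation. Writing $h' = gh$ and applying $\mathcal{L}_f$:
\[
\Lambda h(x) g(x) \ =\ \mathcal{L}_f(gh)(x) \ =\ \sum_{\mathfrak{X}_1^-(x;\boldsymbol{\beta})\in\mathscr{Q}_1^{\Gamma}(x)} e^{f(\Pi_1^-(\mathfrak{X}_1^-(x;\boldsymbol{\beta})))}\, h(\Pi_1^-(\cdot))\, g(\Pi_1^-(\cdot)).
\]
Dividing by $\Lambda h(x) = \mathcal{L}_f h(x)$, the right side becomes a convex combination $g(x) = \sum_{y}\kappa_y(x)\,g(y)$ with strictly positive weights $\kappa_y(x)$ summing to one, the sum taken over the preimages $y = \Pi_1^-(\mathfrak{X}_1^-(x;\boldsymbol{\beta}))$, counted with the multiplicities dictated by $\Gamma$. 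Let $M := \max_{\mathcal{X}}g$ and $x_0$ a point attaining this maximum. Since $g(x_0)$ is a convex combination of values $g(y)\le M$, equality forces $g(y) = M$ for every $y$ in the one-step preimage $\mathscr{Q}_1^{\Gamma}(x_0)$. Iterating the same argument at each such $y$, we obtain $g \equiv M$ on the entire backward orbit $\bigcup_{\nu\ge 1}\{y\in\mathcal{X}: (y,x_0)\in\Gamma^{\circ\nu}\}$.

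Finally I would invoke the defining property of the Dinh–Sibony measure recalled at the start of Section \ref{trop}: for $x_0\in\mathcal{X}$ (lying outside the exceptional polar set), the normalised preimage measures $\frac{1}{d_1^{\nu}}\sum_{(y,x_0)\in\Gamma^{\circ\nu}}\delta_y$ converge weakly to $\mu$, whose support is $\mathcal{X}$. Consequently the backward orbit of $x_0$ is dense in $\mathcal{X}$, and by continuity $g\equiv M$ throughout $\mathcal{X}$. Hence $h' = Mh$, so the $\Lambda$-eigenspace is one-dimensional, which is exactly simplicity.

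The delicate step will be the last one: matching the combinatorial bookkeeping in the definition of $\mathcal{L}_f$ (the sum over $\boldsymbol{\beta}\in\mathrm{Cyl}_1$ and the multiplicities $m_j$ in the correspondence) with the Dinh–Sibony equidistribution statement, and confirming that the maximising point $x_0$ is not trapped in the exceptional polar set $\mathcal{E}$. Should $x_0$ land in $\mathcal{E}$, one either argues by continuity via a sequence of maxima on $\mathcal{X}\setminus\mathcal{E}$ or replaces $x_0$ by a suitably chosen nearby point, using the equicontinuity of $g$ and the fact that $\mathcal{E}$ is pluripolar, hence topologically small relative to $\mathcal{X}$.
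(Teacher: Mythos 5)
Your proposal is correct, and it takes a genuinely different route from the paper's own argument. The paper defines $s = \sup \{ t \ge 0 : h - t g \ge 0 \}$, locates a point $y$ where $(h - sg)(y) = 0$, expands $0 = \Lambda^{\nu} (h - sg)(y) = \mathcal{L}_{f}^{\nu}(h - sg)(y)$ as a sum of non-negative terms, forces each term to vanish, and then uses density of $\bigcup_{\nu} \{ \Pi_{\nu}^{-}(\mathfrak{X}_{\nu}^{-}(y; \boldsymbol{\beta})) \}$ in $\mathcal{X}$ to conclude $h - sg \equiv 0$. You instead normalize: you first establish $h > 0$ (via the $\Omega(\mathcal{X}, \mathbb{R})$ inequality $h(x) \le e^{\mathcal{S}(x,y)} h(y)$), form $g = h'/h$, observe that the eigenvalue equation makes $g(x)$ a strictly positive convex combination of the values of $g$ on preimages, and run a maximum principle along the backward orbit of a maximizer. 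Both arguments ultimately hinge on the same density-of-backward-orbits fact; the paper simply asserts this density for every point of $\mathcal{X}$ (using it in the proof of the same claim and again in Lemma \eqref{density}), whereas you trace it to the Dinh--Sibony equidistribution theorem and flag the exceptional polar set $\mathcal{E}$ as a potential obstruction at the maximizing point $x_{0}$. That extra caution is reasonable but not essential for the paper's presentation, which adopts density for all base points as given. The trade-off is that your route must first verify strict positivity of $h$, a step the paper bypasses by never dividing; in exchange, your convexity argument is arguably more transparent and makes the role of the normalization (compare Equation \eqref{normalisedruelle}) explicit, anticipating the operator $\mathcal{L}_{F^{*}}$ used later in the paper.
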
 

\begin{proof} 
Let $g$ and $h$ be any two distinct eigenfunctions of the operator $\mathcal{L}_{f}$ corresponding to the eigenvalue $\Lambda$. Suppose there exists a point $x \in \mathcal{X}$ such that $g(x) > 0$. Then let $s = \sup \left\{t \ge 0 : h - t g \ge 0 \right\}$. Since the limit $s$ exists, we know that there exists a point $y \in \mathcal{X}$ such that $(h - sg)(y) = 0$. Hence, for all $\nu \in \mathbb{Z}^{+}$, we have 
\begin{eqnarray*} 
0 & = & \Lambda^{\nu} \left( h - sg \right) (y)\ \ =\ \ \mathcal{L}_{f}^{\nu} \left( h - sg \right) (y) \\ 
\vspace{+10pt} \\ 
& = & \sum_{\boldsymbol{\beta}\, \in\, {\rm Cyl}_{\nu}}\ \ \sum_{\mathfrak{X}_{\nu}^{-} \left( y; \boldsymbol{\beta} \right)\, \in\, \mathscr{Q}_{\nu}^{\Gamma} (y)} e^{\mathcal{Z}_{\nu}^{-} (f) \left( \mathfrak{X}_{\nu}^{-} \left( y; \boldsymbol{\beta} \right) \right)} \left( h - sg \right) \left( \Pi_{\nu}^{-} \left( \mathfrak{X}_{\nu}^{-} \left( y; \boldsymbol{\beta} \right) \right) \right). 
\end{eqnarray*} 
Since each term in the summand is non-negative, we have 
\[ \left( h - sg \right) \left( \Pi_{\nu}^{-} \left( \mathfrak{X}_{\nu}^{-} \left( y; \boldsymbol{\beta} \right) \right) \right)\ \ =\ \ 0\ \ \ \text{for every}\ \boldsymbol{\beta} \in {\rm Cyl}_{\nu}\ \ \text{and for all}\ \nu \in \mathbb{Z}^{+}. \] 
However, since $\displaystyle{\bigcup\limits_{\nu\, \in\, \mathbb{Z}_{+}} \bigcup\limits_{\boldsymbol{\beta}\, \in\, {\rm Cyl}_{\nu}} \left\{ \Pi_{\nu}^{-} \left( \mathfrak{X}_{\nu}^{-} \left( y; \boldsymbol{\beta} \right) \right) \right\}}$ is a dense subset of $\mathcal{X}$ for any $y \in \mathcal{X}$, we conclude that $h = sg$ on $\mathcal{X}$. Thus, $\Lambda$ is a simple eigenvalue of $\mathcal{L}_{f}$ with $h$ as its corresponding eigenfunction. 
\end{proof} 

Hence, the proof of Theorem \eqref{rot} is complete, provided we prove the following claim. We postpone the same to Section \eqref{proof}. 


\begin{claim} 
\label{ucgce}
$\left\{ \dfrac{1}{\Lambda^{\nu}} \mathcal{L}_{f}^{\nu} (g) \right\}_{\nu\, \ge\, 1}$ converges uniformly in $\mathcal{X}$ for every $g \in \mathcal{C} \left( \mathcal{X}, \mathbb{R} \right)$. 
\end{claim}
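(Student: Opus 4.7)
The plan is to pass to the normalised transfer operator $\widetilde{\mathcal{L}}$ on $\mathcal{C}(\mathcal{X},\mathbb{R})$, defined by
\[ \widetilde{\mathcal{L}}(g)(x)\ \ =\ \ \frac{1}{\Lambda\, h(x)}\, \mathcal{L}_{f}(hg)(x), \]
where $h$ and $\Lambda$ come from Claim \eqref{claimtwo}, and to show that $\widetilde{\mathcal{L}}^{\nu}(g)$ converges uniformly on $\mathcal{X}$. The desired conclusion then follows upon rewriting $\dfrac{1}{\Lambda^{\nu}}\mathcal{L}_{f}^{\nu}(g) = h \cdot \widetilde{\mathcal{L}}^{\nu}(g/h)$. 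By construction $\widetilde{\mathcal{L}}(1)=1$, and the measure $\widetilde{m}$ defined by $d\widetilde{m} = h\, dm / \int h\, dm$ is a probability measure satisfying $\widetilde{\mathcal{L}}^{*}\widetilde{m}=\widetilde{m}$; in particular $\int \widetilde{\mathcal{L}}^{\nu}(g)\, d\widetilde{m} = \int g\, d\widetilde{m}$ for every $\nu$.

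First I would establish that $\{\widetilde{\mathcal{L}}^{\nu}(g)\}_{\nu\geq 1}$ is uniformly bounded and equicontinuous on $\mathcal{X}$. The bound $\|\widetilde{\mathcal{L}}^{\nu}(g)\|_{\infty}\leq\|g\|_{\infty}$ follows from positivity together with $\widetilde{\mathcal{L}}(1)=1$. For equicontinuity, given $x,y\in\mathcal{X}$ and $\boldsymbol{\beta}\in{\rm Cyl}_{\nu}$, I would iterate Definition \eqref{expcorr} to pair each backward orbit $\mathfrak{X}_{\nu}^{-}(x;\boldsymbol{\beta})\in\mathscr{Q}_{\nu}^{\Gamma}(x)$ with a unique backward orbit $\mathfrak{X}_{\nu}^{-}(y;\boldsymbol{\beta})\in\mathscr{Q}_{\nu}^{\Gamma}(y)$ such that $d_{\overline{\mathbb{C}}}(\Pi_{j}^{-}(\mathfrak{X}_{\nu}^{-}(x;\boldsymbol{\beta})),\Pi_{j}^{-}(\mathfrak{X}_{\nu}^{-}(y;\boldsymbol{\beta})))\leq d_{\overline{\mathbb{C}}}(x,y)/\lambda^{j}$ for every $1\leq j\leq\nu$. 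Telescoping along such pairings and using the very same device employed in the proof of Claim \eqref{claimtwo} to control the quantity $\mathcal{S}(x,y)$ from \eqref{sxy}, one obtains an estimate of the form
\[ \bigl| \widetilde{\mathcal{L}}^{\nu}(g)(x) - \widetilde{\mathcal{L}}^{\nu}(g)(y) \bigr|\ \ \leq\ \ \bigl(e^{\mathcal{S}(x,y)+\mathcal{S}(y,x)}-1\bigr)\|g\|_{\infty}\ +\ \omega_{k(x,y)}(g/h)\|h\|_{\infty}/\inf h, \]
where $k(x,y)$ is chosen with $d_{\overline{\mathbb{C}}}(x,y)\leq \lambda^{-(k(x,y)-1)}$. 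Both terms tend to $0$ as $d_{\overline{\mathbb{C}}}(x,y)\to 0$ independently of $\nu$, which yields equicontinuity.

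Next, by the Arzel\`a--Ascoli theorem, every subsequence of $\{\widetilde{\mathcal{L}}^{\nu}(g)\}$ admits a uniformly convergent sub-subsequence. I would then show that any such limit $\phi$ is constant. The key observation is that since the pairing estimate above also bounds the oscillation of $\widetilde{\mathcal{L}}^{\nu}(g)$ over pairs of points at distance $\leq \lambda^{-(\nu-1)} \mathrm{diam}(\mathcal{X})$, and since expansivity forces the backward orbit tree from any single point to become $\lambda^{-\nu}$-dense in $\mathcal{X}$ in a controlled way (using the density observation from the proof of the previous claim), the oscillation of $\widetilde{\mathcal{L}}^{\nu}(g)$ over all of $\mathcal{X}$ tends to $0$ as $\nu\to\infty$. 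Therefore $\phi$ is constant, and since $\int \widetilde{\mathcal{L}}^{\nu}(g)\, d\widetilde{m} = \int g\, d\widetilde{m}$ for every $\nu$, this constant must equal $\int g\, d\widetilde{m}$. Uniqueness of the subsequential limit forces the full sequence to converge uniformly. Translating back, $\dfrac{1}{\Lambda^{\nu}}\mathcal{L}_{f}^{\nu}(g) \to h(\cdot)\, \dfrac{\int g\, dm}{\int h\, dm}$ uniformly on $\mathcal{X}$.

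The principal obstacle will be the equicontinuity-cum-oscillation-decay estimate: while the single-step pairing provided by Definition \eqref{expcorr} is clear, iterating it across $\nu$ steps for every cylinder $\boldsymbol{\beta}\in{\rm Cyl}_{\nu}$ requires a careful combinatorial bookkeeping to ensure that the pairing $\mathscr{Q}_{\nu}^{\Gamma}(x)\to\mathscr{Q}_{\nu}^{\Gamma}(y)$ is bijective (so that no weights are double-counted or lost), and that the exponential weights $e^{\mathcal{Z}_{\nu}^{-}(f)}$ along paired orbits stay uniformly comparable via $\mathcal{S}(x,y)$. Once this is in place, the remaining arguments are the standard Ruelle--Perron--Frobenius machinery adapted to the backward-orbit formalism set up in Section \eqref{ppi}.
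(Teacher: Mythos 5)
Your proposal is correct in outline but takes a genuinely different route from the paper. You normalise $\mathcal{L}_{f}$ on $\mathcal{C}(\mathcal{X},\mathbb{R})$ directly via conjugation with $h$ and stay on $\mathcal{X}$; the paper instead lifts everything to the compact space $\mathscr{P}^{\mu,\Gamma}(\mathcal{X})$ of infinite forward orbits that remain in $\mathcal{X}$ (whose nonemptiness at every base point rests on Londhe's recurrence theorem), defines a classical shift Ruelle operator $\mathcal{L}_{F^{*}}$ there, proves equicontinuity and density of $\sigma$-preimages in that orbit space, and then pushes the conclusion back down. For the present claim alone, your direct route is a real simplification: the operator $\mathcal{L}_{f}$ only ever looks at backward orbits, and $\mathcal{X}$ is backward invariant, so the lift (and Londhe's theorem) is not forced. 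The paper needs the symbolic lift later, for the spectral-gap corollary that invokes Parry--Pollicott, so it is not wasted effort — but you are right that it can be bypassed here. Your identification of the limiting constant as $\int g\, dm/\int h\, dm$ via $\widetilde{\mathcal{L}}^{*}\widetilde{m}=\widetilde{m}$ is also cleaner and more informative than the paper's argument, which pins the constant down only indirectly through the monotonicity of $\sup\mathcal{L}_{F^{*}}^{\nu}G$.

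One step in your sketch is not yet airtight. You assert that the pairing estimate ``bounds the oscillation of $\widetilde{\mathcal{L}}^{\nu}(g)$ over all of $\mathcal{X}$'' and that this oscillation tends to $0$. The pairing estimate actually only controls $|\widetilde{\mathcal{L}}^{\nu}(g)(x)-\widetilde{\mathcal{L}}^{\nu}(g)(y)|$ in terms of $\mathcal{S}(x,y)$ and a tail modulus $\omega_{k+\nu}(g)$; for fixed $x,y$ the first term $e^{\mathcal{S}(x,y)}-1$ does not shrink with $\nu$, so the global oscillation does not decay from that estimate alone. The argument that actually closes this gap — and which the paper (and you, in the intermediate Arzel\`a--Ascoli step) rely on — is: take a uniform subsequential limit $\phi_{*}$; use $\widetilde{\mathcal{L}}(1)=1$, positivity, and the monotone decrease of $\sup\widetilde{\mathcal{L}}^{\nu}(g)$ to deduce $\sup\widetilde{\mathcal{L}}^{\nu}\phi_{*}=\sup\phi_{*}$ for every $\nu$; since $\widetilde{\mathcal{L}}^{\nu}\phi_{*}$ at the point where its supremum is attained is a convex average of the values of $\phi_{*}$ along the $\nu$-step backward orbit tree, all those values equal $\sup\phi_{*}$; and finally invoke density of backward orbits in $\mathcal{X}$ to conclude $\phi_{*}\equiv\sup\phi_{*}$. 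You should replace the ``oscillation decay from the pairing'' sentence with this argument; the rest of the proposal (equicontinuity, Arzel\`a--Ascoli, identification of the constant, and translating back to $\mathcal{L}_{f}$) then goes through.
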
 

\section{The classical Ruelle operator} 

We start this section with the statement of a theorem due to Londhe as in \cite{ml:2022} that proves a certain minimal recurrence in the forward orbit for every point in the support of the Dinh-Sibony measure. 

\begin{theorem} \cite{ml:2022} 
\label{mlt} 
Let $M$ be a meromorphic correspondence of topological degree $d$ on a compact
complex manifold $X$. Let $\mu$ be a $M^{*}$-invariant Borel probability measure defined on $X$ such that it does not put any mass on pluripolar sets. Then, for any point $x \in {\rm supp}(\mu)$, we have $M(x) \cap {\rm supp}(\mu) \neq \emptyset$, where $M(x)$ denotes the collection of all $y \in X$ such that $(x,y) \in {\rm graph}(M)$. 
\end{theorem}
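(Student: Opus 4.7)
The plan is to argue by contradiction. Suppose $x_{0} \in {\rm supp}(\mu)$ satisfies $M(x_{0}) \cap {\rm supp}(\mu) = \emptyset$. If $x_{0}$ lies in the indeterminacy locus $I$ of $M$ (a pluripolar analytic subset of $X$), then $M(x_{0})$ necessarily contains a positive-dimensional component of $X$ that meets the closed set ${\rm supp}(\mu)$, contradicting the assumption; so we may take $x_{0} \in {\rm supp}(\mu) \setminus I$, where $M(x_{0})$ is a finite set. Since $M(x_{0}) \cap {\rm supp}(\mu) = \emptyset$, one finds an open neighbourhood $V \supset M(x_{0})$ with $\overline{V} \cap {\rm supp}(\mu) = \emptyset$. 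The graph of $M$ is an analytic subvariety of $X \times X$, so the set-valued map $x \mapsto M(x)$ is upper semi-continuous on $X \setminus I$; hence there exists an open neighbourhood $U \ni x_{0}$ with $M(U \setminus I) \subset V$.

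Next, by Urysohn's lemma applied to the disjoint closed sets ${\rm supp}(\mu)$ and $\overline{V}$ in the compact metric manifold $X$, choose a test function $\varphi \in \mathcal{C}(X, [0,1])$ with $\varphi \equiv 1$ on ${\rm supp}(\mu)$ and $\varphi \equiv 0$ on $\overline{V}$. The $M^{*}$-invariance of $\mu$, in the standard formulation for meromorphic correspondences of topological degree $d$, gives
\[ \int_{X} (M^{*}\varphi) \, d\mu \;\; = \;\; d \int_{X} \varphi \, d\mu, \]
where $(M^{*}\varphi)(x) := \sum_{y \in M(x)} \varphi(y)$ is counted with multiplicity and is defined off $I$. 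The right-hand side equals $d$, since $\varphi \equiv 1$ on ${\rm supp}(\mu)$ and $\mu$ is a probability measure.

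Now split the left-hand side as $\int_{U} (M^{*}\varphi)\, d\mu + \int_{X \setminus U} (M^{*}\varphi)\, d\mu$. For $x \in U \setminus I$, by construction $M(x) \subset V$ and $\varphi \equiv 0$ on $V$, so $(M^{*}\varphi)(x) = 0$; since $\mu(I) = 0$ by the pluripolar hypothesis, the entire $U$-integral vanishes. On $X \setminus U$, the trivial bound $M^{*}\varphi \leq d$ gives $\int_{X \setminus U} (M^{*}\varphi)\, d\mu \leq d(1-\mu(U))$. Since $x_{0} \in U \cap {\rm supp}(\mu)$ forces $\mu(U) > 0$, one obtains $d \leq d(1-\mu(U)) < d$, the required contradiction.

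The main technical hurdle is pinning down the correct meaning and global validity of the $M^{*}$-invariance identity for a meromorphic (as opposed to holomorphic) correspondence: a priori $M^{*}\varphi$ is only defined off the pluripolar indeterminacy locus $I$, so integrating it against $\mu$ requires justification. The hypothesis that $\mu$ does not charge pluripolar sets is precisely what is needed, as it allows us to discard $I$ from all integrals without loss and read the invariance as a bona fide $L^{1}(\mu)$ identity, on which the contradiction above rests.
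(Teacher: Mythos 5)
This statement is quoted from Londhe \cite{ml:2022}; the paper itself gives no proof, so your attempt can only be measured against the correct argument. Your geometric setup (argue by contradiction, separate $M(x_{0})$ from ${\rm supp}(\mu)$ by an open set $V$, use upper semi-continuity to get $U \ni x_{0}$ with $M(U\setminus I)\subset V$, then feed a test function into the invariance identity) is the right skeleton, but the invariance identity you invoke is the wrong dual, and the proof collapses there. The hypothesis is $M^{*}\mu = d\,\mu$, where $M^{*}$ is the \emph{pullback} on measures; by adjunction this is equivalent to
\[
\int_{X} \Bigl(\, \sum_{x\, :\, y\, \in\, M(x)} \varphi(x) \Bigr)\, \mathrm{d}\mu(y)\ \ =\ \ d \int_{X} \varphi\, \mathrm{d}\mu,
\]
where the inner sum runs over the \emph{preimages} of $y$. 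What you write, $\int_{X} \bigl(\sum_{y \in M(x)} \varphi(y)\bigr)\, \mathrm{d}\mu(x) = d\int_{X}\varphi\, \mathrm{d}\mu$, is instead the statement $M_{*}\mu = d\,\mu$ (pushforward invariance). That is not a hypothesis and is generally false: with $\varphi \equiv 1$ it reads $d_{0} = d_{1}$, whereas the whole framework here assumes $d_{0} < d_{1}$; already for a degree-$d$ rational map it reads $1 = d$. The entire point of Londhe's theorem is that only pullback-invariance is available while the conclusion concerns forward images — assuming a forward-invariance identity begs the question. Hence the steps ``the right-hand side equals $d$'' and $d \le d(1-\mu(U)) < d$ rest on a false premise.

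The repair keeps your neighbourhoods but flips the test function. Take $\varphi \ge 0$ supported in $U$ with $\varphi(x_{0}) > 0$, so $\int \varphi\, \mathrm{d}\mu > 0$ because $x_{0} \in {\rm supp}(\mu)$. Then $M_{*}\varphi(y) = \sum_{x\, :\, y \in M(x)}\varphi(x)$ vanishes unless $y \in M(U)$, and $M(U\setminus I) \subset V$ with $V \cap {\rm supp}(\mu) = \emptyset$; the pluripolarity hypothesis lets you discard $I$ and the non-generic fibres from all integrals, exactly as you anticipated. The correct identity then yields $0 = \int M_{*}\varphi\, \mathrm{d}\mu = d\int\varphi\, \mathrm{d}\mu > 0$, the desired contradiction. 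A secondary issue: your claim that a positive-dimensional fibre $M(x_{0})$ over an indeterminacy point ``necessarily meets ${\rm supp}(\mu)$'' is unjustified on a general compact manifold — a proper positive-dimensional analytic subset need not meet the support of a given measure — although on $\overline{\mathbb{C}}$ the indeterminacy case is vacuous.
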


Now using Theorem \eqref{mlt}, we define a subset of $\mathscr{P}^{\Gamma} \left( \mathcal{X} \right)$ wherein we collect the forward images of a point $x_{0} \in \mathcal{X}$ that also remains in $\mathcal{X}$, {\it i.e.}, define 
\[ \mathscr{P}^{\mu, \Gamma} \left( \mathcal{X} \right)\ :=\ \left\{ \left( x_{0}, x_{1}, x_{2}, \cdots ; \alpha_{1}, \alpha_{2}, \cdots \right) \in \mathscr{P}^{\Gamma} \left( \mathcal{X} \right) : x_{\nu} \in \mathcal{X}\  \forall \nu \in \mathbb{Z}_{+} \right\}.\]  
Note that, by definition any $\left(x_{i - 1}, x_{i}\right) \in \Gamma\vert_{\mathcal{X} \times \mathcal{X}}$ for all $i \ge 1$. Further, $\mathscr{P}^{\mu, \Gamma} \left( \mathcal{X} \right)$ is an automatic metric space with respect to the appropriate restriction of the metric $d_{\mathscr{P}^{\Gamma} \left( \overline{\mathbb{C}} \right)}$, defined in Equation \eqref{metriconinfspace}. Further, $\mathscr{P}^{\mu, \Gamma} \left( \mathcal{X} \right)$ is a closed subset of $\mathscr{P}^{\Gamma} \left( \widehat{\mathbb{C}} \right)$, and hence, compact. Also note that the restriction of the shift map $\sigma$, as defined in Equation \eqref{shiftmap}, on $\mathscr{P}^{\mu, \Gamma} \left( \mathcal{X} \right)$ yields the map to be forward invariant. Since we know $\mathcal{X}$ to be backward invariant under the action of the correspondence $\Gamma$, we essentially have $\sigma$ to be completely invariant on $\mathscr{P}^{\mu, \Gamma} \left( \mathcal{X} \right)$ {\it i.e.}, $\sigma \left(\mathscr{P}^{\mu, \Gamma} \left( \mathcal{X} \right) \right) = \mathscr{P}^{\mu, \Gamma} \left( \mathcal{X} \right)$. 

As considered in the proof of Theorem \eqref{variational pre}, we now observe that for every $f \in \mathcal{C} \left( \mathcal{X}, \mathbb{R} \right)$, there exists a function $F = f \circ \Pi_{0}^{+} \in  \mathcal{C} \left( \mathscr{P}^{\mu, \Gamma} \left( \mathcal{X} \right), \mathbb{R} \right)$. Let $\mathcal{L}_{F}$ denote the classical Ruelle operator on $\mathcal{C} \left( \mathscr{P}^{\mu, \Gamma} \left( \mathcal{X} \right), \mathbb{R} \right)$ where the dynamics in the underlying space happens through the shift map $\sigma$. Then, for any $G \in \mathcal{C} \left( \mathscr{P}^{\mu, \Gamma} \left( \mathcal{X} \right), \mathbb{R} \right)$, the action of the classical Ruelle operator is given by 
\[ \mathcal{L}_{F} (G) \left( \mathfrak{X}^{+} \left( x_{0}; \boldsymbol{\alpha} \right) \right)\ \ =\ \ \sum_{\mathfrak{X}^{+} \left( x^{*}; \boldsymbol{\alpha}^{*} \right)\ \in\ \sigma^{-1} \left( \mathfrak{X}^{+} \left( x_{0}; \boldsymbol{\alpha} \right) \right)} \hspace{-0.5cm} e^{F \left( \mathfrak{X}^{+} \left( x^{*}; \boldsymbol{\alpha}^{*} \right) \right)} G \left( \mathfrak{X}^{+} \left( x^{*}; \boldsymbol{\alpha}^{*} \right) \right), \] 
so that for any $\nu \in \mathbb{Z}_{+}$, we have 
\begin{eqnarray*} 
\left( \mathcal{L}_{F} \right)^{\nu} (G) \left( \mathfrak{X}^{+} \left( x_{0}; \boldsymbol{\alpha} \right) \right) & = & \sum_{\boldsymbol{\gamma}\, \in\, {\rm Cyl}_{\nu}}\ \sum_{\mathfrak{X}^{+} \left( x_{- \nu}; \boldsymbol{\gamma \alpha} \right)\ \in\ \sigma^{- \nu} \left( \mathfrak{X}^{+} \left( x_{0}; \boldsymbol{\alpha} \right) \right)} \hspace{-1cm} e^{ \sum_{j\, =\, 0}^{\nu - 1} F \left( \sigma^{j} \left( \mathfrak{X}^{+} \left( x_{- \nu}; \boldsymbol{\gamma \alpha} \right) \right) \right)} G \left( \mathfrak{X}^{+} \left( x_{- \nu}; \boldsymbol{\gamma \alpha} \right) \right) \\ 
& = & \sum_{\boldsymbol{\gamma}\, \in\, {\rm Cyl}_{\nu}}\ \sum_{\mathfrak{X}^{+} \left( x_{- \nu}; \boldsymbol{\gamma \alpha} \right)\ \in\ \sigma^{- \nu} \left( \mathfrak{X}^{+} \left( x_{0}; \boldsymbol{\alpha} \right) \right)} \hspace{-1cm} e^{ F_{\nu} \left( \mathfrak{X}^{+} \left( x_{- \nu}; \boldsymbol{\gamma \alpha} \right) \right)} G \left( \mathfrak{X}^{+} \left( x_{- \nu}; \boldsymbol{\gamma \alpha} \right) \right). 
\end{eqnarray*} 
Here, the notation in the last equation $F_{\nu} ( \cdot )$ is as defined in Equation \eqref{ergodsum} and $\boldsymbol{\gamma \alpha}$ merely represents the concatenation of the $\nu$-long word $\boldsymbol{\gamma}$ to the infinitely long word $\boldsymbol{\alpha}$, in the manner written. 

We now establish a relationship between the eigenvalue $\Lambda$ and the corresponding eigenfunction $h \in \mathcal{C} \left( \mathcal{X}, \mathbb{R} \right)$ of $\mathcal{L}_{f}$, as stated in Theorem \eqref{rot} and the respective quantities of the classical Ruelle operator $\mathcal{L}_{F}$. 

\begin{proposition} 
$\Lambda$ is an eigenvalue for $\mathcal{L}_{F}$ with corresponding eigenfunction $H = h \circ \Pi_{0}^{+}$, where $\Lambda$ and $h \in \mathcal{C} \left( \mathcal{X}, \mathbb{R} \right)$, are as stated in Theorem \eqref{rot}. 
\end{proposition}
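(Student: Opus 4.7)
The plan is to verify the eigenfunction equation $\mathcal{L}_F H = \Lambda H$ pointwise, by unpacking the definition of $\mathcal{L}_F$ on the lifted function $H = h \circ \Pi_0^+$ and matching the resulting sum term-by-term with the known identity $\mathcal{L}_f h = \Lambda h$ on $\mathcal{X}$ from Claim \ref{claimtwo}.

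First, I would fix an arbitrary point $\mathfrak{X}^+(x_0;\boldsymbol{\alpha}) = (x_0, x_1, x_2, \ldots;\alpha_1, \alpha_2, \ldots) \in \mathscr{P}^{\mu,\Gamma}(\mathcal{X})$ and describe its $\sigma$-preimages explicitly. A point $\mathfrak{X}^+(x^*;\boldsymbol{\alpha}^*) \in \sigma^{-1}\left(\mathfrak{X}^+(x_0;\boldsymbol{\alpha})\right)$ is precisely a path of the form $(x^*, x_0, x_1, \ldots; \alpha_0^*, \alpha_1, \alpha_2, \ldots)$ with $(x^*,x_0) \in \Gamma_{\alpha_0^*}$ for some $\alpha_0^* \in \{1,\ldots,N\}$ and $x^* \in \mathcal{X}$. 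The requirement $x^* \in \mathcal{X}$ imposes no extra restriction, since $\mathcal{X}$ is backward invariant under $\Gamma$: every $y$ with $(y,x_0) \in \Gamma$ already lies in $\mathcal{X}$. Hence the $\sigma^{-1}$-preimage set is indexed by exactly the same pairs $(x^*,\alpha_0^*)$ that index the sum defining $(\mathcal{L}_f h)(x_0)$, with matching multiplicities $m_{\alpha_0^*}$.

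Next, I would observe that for such a preimage, $F\left(\mathfrak{X}^+(x^*;\boldsymbol{\alpha}^*)\right) = f(\Pi_0^+(\mathfrak{X}^+(x^*;\boldsymbol{\alpha}^*))) = f(x^*)$ and similarly $H\left(\mathfrak{X}^+(x^*;\boldsymbol{\alpha}^*)\right) = h(x^*)$, since both $F$ and $H$ depend only on the base coordinate. Substituting into the definition of $\mathcal{L}_F$ yields
\[
\mathcal{L}_F(H)\left(\mathfrak{X}^+(x_0;\boldsymbol{\alpha})\right) \;=\; \sum_{(x^*,\alpha_0^*)} e^{f(x^*)}\, h(x^*) \;=\; (\mathcal{L}_f h)(x_0) \;=\; \Lambda\, h(x_0) \;=\; \Lambda\, H\left(\mathfrak{X}^+(x_0;\boldsymbol{\alpha})\right),
\]
where the second equality uses the matching of index sets from the previous step and the third invokes Claim \ref{claimtwo}. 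Since $\mathfrak{X}^+(x_0;\boldsymbol{\alpha})$ was arbitrary, this gives $\mathcal{L}_F H = \Lambda H$ on all of $\mathscr{P}^{\mu,\Gamma}(\mathcal{X})$, and since $h$ is a strictly positive element of $\Omega(\mathcal{X},\mathbb{R})$, the lift $H$ is likewise nonzero (in fact strictly positive), so it is a genuine eigenfunction.

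I do not expect a serious obstacle here: the argument is fundamentally a bookkeeping translation between one-step backward expansion under $\Gamma$ (as used to define $\mathcal{L}_f$) and one-step $\sigma$-preimage expansion on the infinite-path space (as used to define $\mathcal{L}_F$). The only delicate points are (i) verifying that the indexing of preimages — including cylinder indices and the multiplicities $m_j$ from Equation \eqref{correspondence} — truly agrees, and (ii) checking that the closure condition defining $\mathscr{P}^{\mu,\Gamma}(\mathcal{X})$ discards no preimage. Both are immediate consequences of backward invariance of $\mathcal{X}$ together with the definition of the shift $\sigma$ in Equation \eqref{shiftmap}.
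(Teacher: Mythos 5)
Your proof is correct and follows essentially the same route as the paper's: identify each $\sigma$-preimage of $\mathfrak{X}^+(x_0;\boldsymbol{\alpha})$ with a one-step $\Gamma$-predecessor of $x_0$ in $\mathcal{X}$ (backward invariance guaranteeing nothing is lost), note that $F$ and $H$ depend only on the base coordinate, and thereby reduce $\mathcal{L}_F H$ to $\mathcal{L}_f h = \Lambda h$. The only cosmetic difference is that you remark explicitly on multiplicities and the indexing of cylinder letters, whereas the paper simply enumerates the $d_1$ preimages $y_1,\dots,y_{d_1}$ and concatenates the corresponding variety index; both are the same bookkeeping.
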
 

\begin{proof} 
For any point $\mathfrak{X}^{+} \left( x_{0}; \boldsymbol{\alpha} \right) \in \mathscr{P}^{\mu, \Gamma} \left( \mathcal{X} \right)$, consider the set $\sigma^{-1} \left( \mathfrak{X}^{+} \left( x_{0}; \boldsymbol{\alpha} \right) \right)$. Since $\mathcal{X}$ is backward invariant, there exists holomorphic branches of iteration of the correspondence such that $\mathfrak{X}^{+} \left( x^{*}; \boldsymbol{\alpha}^{*} \right)$ belongs to $\sigma^{-1} \left( \mathfrak{X}^{+} \left( x_{0}; \boldsymbol{\alpha} \right) \right) \subseteq \mathscr{P}^{\mu, \Gamma} \left( \mathcal{X} \right)$. 

Let the set $\left\{ y_{1}, y_{2}, \cdots, y_{d_{1}} \right\}$ exhaust the collection of all $y_{j}$'s in $\mathcal{X}$ such that $(y_{j}, x_{0}) \in \Gamma$. Here, $d_{1}$ is as defined in Equation \eqref{done}. Then there exists $\gamma_{j} \in \left\{ 1, 2, \cdots, N \right\}$ such that $(y_{j}, x_{0}) \in \Gamma_{\gamma_{j}}$. Concatenating $\gamma_{j}$ with $\boldsymbol{\alpha}$, we observe that $\mathfrak{X}^{+} \left( y_{1}; \gamma_{1} \boldsymbol{\alpha} \right), \mathfrak{X}^{+} \left( y_{2}; \gamma_{2} \boldsymbol{\alpha} \right), \cdots, \mathfrak{X}^{+} \left( y_{d_{1}}; \gamma_{d_{1}} \boldsymbol{\alpha} \right)$ are the only preimages of $\mathfrak{X}^{+} \left( x_{0}; \boldsymbol{\alpha} \right)$ under $\sigma$. Hence, 
\begin{eqnarray*} 
\mathcal{L}_{F} H \left( \mathfrak{X}^{+} \left( x_{0}; \boldsymbol{\alpha} \right) \right) & = & \sum_{1\, \le\, j\, \le\, d_{1}} e^{F \left( \mathfrak{X}^{+} \left( y_{j}; \gamma_{j} \boldsymbol{\alpha} \right) \right)} H \left( \mathfrak{X}^{+} \left( y_{j}; \gamma_{j} \boldsymbol{\alpha} \right) \right) \\ 
& = & \sum_{1\, \le\, j\, \le\, d_{1}} e^{f(y_{j})} h(y_{j})\ =\ \mathcal{L}_{f} h (x_{0})\ =\ \Lambda h(x_{0}) \\ 
& = & \Lambda H \left( \mathfrak{X}^{+} \left( x_{0}; \boldsymbol{\alpha} \right) \right). 
\end{eqnarray*} 
\end{proof}  

Normalising the classical Ruelle operator $\mathcal{L}_{F}$ along the lines of Haydn, as in \cite{H:1999}, we consider 
\begin{equation} 
\label{normalF} 
F^{*}\ \ =\ \ F - \log \left( H \circ \sigma \right) + \log H - \log \Lambda \in \mathcal{C} \left( \mathscr{P}^{\mu, \Gamma} \left( \mathcal{X} \right), \mathbb{R} \right), 
\end{equation}  
in order that we obtain 
\begin{eqnarray} 
\label{normalisedruelle}
\left( \mathcal{L}_{F^{*}} 1 \right) \left( \mathfrak{X}^{+} \left( x_{0}; \boldsymbol{\alpha} \right) \right) & = & \sum_{\mathfrak{X}^{+} \left( x^{*}; \boldsymbol{\alpha}^{*} \right)\ \in\ \sigma^{-1} \left( \mathfrak{X}^{+} \left( x_{0}; \boldsymbol{\alpha} \right) \right)} \hspace{-1cm} e^{ \left( F - \log \left( H \circ \sigma \right) + \log H - \log \Lambda \right) \left( \mathfrak{X}^{+} \left( x^{*}; \boldsymbol{\alpha}^{*} \right) \right)} 1 \left( \mathfrak{X}^{+} \left( x^{*}; \boldsymbol{\alpha}^{*} \right) \right) \nonumber \\ 
\vspace{+10pt} \nonumber \\ 
& = & \frac{\left( \mathcal{L}_{F} H \right) \left( \mathfrak{X}^{+} \left( x_{0}; \boldsymbol{\alpha} \right) \right)}{\Lambda H \left( \mathfrak{X}^{+} \left( x_{0}; \boldsymbol{\alpha} \right) \right)}\ \ =\ \ 1. 
\end{eqnarray} 

\section{Equicontinuity of $\left\{ \mathcal{L}_{F^{*}}^{\nu} \left( \cdot \right) \right\}$} 

Since the holomorphic correspondence $\Gamma$ is expansive and satisfies the conditions in Definition \eqref{expcorr}, it is easy to verify that whenever we take two points $\mathfrak{X}^{+} \left( x_{0}; \boldsymbol{\alpha} \right)$ and $\mathfrak{X}^{+} \left( y_{0}; \boldsymbol{\beta} \right)$ in $\mathscr{P}^{\mu, \Gamma} \left( \mathcal{X} \right)$, there exist $\mathfrak{X}^{+} \left( x_{-1}; \gamma \boldsymbol{\alpha} \right) \in \sigma^{-1} \left( \mathfrak{X}^{+} \left( x_{0}; \boldsymbol{\alpha} \right) \right)$ and $\mathfrak{X}^{+} \left( y_{-1}; \gamma \boldsymbol{\beta} \right) \in \sigma^{-1} \left( \mathfrak{X}^{+} \left( y_{0}; \boldsymbol{\beta} \right) \right)$ for some $\gamma \in \left\{ 1, 2, \cdots, N \right\}$ such that 
\begin{equation} 
\label{inflamb} 
d_{\mathscr{P}^{\mu, \Gamma} \left( \mathcal{X} \right)} \left( \mathfrak{X}^{+} \left( x_{-1}; \gamma \boldsymbol{\alpha} \right), \mathfrak{X}^{+} \left( y_{-1}; \gamma \boldsymbol{\beta} \right) \right)\ \ <\ \ \frac{1}{\lambda}\ d_{\mathscr{P}^{\mu, \Gamma} \left( \mathcal{X} \right)} \left( \mathfrak{X}^{+} \left( x_{0}; \boldsymbol{\alpha} \right), \mathfrak{X}^{+} \left( y_{0}; \boldsymbol{\alpha} \right) \right), 
\end{equation} 
where $\lambda > 1$ is the expansivity constant of the correspondence, as defined in condition (2) of Definition \eqref{expcorr}. 

We now define a quantity called $\mathcal{S} \left( \mathfrak{X}^{+} \left( x_{0}; \boldsymbol{\alpha} \right), \mathfrak{X}^{+} \left( y_{0}; \boldsymbol{\beta} \right) \right)$ analogous to the one defined in Section \eqref{expcorrsec}, in Equation \eqref{sxy}. Here, $\mathfrak{X}^{+} \left( x_{0}; \boldsymbol{\alpha} \right)$ and $\mathfrak{X}^{+} \left( y_{0}; \boldsymbol{\beta} \right)$ are arbitrary points in $\mathscr{P}^{\mu, \Gamma} \left( \mathcal{X} \right)$ while $F^{*} \in \mathcal{C} \left( \mathscr{P}^{\mu, \Gamma} \left( \mathcal{X} \right), \mathbb{R} \right)$ is normalised, as described above. 
\begin{eqnarray*} 
\mathcal{S} \left( \mathfrak{X}^{+} \left( x_{0}; \boldsymbol{\alpha} \right), \mathfrak{X}^{+} \left( y_{0}; \boldsymbol{\beta} \right) \right) & = & \sup_{\nu\, \ge\, 1}\ \ \sup_{\boldsymbol{\gamma}\, \in\, {\rm Cyl}_{\nu}}\ \ \Big\{ F^{*}_{\nu} \left( \mathfrak{X}^{+} \left( x_{- \nu}; \boldsymbol{\gamma \alpha} \right) \right)\ -\ F^{*}_{\nu} \left( \mathfrak{X}^{+} \left( y_{- \nu}; \boldsymbol{\gamma \beta} \right) \right) \nonumber \\ 
& : & \mathfrak{X}^{+} \left( x_{- \nu}; \boldsymbol{\gamma \alpha} \right) \in \sigma^{- \nu} \left( \mathfrak{X}^{+} \left( x_{0}; \boldsymbol{\alpha} \right) \right), \\ 
& & \mathfrak{X}^{+} \left( y_{- \nu}; \boldsymbol{\gamma \beta} \right) \in \sigma^{- \nu} \left( \mathfrak{X}^{+} \left( y_{0}; \boldsymbol{\beta} \right) \right)\ \text{satisfying} \\ 
\frac{d_{\overline{\mathbb{C}}} \left(x, y\right)}{\lambda^{j}} & > & d_{\overline{\mathbb{C}}} \left( \Pi_{j}^{+} \left( \mathfrak{X}^{+} \left( x_{- \nu}; \boldsymbol{\gamma \alpha} \right) \right), \Pi_{j}^{+} \left( \mathfrak{X}^{+} \left( y_{- \nu}; \boldsymbol{\gamma \beta} \right) \right) \right)\ \forall 1 \le j \le \nu \Big\}. 
\end{eqnarray*} 

We now prove that $\left\{ \mathcal{L}_{F^{*}}^{\nu} \left( G \right) \right\}_{\nu\, \ge\, 1}$ is an equicontinuous family for any $G \in \mathcal{C} \left( \mathscr{P}^{\mu, \Gamma} \left( \mathcal{X} \right), \mathbb{R} \right)$. 

\begin{theorem} 
$\left\{ \mathcal{L}_{F^{*}}^{\nu} \left( G \right) \right\}_{\nu\, \ge\, 1}$ is an equicontinuous family for any $G \in \mathcal{C} \left( \mathscr{P}^{\mu, \Gamma} \left( \mathcal{X} \right), \mathbb{R} \right)$. 
\end{theorem}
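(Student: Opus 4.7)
The plan is to mimic the classical argument for equicontinuity of iterates of a normalised Ruelle operator, adapting it to the orbit space $\mathscr{P}^{\mu, \Gamma} \left( \mathcal{X} \right)$. Three ingredients will do the work: expansivity, which gives a contracting pairing between preimages at each level; the normalisation $\mathcal{L}_{F^{*}}(1) = 1$ from Equation \eqref{normalisedruelle}, which ensures that $\mathcal{L}_{F^{*}}^{\nu}(1) \equiv 1$ and so the exponential weights at every level sum to $1$; and the quantity $\mathcal{S}\left(\mathfrak{X}^{+} \left( x_{0}; \boldsymbol{\alpha} \right), \mathfrak{X}^{+} \left( y_{0}; \boldsymbol{\beta} \right)\right)$ that will bound the Birkhoff sums of $F^{*}$ along paired preimages and will vanish when the base points coalesce.

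Fix $G \in \mathcal{C} \left( \mathscr{P}^{\mu, \Gamma} \left( \mathcal{X} \right), \mathbb{R} \right)$ and $\epsilon > 0$, and consider two points $\mathfrak{X}^{+} \left( x_{0}; \boldsymbol{\alpha} \right)$ and $\mathfrak{X}^{+} \left( y_{0}; \boldsymbol{\beta} \right)$ in $\mathscr{P}^{\mu, \Gamma} \left( \mathcal{X} \right)$. Iterating Equation \eqref{inflamb}, for each preimage $\mathfrak{X}^{+} \left( x_{-\nu}; \boldsymbol{\gamma \alpha} \right) \in \sigma^{-\nu}\left( \mathfrak{X}^{+} \left( x_{0}; \boldsymbol{\alpha} \right) \right)$ we select a unique companion $\mathfrak{X}^{+} \left( y_{-\nu}; \boldsymbol{\gamma \beta} \right) \in \sigma^{-\nu}\left( \mathfrak{X}^{+} \left( y_{0}; \boldsymbol{\beta} \right) \right)$ with the same $\nu$-long prefix $\boldsymbol{\gamma}$ and satisfying $d_{\mathscr{P}^{\mu, \Gamma} \left( \mathcal{X} \right)}\left(\mathfrak{X}^{+} \left( x_{-\nu}; \boldsymbol{\gamma \alpha} \right), \mathfrak{X}^{+} \left( y_{-\nu}; \boldsymbol{\gamma \beta} \right)\right) < \lambda^{-\nu}\, d_{\mathscr{P}^{\mu, \Gamma} \left( \mathcal{X} \right)}\left(\mathfrak{X}^{+} \left( x_{0}; \boldsymbol{\alpha} \right), \mathfrak{X}^{+} \left( y_{0}; \boldsymbol{\beta} \right)\right)$. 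Because the generic backward degree is constantly $d_{1}$ on $\mathcal{X}$, this assignment can be made into a bijection between $\sigma^{-\nu}\left( \mathfrak{X}^{+} \left( x_{0}; \boldsymbol{\alpha} \right) \right)$ and $\sigma^{-\nu}\left( \mathfrak{X}^{+} \left( y_{0}; \boldsymbol{\beta} \right) \right)$.

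Under this pairing I expand $\mathcal{L}_{F^{*}}^{\nu}(G)\left(\mathfrak{X}^{+} \left( x_{0}; \boldsymbol{\alpha} \right)\right) - \mathcal{L}_{F^{*}}^{\nu}(G)\left(\mathfrak{X}^{+} \left( y_{0}; \boldsymbol{\beta} \right)\right)$ as a sum over pairs of $e^{F^{*}_{\nu}(x_{-\nu})} G(x_{-\nu}) - e^{F^{*}_{\nu}(y_{-\nu})} G(y_{-\nu})$, and split each term by the triangle inequality into a piece $e^{F^{*}_{\nu}(x_{-\nu})} \bigl(G(x_{-\nu}) - G(y_{-\nu})\bigr)$ and a piece $G(y_{-\nu}) \bigl(e^{F^{*}_{\nu}(x_{-\nu})} - e^{F^{*}_{\nu}(y_{-\nu})}\bigr)$. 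For the first piece, uniform continuity of $G$ on the compact space $\mathscr{P}^{\mu, \Gamma} \left( \mathcal{X} \right)$ gives $|G(x_{-\nu}) - G(y_{-\nu})| < \epsilon$ uniformly in $\nu$ once the base points are close enough, after which the normalisation $\sum e^{F^{*}_{\nu}(x_{-\nu})} = \mathcal{L}_{F^{*}}^{\nu}(1)\left(\mathfrak{X}^{+} \left( x_{0}; \boldsymbol{\alpha} \right)\right) = 1$ bounds the aggregate contribution by $\epsilon$. For the second piece, the elementary inequality $|e^{a} - e^{b}| \le e^{\max\{a, b\}} |a - b|$ combined with $|F^{*}_{\nu}(x_{-\nu}) - F^{*}_{\nu}(y_{-\nu})| \le \mathcal{S}\left(\mathfrak{X}^{+} \left( x_{0}; \boldsymbol{\alpha} \right), \mathfrak{X}^{+} \left( y_{0}; \boldsymbol{\beta} \right)\right)$ and again the normalisation $\sum e^{F^{*}_{\nu}(y_{-\nu})} = 1$ yields a bound of the order $\|G\|_{\infty} \cdot \mathcal{S} \cdot e^{\mathcal{S}}$, uniformly in $\nu$.

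The main obstacle is therefore to verify that $\mathcal{S}\left(\mathfrak{X}^{+} \left( x_{0}; \boldsymbol{\alpha} \right), \mathfrak{X}^{+} \left( y_{0}; \boldsymbol{\beta} \right)\right)$ tends to zero as $d_{\mathscr{P}^{\mu, \Gamma} \left( \mathcal{X} \right)}\left(\mathfrak{X}^{+} \left( x_{0}; \boldsymbol{\alpha} \right), \mathfrak{X}^{+} \left( y_{0}; \boldsymbol{\beta} \right)\right) \to 0$, uniformly over $\nu$ and over the choice of preimage branches. This reduces to the same style of estimate that follows Equation \eqref{sxy}: one writes $F^{*}_{\nu}(x_{-\nu}) - F^{*}_{\nu}(y_{-\nu})$ as a telescoping sum whose $j$-th term is bounded by the modulus of continuity $\omega_{k}$ of $F^{*}$ at scale $\lambda^{-(\nu - j)} d_{\mathscr{P}^{\mu, \Gamma} \left( \mathcal{X} \right)}\left(\mathfrak{X}^{+} \left( x_{0}; \boldsymbol{\alpha} \right), \mathfrak{X}^{+} \left( y_{0}; \boldsymbol{\beta} \right)\right)$, so that the full sum is controlled by the tail $\sum_{k \ge k_{0}} \omega_{k}(F^{*})$ with $k_{0}$ growing as the two base orbits get closer. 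Here the assumption that $F \in \mathcal{C}^{\alpha}$ is crucial, since one must check that the modification in Equation \eqref{normalF} preserves the summability $\sum_{k} \omega_{k}(F^{*}) < \infty$; this follows because $\log H$ is continuous and $H$ is bounded away from $0$ on the compact space $\mathscr{P}^{\mu, \Gamma} \left( \mathcal{X} \right)$, while $\sigma$ distorts the metric by at most the factor $2$. Granting this, combining the two pieces produces an $\epsilon$-estimate independent of $\nu$, which is precisely the equicontinuity of $\left\{\mathcal{L}_{F^{*}}^{\nu}(G)\right\}_{\nu \ge 1}$.
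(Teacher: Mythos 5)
Your proposal follows essentially the same route as the paper: pair preimages branch-by-branch via the expansivity estimate of Equation \eqref{inflamb}, split the difference $\mathcal{L}_{F^{*}}^{\nu}(G)\bigl(\mathfrak{X}^{+} ( x_{0}; \boldsymbol{\alpha} )\bigr) - \mathcal{L}_{F^{*}}^{\nu}(G)\bigl(\mathfrak{X}^{+} ( y_{0}; \boldsymbol{\beta} )\bigr)$ by adding and subtracting a cross term, control the weight difference by $\mathcal{S}$, and control the $G$ difference by its modulus of continuity, while the normalisation $\mathcal{L}_{F^{*}}^{\nu}(1) \equiv 1$ from Equation \eqref{normalisedruelle} absorbs the sums of exponentials; the choice of which factor gets evaluated at $x$ and which at $y$, and the use of $|e^{a}-e^{b}| \le e^{\max\{a,b\}}|a-b|$ in place of $|e^{\mathcal{S}}-1|$, are cosmetic. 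One step in your final paragraph, however, is not quite right as stated: you claim $\sum_{k}\omega_{k}(F^{*})<\infty$ follows because ``$\log H$ is continuous and $H$ is bounded away from $0$''. Mere continuity only gives $\omega_{k}(\log H)\to 0$, which does not yield summability of the tail (and without it one cannot conclude $\mathcal{S}\to 0$ from the telescoping bound $\sum_{l>k}\omega_{l}(F^{*})$). The correct justification is that $h$ belongs to the cone $\Omega(\mathcal{X},\mathbb{R})$ from Claim \eqref{claimtwo}, which gives the explicit oscillation bound $|\log h(x)-\log h(y)|\le\max\{\mathcal{S}(x,y),\mathcal{S}(y,x)\}$ at each scale, inherited from $f\in\mathcal{C}^{\alpha}(\mathcal{X},\mathbb{R})$; the paper itself is silent on this point and simply asserts $\mathcal{S}\to 0$, so you are right to flag it, but the continuity argument you offer does not close the gap.
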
 

\begin{proof}
Let $\mathfrak{X}^{+} \left( x_{0}; \boldsymbol{\alpha} \right)$ and $\mathfrak{X}^{+} \left( y_{0}; \boldsymbol{\beta} \right)$ be any two arbitrary points in $\mathscr{P}^{\mu, \Gamma} \left( \mathcal{X} \right)$ such that 
\begin{equation} 
\label{dist} 
d_{\mathscr{P}^{\mu, \Gamma} \left( \mathcal{X} \right)} \left( \mathfrak{X}^{+} \left( x_{0}; \boldsymbol{\alpha} \right), \mathfrak{X}^{+} \left( y_{0}; \boldsymbol{\beta} \right) \right)\ \ \le\ \ \frac{1}{\lambda^{k - 1}}\ \text{for some fixed}\ k \in \mathbb{Z}_{+}. 
\end{equation} 

To every pair of points $\mathfrak{X}^{+} \left( x_{0}; \boldsymbol{\alpha} \right)$ and $\mathfrak{X}^{+} \left( y_{0}; \boldsymbol{\beta} \right)$ in $\mathscr{P}^{\mu, \Gamma} \left( \mathcal{X} \right)$, we associate a unique pair of points given by $\mathfrak{X}^{+} \left( x_{- j}; \boldsymbol{\gamma \alpha} \right) \in \sigma^{- j} \left( \mathfrak{X}^{+} \left( x_{0}; \boldsymbol{\alpha} \right) \right)$ and $\mathfrak{X}^{+} \left( y_{- j}; \boldsymbol{\gamma \beta} \right) \in \sigma^{- j} \left( \mathfrak{X}^{+} \left( y_{0}; \boldsymbol{\beta} \right) \right)$ for some $\boldsymbol{\gamma} \in {\rm Cyl}_{j}$ that satisfies  
\[ d_{\mathscr{P}^{\mu, \Gamma} \left( \mathcal{X} \right)} \left( \mathfrak{X}^{+} \left( x_{-j}; \boldsymbol{\gamma \alpha} \right), \mathfrak{X}^{+} \left( y_{-j}; \boldsymbol{\gamma \beta} \right) \right)\ \ <\ \ \frac{1}{\lambda^{j}}\ d_{\mathscr{P}^{\mu, \Gamma} \left( \mathcal{X} \right)} \left( \mathfrak{X}^{+} \left( x_{0}; \boldsymbol{\alpha} \right), \mathfrak{X}^{+} \left( y_{0}; \boldsymbol{\alpha} \right) \right), \] 
for every $1 \le j \le \nu$. Observe that such an association is possible due to Equation \eqref{inflamb}. Then, for such associated pairs of points, consider 
\begin{eqnarray*} 
& & \left| \mathcal{L}_{F^{*}}^{\nu} (G) \left( \mathfrak{X}^{+} \left( x_{0}; \boldsymbol{\alpha} \right) \right) - \mathcal{L}_{F^{*}}^{\nu} (G) \left( \mathfrak{X}^{+} \left( y_{0}; \boldsymbol{\beta} \right) \right) \right| \\ 
\vspace{+10pt} \\ 
& = & \left| \sum_{\boldsymbol{\gamma}\, \in\, {\rm Cyl}_{\nu}}\ \sum_{\mathfrak{X}^{+} \left( x_{- \nu}; \boldsymbol{\gamma \alpha} \right)\ \in\ \sigma^{- \nu} \left( \mathfrak{X}^{+} \left( x_{0}; \boldsymbol{\alpha} \right) \right)} \hspace{-1cm} e^{ F^{*}_{\nu} \left( \mathfrak{X}^{+} \left( x_{- \nu}; \boldsymbol{\gamma \alpha} \right) \right)} G \left( \mathfrak{X}^{+} \left( x_{- \nu}; \boldsymbol{\gamma \alpha} \right) \right) \right. \\ 
& & \left. \hspace{+3cm} -\ \sum_{\boldsymbol{\gamma}\, \in\, {\rm Cyl}_{\nu}}\ \sum_{\mathfrak{X}^{+} \left( y_{- \nu}; \boldsymbol{\gamma \beta} \right)\ \in\ \sigma^{- \nu} \left( \mathfrak{X}^{+} \left( y_{0}; \boldsymbol{\beta} \right) \right)} \hspace{-1cm} e^{ F^{*}_{\nu} \left( \mathfrak{X}^{+} \left( y_{- \nu}; \boldsymbol{\gamma \beta} \right) \right)} G \left( \mathfrak{X}^{+} \left( y_{- \nu}; \boldsymbol{\gamma \beta} \right) \right) \right| \\ 
\vspace{+10pt} \\ 
& \le & \left| \sum_{\boldsymbol{\gamma}\, \in\, {\rm Cyl}_{\nu}} \sum_{\substack{\mathfrak{X}^{+} \left( x_{- \nu}; \boldsymbol{\gamma \alpha} \right)\; \in\; \sigma^{- \nu} \left( \mathfrak{X}^{+} \left( x_{0}; \boldsymbol{\alpha} \right) \right) \\ \mathfrak{X}^{+} \left( y_{- \nu}; \boldsymbol{\gamma \beta} \right)\; \in\; \sigma^{- \nu} \left( \mathfrak{X}^{+} \left( y_{0}; \boldsymbol{\beta} \right) \right)}} \hspace{-1cm} \left[ e^{ F^{*}_{\nu} \left( \mathfrak{X}^{+} \left( x_{- \nu}; \boldsymbol{\gamma \alpha} \right) \right)} - e^{ F^{*}_{\nu} \left( \mathfrak{X}^{+} \left( y_{- \nu}; \boldsymbol{\gamma \beta} \right) \right)} \right] G \left( \mathfrak{X}^{+} \left( x_{- \nu}; \boldsymbol{\gamma \alpha} \right) \right) \right| \\ 
& + & \left| \sum_{\boldsymbol{\gamma}\, \in\, {\rm Cyl}_{\nu}} \sum_{\substack{\mathfrak{X}^{+} \left( x_{- \nu}; \boldsymbol{\gamma \alpha} \right)\; \in\; \sigma^{- \nu} \left( \mathfrak{X}^{+} \left( x_{0}; \boldsymbol{\alpha} \right) \right) \\ \mathfrak{X}^{+} \left( y_{- \nu}; \boldsymbol{\gamma \beta} \right)\; \in\; \sigma^{- \nu} \left( \mathfrak{X}^{+} \left( y_{0}; \boldsymbol{\beta} \right) \right)}} \hspace{-1cm} e^{ F^{*}_{\nu} \left( \mathfrak{X}^{+} \left( y_{- \nu}; \boldsymbol{\gamma \beta} \right) \right)} \left[ G \left( \mathfrak{X}^{+} \left( x_{- \nu}; \boldsymbol{\gamma \alpha} \right) \right) - G \left( \mathfrak{X}^{+} \left( y_{- \nu}; \boldsymbol{\gamma \beta} \right) \right) \right] \right| \\ 
\vspace{+10pt} \\ 
& \le & \left\| G \right\| \left| \sum_{\boldsymbol{\gamma}\, \in\, {\rm Cyl}_{\nu}} \sum_{\substack{\mathfrak{X}^{+} \left( x_{- \nu}; \boldsymbol{\gamma \alpha} \right)\; \in\; \sigma^{- \nu} \left( \mathfrak{X}^{+} \left( x_{0}; \boldsymbol{\alpha} \right) \right) \\ \mathfrak{X}^{+} \left( y_{- \nu}; \boldsymbol{\gamma \beta} \right)\; \in\; \sigma^{- \nu} \left( \mathfrak{X}^{+} \left( y_{0}; \boldsymbol{\beta} \right) \right)}} \hspace{-1cm} \left[ e^{ F^{*}_{\nu} \left( \mathfrak{X}^{+} \left( x_{- \nu}; \boldsymbol{\gamma \alpha} \right) \right)} - e^{ F^{*}_{\nu} \left( \mathfrak{X}^{+} \left( y_{- \nu}; \boldsymbol{\gamma \beta} \right) \right)} \right] \right| \\ 
& + & \omega_{k + \nu} \left(G\right) \left| \sum_{\boldsymbol{\gamma}\, \in\, {\rm Cyl}_{\nu}} \sum_{\mathfrak{X}^{+} \left( y_{- \nu}; \boldsymbol{\gamma \beta} \right)\; \in\; \sigma^{- \nu} \left( \mathfrak{X}^{+} \left( y_{0}; \boldsymbol{\beta} \right) \right)} \hspace{-1cm} e^{ F^{*}_{\nu} \left( \mathfrak{X}^{+} \left( y_{- \nu}; \boldsymbol{\gamma \beta} \right) \right)} \right|, 
\end{eqnarray*} 
where 
\[ \omega_{k} (G)\ :=\ \sup \left\{ \left| G \left( \mathfrak{X}^{+} \left( x_{0}; \boldsymbol{\alpha} \right) \right) - G \left( \mathfrak{X}^{+} \left( y_{0}; \boldsymbol{\beta} \right) \right) \right| : d_{\mathscr{P}^{\mu, \Gamma} \left( \mathcal{X} \right)} \left( \mathfrak{X}^{+} \left( x_{0}; \boldsymbol{\alpha} \right), \mathfrak{X}^{+} \left( y_{0}; \boldsymbol{\beta} \right) \right) \le \frac{1}{\lambda^{k - 1}} \right\}. \]  
Thus, 
\begin{eqnarray*} 
& & \left| \mathcal{L}_{F^{*}}^{\nu} (G) \left( \mathfrak{X}^{+} \left( x_{0}; \boldsymbol{\alpha} \right) \right) - \mathcal{L}_{F^{*}}^{\nu} (G) \left( \mathfrak{X}^{+} \left( y_{0}; \boldsymbol{\beta} \right) \right) \right| \\ 
\vspace{+10pt} \\ 
& \le & \left| \sum_{\boldsymbol{\gamma}\, \in\, {\rm Cyl}_{\nu}} \sum_{\mathfrak{X}^{+} \left( y_{- \nu}; \boldsymbol{\gamma \beta} \right)\; \in\; \sigma^{- \nu} \left( \mathfrak{X}^{+} \left( y_{0}; \boldsymbol{\beta} \right) \right)} \hspace{-1cm} e^{ F^{*}_{\nu} \left( \mathfrak{X}^{+} \left( y_{- \nu}; \boldsymbol{\gamma \beta} \right) \right)} \right| \left[ \left\| G \right\| \left| e^{\mathcal{S} \left( \mathfrak{X}^{+} \left( x_{0}; \boldsymbol{\alpha} \right), \mathfrak{X}^{+} \left( y_{0}; \boldsymbol{\beta} \right) \right)} - 1 \right|\ +\ \omega_{k + \nu} \left(G\right) \right] \\ 
& = & \left\| G \right\| \left| e^{\mathcal{S} \left( \mathfrak{X}^{+} \left( x_{0}; \boldsymbol{\alpha} \right), \mathfrak{X}^{+} \left( y_{0}; \boldsymbol{\beta} \right) \right)} - 1 \right|\ +\ \omega_{k} \left(G\right). 
\end{eqnarray*} 

Therefore, for any pair of points $\mathfrak{X}^{+} \left( x_{0}; \boldsymbol{\alpha} \right), \mathfrak{X}^{+} \left( y_{0}; \boldsymbol{\beta} \right) \in \mathscr{P}^{\mu, \Gamma} \left( \mathcal{X} \right)$, that satisfies Equation \eqref{dist}, we have $\mathcal{S} \left(\mathfrak{X}^{+} \left( x_{0}; \boldsymbol{\alpha} \right), \mathfrak{X}^{+} \left( y_{0}; \boldsymbol{\beta} \right) \right) \to 0$ as $\nu \to \infty$. Thus, the family $\left\{ \mathcal{L}_{F^{*}}^{\nu} \left( G \right) \right\}_{\nu\, \ge\, 1}$ is equicontinuous. 
\end{proof} 

\section{Density of preimages in $\mathscr{P}^{\mu, \Gamma} \left( \mathcal{X} \right)$} 

Since the family $\left\{ \mathcal{L}_{F^{*}}^{\nu} \left( G \right) \right\}$ is equicontinuous for any $G \in \mathcal{C} \left( \mathscr{P}^{\mu, \Gamma} \left( \mathcal{X} \right), \mathbb{R} \right)$, there exists a subsequence $\nu_{k} \in \mathbb{Z}_{+}$ such that $\left\{ \mathcal{L}_{F^{*}}^{\nu_{k}} \left( G \right) \right\}_{k\, \ge\, 1}$ is uniformly convergent. Say that the subsequence converges to the limit $G_{*} \in \mathcal{C} \left( \mathscr{P}^{\mu, \Gamma} \left( \mathcal{X} \right), \mathbb{R} \right)$. Since $\mathcal{L}_{F^{*}}^{\nu} \left( 1 \right) \equiv 1$ (from Equation \eqref{normalisedruelle}), we have 
\[ \sup G\ \ge\ \sup \mathcal{L}_{F^{*}} \left( G \right)\ \ge\ \cdots\ \ge\ \sup \mathcal{L}_{F^{*}}^{\nu} \left( G \right)\ \ge\ \cdots\ \ge\ \sup G_{*}, \] 
where all the suprema in the above inequality is taken over points in $\mathscr{P}^{\mu, \Gamma} \left( \mathcal{X} \right)$. Hence, as $k \in \mathbb{Z}_{+}$ grows beyond a threshold, we can find an $\epsilon_{k} > 0$ such that 
\[ G_{*} \left( \mathfrak{X}^{+} \left( x_{0}; \boldsymbol{\alpha} \right) \right) \ge \left( \mathcal{L}_{F^{*}}^{\nu_{k}} G \right) \left( \mathfrak{X}^{+} \left( x_{0}; \boldsymbol{\alpha} \right) \right) - \epsilon_{k}\ \  \forall \mathfrak{X}^{+} \left( x_{0}; \boldsymbol{\alpha} \right) \in \mathscr{P}^{\mu, \Gamma} \left( \mathcal{X} \right). \] 
Thus, for every $\nu \in \mathbb{Z}_{+}$, we have $\sup \mathcal{L}_{F^{*}}^{\nu} G_{*} \ge \sup G_{*}$ on $\mathscr{P}^{\mu, \Gamma} \left( \mathcal{X} \right)$. Since $\mathscr{P}^{\mu, \Gamma} \left( \mathcal{X} \right)$ is a compact metric space, there exist points, say $\mathfrak{X}^{+} \left( x_{0}; \boldsymbol{\alpha} \right)$ and $\mathfrak{X}^{+} \left( y_{0}; \boldsymbol{\beta} \right)$ in $\mathscr{P}^{\mu, \Gamma} \left( \mathcal{X} \right)$ such that $\sup G_{*} = G_{*} \left( \mathfrak{X}^{+} \left( x_{0}; \boldsymbol{\alpha} \right) \right)$ and $\sup \mathcal{L}_{F^{*}}^{\nu} G_{*} = \left( \mathcal{L}_{F^{*}}^{\nu} G_{*} \right) \left( \mathfrak{X}^{+} \left( y_{0}; \boldsymbol{\beta} \right) \right)$. Then, 
\begin{eqnarray} 
\label{usedensity} 
G_{*} \left( \mathfrak{X}^{+} \left( x_{0}; \boldsymbol{\alpha} \right) \right) & \le & \left( \mathcal{L}_{F^{*}}^{\nu} G_{*} \right) \left( \mathfrak{X}^{+} \left( y_{0}; \boldsymbol{\beta} \right) \right) \nonumber \\ 
& = & \sum_{\boldsymbol{\gamma}\, \in\, {\rm Cyl}_{\nu}}\ \sum_{\mathfrak{X}^{+} \left( y_{- \nu}; \boldsymbol{\gamma \beta} \right)\ \in\ \sigma^{- \nu} \left( \mathfrak{X}^{+} \left( y_{0}; \boldsymbol{\beta} \right) \right)} \hspace{-1cm} e^{ F^{*}_{\nu} \left( \mathfrak{X}^{+} \left( y_{- \nu}; \boldsymbol{\gamma \beta} \right) \right)} G_{*} \left( \mathfrak{X}^{+} \left( y_{- \nu}; \boldsymbol{\gamma \beta} \right) \right) \nonumber \\ 
& \le & G_{*} \left( \mathfrak{X}^{+} \left( x_{0}; \boldsymbol{\alpha} \right) \right) \sum_{\boldsymbol{\gamma}\, \in\, {\rm Cyl}_{\nu}}\ \sum_{\mathfrak{X}^{+} \left( y_{- \nu}; \boldsymbol{\gamma \beta} \right)\ \in\ \sigma^{- \nu} \left( \mathfrak{X}^{+} \left( y_{0}; \boldsymbol{\beta} \right) \right)} \hspace{-1cm} e^{ F^{*}_{\nu} \left( \mathfrak{X}^{+} \left( y_{- \nu}; \boldsymbol{\gamma \beta} \right) \right)} \nonumber \\ 
& = & G_{*} \left( \mathfrak{X}^{+} \left( x_{0}; \boldsymbol{\alpha} \right) \right). 
\end{eqnarray} 
Since the extreme quantities in the above inequality are one and the same, it is necessary that everything in between should also be equal to one another. 

We now state and prove a useful lemma regarding the density of backward images of points in $\mathscr{P}^{\mu, \Gamma} \left( \mathcal{X} \right)$ under the shift map $\sigma$. 

\begin{lemma} 
\label{density} 
The collection of backward images $\bigcup\limits_{\nu\, \ge\, 1} \left\{ \mathfrak{X}^{+} \left( y_{- \nu}; \boldsymbol{\gamma \beta} \right) : \boldsymbol{\gamma} \in {\rm Cyl}_{\nu} \right\}$ of any point $\mathfrak{X}^{+} \left( y_{0}; \boldsymbol{\beta} \right)$ in $\mathscr{P}^{\mu, \Gamma} \left( \mathcal{X} \right)$ is dense in $\mathscr{P}^{\mu, \Gamma} \left( \mathcal{X} \right)$. 
\end{lemma}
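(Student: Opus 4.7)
Given an arbitrary target $\mathfrak{X}^{+} (x_{0}; \boldsymbol{\alpha}) \in \mathscr{P}^{\mu, \Gamma}(\mathcal{X})$ and $\epsilon > 0$, the plan is to construct a $\sigma^{-\nu}$-preimage of $\mathfrak{X}^{+} (y_{0}; \boldsymbol{\beta})$ that approximates $\mathfrak{X}^{+} (x_{0}; \boldsymbol{\alpha})$ in the metric of Equation \eqref{metriconinfspace}. By the very definition of that metric, it suffices to fix $K \in \mathbb{Z}_{+}$ with $1/2^{K} < \epsilon$ and produce $\nu > K$ together with $\boldsymbol{\gamma} \in {\rm Cyl}_{\nu}$ so that $\gamma_{j} = \alpha_{j}$ for $1 \le j \le K$ and $d_{\overline{\mathbb{C}}}(y_{-\nu + j}, x_{j})$ is arbitrarily small for $0 \le j \le K$; the tail of the forward path $\mathfrak{X}^{+} (y_{-\nu}; \boldsymbol{\gamma \beta})$ is then automatically within $1/2^{K}$ of the tail of the target.

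The construction combines two ingredients. First, by the Dinh--Sibony equidistribution theorem (quoted before Section \eqref{trop}) together with the backward-invariance of $\mathcal{X}$, the set $\bigcup_{m \ge 1} \{ w \in \mathcal{X} : (w, y_{0}) \in \Gamma^{\circ m} \}$ of backward iterates of $y_{0}$ is dense in $\mathcal{X}$. Hence for any $\delta > 0$, one may fix $m \in \mathbb{Z}_{+}$ together with $w \in \mathcal{X}$ and an $m$-step backward orbit $y_{0} \leftarrow \cdots \leftarrow w$ along some combinatorial data such that $d_{\overline{\mathbb{C}}}(w, x_{K}) < \delta$. Second, using the expansivity of $\Gamma$ on $\mathcal{X}$ (Definition \eqref{expcorr}), I pull back $w$ further along the prescribed data $\alpha_{K}, \alpha_{K - 1}, \ldots, \alpha_{1}$: since $(x_{K - 1}, x_{K}) \in \Gamma_{\alpha_{K}}$ and $w$ is close to $x_{K}$, expansivity yields $w_{-1} \in \mathcal{X}$ with $(w_{-1}, w) \in \Gamma_{\alpha_{K}}$ and $\lambda\, d_{\overline{\mathbb{C}}}(x_{K - 1}, w_{-1}) < d_{\overline{\mathbb{C}}}(x_{K}, w)$; iterating $K$ times produces $w_{-K} \in \mathcal{X}$ with $d_{\overline{\mathbb{C}}}(x_{0}, w_{-K}) < \delta / \lambda^{K}$ along valid varietal transitions through $\alpha_{1}, \ldots, \alpha_{K}$. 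Setting $\nu = K + m$, $y_{-\nu} = w_{-K}$, and letting $\boldsymbol{\gamma}$ be the concatenation of $(\alpha_{1}, \ldots, \alpha_{K})$ with the combinatorial data of the chosen backward orbit from $y_{0}$ to $w$, the resulting path $\mathfrak{X}^{+} (y_{-\nu}; \boldsymbol{\gamma \beta})$ lies in $\mathscr{P}^{\mu, \Gamma}(\mathcal{X})$ (by the expansivity construction and the backward-invariance of $\mathcal{X}$ on the inserted segment, and by hypothesis on the tail) and differs from $\mathfrak{X}^{+} (x_{0}; \boldsymbol{\alpha})$ by at most $\delta / \lambda^{K - j}$ at the $j$-th spatial coordinate for $0 \le j \le K$, with matching combinatorial data throughout; choosing $\delta$ small enough completes the approximation.

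The main obstacle is the point-level density statement above when $y_{0}$ happens to lie in the polar exceptional set $\mathcal{E}$ of the Dinh--Sibony theorem: the equidistribution of preimages is proved only off $\mathcal{E}$, so density of the backward iterates of $y_{0}$ itself is not automatic. The workaround I expect to use is to approximate $y_{0}$ by some $\widetilde{y} \in \overline{\mathbb{C}} \setminus \mathcal{E}$---available because $\mathcal{E}$ is polar and hence has empty interior---use the resulting dense family of backward iterates of $\widetilde{y}$ to pick candidates near $x_{K}$, and then invoke expansivity once more (now with $y_{0}$ close to $\widetilde{y}$) to transfer each such iterate of $\widetilde{y}$ to an actual backward iterate of $y_{0}$ in $\mathcal{X}$. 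Coordinating the tolerances so that this transfer and the subsequent pull-back along $\boldsymbol{\alpha}$ both close up within the required precision is the technically delicate part of the argument.
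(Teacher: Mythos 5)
Your plan is essentially the paper's own proof: fix $n$ with $1/2^{n} < \epsilon$, use density of backward $\Gamma$-orbits of $y_{0}$ in $\mathcal{X}$ to find $y_{-m}$ near $x_{n}$, pull $y_{-m}$ back through $\alpha_{n}, \ldots, \alpha_{1}$ via expansivity (so that each error is contracted by $\lambda^{-1}$ at every step), and concatenate $\boldsymbol{\alpha}'\boldsymbol{\gamma}\boldsymbol{\beta}$ to land within $\epsilon$ of the target in the metric $d_{\mathscr{P}^{\mu,\Gamma}(\mathcal{X})}$. The one place where you go beyond the paper is your worry about $y_{0}$ possibly lying in the polar exceptional set $\mathcal{E}$: the paper simply asserts (``Recall that $\ldots$'') that the backward orbit of any $y_{0} \in \mathcal{X}$ is dense in $\mathcal{X}$, the same fact that was already invoked in the proof that $\Lambda$ is simple, and does not pause over $\mathcal{E}$ at all. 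Your workaround, approximating $y_{0}$ by some $\widetilde{y} \in \overline{\mathbb{C}} \setminus \mathcal{E}$ and transferring its backward iterates to backward iterates of $y_{0}$ via expansivity, is sound in spirit, but if you pursue it you must actually carry out the tolerance bookkeeping you flag as ``delicate'': the transfer step causes no loss precisely because expansivity contracts geometrically, so $d_{\overline{\mathbb{C}}}(y_{0}, \widetilde{y})$ shrinks by $\lambda^{-m}$ along an $m$-step pullback, and that residual must be absorbed into the $\delta$ from the density step before you pull back further along $\boldsymbol{\alpha}'$. If instead you are willing to accept the paper's unproved density premise at face value, the workaround is unnecessary and your argument coincides with the paper's.
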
 

\begin{proof} 
Recall that $\displaystyle{\bigcup\limits_{\nu\, \in\, \mathbb{Z}_{+}} \bigcup\limits_{\boldsymbol{\gamma}\, \in\, {\rm Cyl}_{\nu}} \left\{ \Pi_{\nu}^{-} \left( \mathfrak{X}_{\nu}^{-} \left( y_{0}; \boldsymbol{\gamma} \right) \right) \right\}}$ is a dense subset of $\mathcal{X}$ for any $y_{0}$ in $\mathcal{X}$, {\it i.e.}, given any $x_{0} \in \mathcal{X}$ and $\epsilon > 0$, there exists $\nu \in \mathbb{Z}_{+}$ such that $\left( y_{- \nu}, y_{0} \right) \in \Gamma^{\circ \nu}$ and $d_{\overline{\mathbb{C}}} \left( x_{0}, y_{- \nu} \right) < \epsilon$. Now consider the set of backward images of $\mathfrak{X}^{+} \left( y_{0}; \boldsymbol{\beta} \right) \in \mathscr{P}^{\mu, \Gamma} \left( \mathcal{X} \right)$ given by $\displaystyle{\bigcup\limits_{\nu\, \in\, \mathbb{Z}_{+}} \left\{ \mathfrak{X}^{+} \left( y_{- \nu}; \boldsymbol{\gamma \beta} \right) : \boldsymbol{\gamma} \in {\rm Cyl}_{\nu} \right\}}$. 

Given any point $\mathfrak{X}^{+} \left( x_{0}, \boldsymbol{\alpha} \right) \in \mathscr{P}^{\mu, \Gamma} \left( \mathcal{X} \right)$ and $\epsilon > 0$, we want to prove the existence of some $\nu \in \mathbb{Z}_{+}$ such that $d_{\mathscr{P}^{\mu, \Gamma} \left( \mathcal{X} \right)} \left( \mathfrak{X}^{+} \left( x_{0}, \boldsymbol{\alpha} \right), \mathfrak{X}^{+} \left( y_{- \nu}; \boldsymbol{\gamma \beta} \right) \right) < \epsilon$. 

For the given $\epsilon$, consider $n \in \mathbb{Z}_{+}$ such that $\dfrac{1}{2^{n}} < \epsilon$. We now look at $\Pi_{n}^{+} \left( \mathfrak{X}^{+} \left( x_{0}; \boldsymbol{\alpha} \right) \right) = x_{n} \in \mathcal{X}$ and mark the corresponding cylinder set $\boldsymbol{\alpha}' = \left( \alpha_{1} \cdots \alpha_{n} \right) \in {\rm Cyl}_{n}$. Then, there exists some $m \in \mathbb{Z}_{+}$ such that $d_{\overline{\mathbb{C}}} \left( x_{n}, y_{-m} \right) < \dfrac{\epsilon}{2^{n}}$, where $\left( y_{-m}, y_{0} \right) \in \Gamma^{\circ m}$ through some cylinder set $\boldsymbol{\gamma}  = \left( \gamma_{1} \cdots \gamma_{m} \right) \in {\rm Cyl}_{m}$. Since $\Gamma$ is expansive, we arrive at a point 
\[ \mathfrak{X}^{+} \left( y_{-(n + m)}; \boldsymbol{\alpha}' \boldsymbol{\gamma \beta} \right)\ =\ \left( y_{-(n + m)}, \cdots, y_{-m}, \cdots, y_{0}, \cdots; \alpha_{1} \cdots \alpha_{n} \gamma_{1} \cdots \gamma_{m} \beta_{1} \cdots \right)\ \in\ \mathscr{P}^{\mu, \Gamma} \left( \mathcal{X} \right), \] 
such that $d_{\mathscr{P}^{\mu, \Gamma} \left( \mathcal{X} \right)} \left( \mathfrak{X}^{+} \left( x_{0}, \boldsymbol{\alpha} \right), \mathfrak{X}^{+} \left( y_{-(n + m)}; \boldsymbol{\alpha}' \boldsymbol{\gamma \beta} \right) \right)\ \ <\ \ \epsilon$, thus proving the lemma. 
\end{proof} 

Finally, making use of the conclusion that we draw from Equation \eqref{usedensity} and Lemma \eqref{density}, we conclude that $G_{*}$ must be a constant function. Further, suppose there exists another sequence of positive integers $n_{k}$ such that $\mathcal{L}_{F^{*}}^{n_{k}} G$ converges to $G_{**}$, then $G_{**}$ is also a constant and satisfies $\sup \mathcal{L}_{F^{*}}^{n_{k}} G \to G_{**}$. However, $\left\{ \sup \mathcal{L}_{F^{*}}^{n_{k}} G \right\}_{k\, \ge\, 1}$ is a subsequence of $\left\{ \sup \mathcal{L}_{F^{*}}^{\nu} G \right\}_{\nu\, \ge\, 1}$, whose only limit point is $G_{*}$. Hence, $G_{*} \equiv G_{**}$. Thus, $\mathcal{L}_{F^{*}}^{\nu} G \to G_{*}$. 

\section{Proof of Claim \eqref{ucgce}} 
\label{proof} 

We have proved that the sequence $\left\{ \mathcal{L}_{F^{*}}^{\nu} G \right\}_{\nu\, \ge\, 1}$ converges to a constant, say $c(G)$, for every function $G \in \mathcal{C} \left( \mathscr{P}^{\mu, \Gamma} \left( \mathcal{X} \right), \mathbb{R} \right)$, where $F^{*} = F - \log \left( H \circ \sigma \right) + \log H - \log \Lambda$. 

Corresponding to the eigenfunction $H \in \mathcal{C} \left( \mathscr{P}^{\mu, \Gamma} \left( \mathcal{X} \right), \mathbb{R} \right)$, we now define a multiplication operator $M_{H}$ on $\mathcal{C} \left( \mathscr{P}^{\mu, \Gamma} \left( \mathcal{X} \right), \mathbb{R} \right)$ given by $M_{H} \left( G \right) = HG$ (pointwise multiplication). Then, from Equation \eqref{normalisedruelle}, we can write $\mathcal{L}_{F^{*}} = \dfrac{1}{\Lambda} M_{H^{-1}} \circ \mathcal{L}_{F} \circ M_{H}$. Thus, for any $\nu \in \mathbb{Z}_{+}$ and $G \in \mathcal{C} \left( \mathscr{P}^{\mu, \Gamma} \left( \mathcal{X} \right), \mathbb{R} \right)$, we have $\dfrac{1}{\Lambda^{\nu}} \mathcal{L}_{F}^{\nu} (G) = M_{H} \circ \mathcal{L}_{F^{*}}^{\nu} \left(\dfrac{G}{H}\right) \to c \left( \dfrac{G}{H} \right) H$. 

Considering $F = f \circ \Pi_{0}^{+},\ G = g \circ \Pi_{0}^{+}$ and $H = h \circ \Pi_{0}^{+}$, we now obtain that $\dfrac{1}{\Lambda^{\nu}} \mathcal{L}_{f}^{\nu} (g) \to c \left( \dfrac{g}{h} \right) h$. Thus, the proof of Claim \eqref{ucgce} and thereby, Theorem \eqref{rot} is complete. 

\begin{corollary} 
For any $f \in \mathcal{C}^{\alpha} \left( \mathcal{X}, \mathbb{R} \right)$, the eigenvalue $\Lambda$ of the Ruelle operator $\mathcal{L}_{f} : \mathcal{C}^{\alpha} \left( \mathcal{X}, \mathbb{R} \right) \longrightarrow \mathcal{C}^{\alpha} \left( \mathcal{X}, \mathbb{R} \right)$, is equal to $e^{{\rm Pr} (\Gamma\vert_{\mathcal{X} \times \mathcal{X}}, f)}$. 
\end{corollary}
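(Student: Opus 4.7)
The plan is to combine Theorem \ref{rot}(2) with the pressure characterisation via separated and spanning families (Theorem \ref{Pressure charac theorem}). Applying part (2) of Theorem \ref{rot} to $g \equiv 1$ yields $\dfrac{1}{\Lambda^{\nu}} \mathcal{L}_{f}^{\nu}(1)(x) \to c(1/h)\, h(x)$ uniformly on $\mathcal{X}$, with both factors strictly positive; hence
$$\log \Lambda\ \ =\ \ \lim_{\nu\, \to\, \infty}\; \frac{1}{\nu}\; \log \mathcal{L}_{f}^{\nu} (1)(x)\ \ \ \text{uniformly in}\ x \in \mathcal{X}.$$
I would then observe that every backward orbit $\mathfrak{X}_{\nu}^{-}(x; \boldsymbol{\beta}) \in \mathscr{Q}_{\nu}^{\Gamma}(x)$ lying in $\mathcal{X}$ throughout, read in reverse, is a forward orbit of length $\nu$ in $\mathscr{P}_{\nu}^{\Gamma|_{\mathcal{X} \times \mathcal{X}}}(\mathcal{X})$ terminating at $x$, with an identical ergodic sum. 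Therefore $\mathcal{L}_{f}^{\nu}(1)(x) = \sum_{\mathfrak{X} \in \mathcal{A}_{\nu}(x)} e^{\mathcal{Z}_{\nu}^{+}(f)(\mathfrak{X})}$, where $\mathcal{A}_{\nu}(x)$ denotes the collection of forward $\nu$-orbits in $\mathcal{X}$ ending at $x$.

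For the inequality $\log \Lambda \le {\rm Pr}(\Gamma|_{\mathcal{X} \times \mathcal{X}}, f)$, I would show the existence of $\epsilon_{0} > 0$ (independent of $\nu$) such that $\mathcal{A}_{\nu}(x)$ is a $(\nu, \epsilon_{0})$-separated family. Distinct orbits with different combinatorial data are automatically separated; those sharing the same data must differ at some intermediate coordinate, and iterating Definition \ref{expcorr} forward from the common endpoint $x_{0} = x$ (so that matched preimages contract by the factor $\lambda^{-1}$ at every step) forces the two orbits to coincide once they stay within a suitable uniform distance $\epsilon_{0}$. Consequently $\mathcal{L}_{f}^{\nu}(1)(x) \le \mathscr{R}_{\nu}^{\Gamma|_{\mathcal{X} \times \mathcal{X}}}(f, \epsilon_{0})$, and the desired inequality follows after taking $\frac{1}{\nu}\log$ and $\limsup_{\nu}$.

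For the reverse inequality, cover the compact set $\mathcal{X}$ by finitely many balls $\{B(x_{i}, \epsilon/2)\}_{i = 1}^{M(\epsilon)}$. Given any orbit $\mathfrak{X}_{\nu}^{+}(y_{-\nu}; \boldsymbol{\alpha}) \in \mathscr{P}_{\nu}^{\Gamma|_{\mathcal{X} \times \mathcal{X}}}(\mathcal{X})$ whose endpoint $y_{0}$ lies in $B(x_{i}, \epsilon/2)$, repeated application of Definition \ref{expcorr} produces a matched backward orbit from $x_{i}$ along the same combinatorial data $\boldsymbol{\alpha}$, whose $j$-th coordinate lies within $\epsilon/(2 \lambda^{\nu - j})$ of the corresponding coordinate of the given orbit; this matched orbit belongs to $\mathcal{A}_{\nu}(x_{i})$ and is $\epsilon$-close in the $(\nu, \epsilon)$ sense. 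Hence $\bigcup_{i} \mathcal{A}_{\nu}(x_{i})$ is a $(\nu, \epsilon)$-spanning family of $\mathscr{P}_{\nu}^{\Gamma|_{\mathcal{X} \times \mathcal{X}}}(\mathcal{X})$, giving
$$\mathscr{S}_{\nu}^{\Gamma|_{\mathcal{X} \times \mathcal{X}}}(f, \epsilon)\ \ \le\ \ \sum_{i\, =\, 1}^{M(\epsilon)} \mathcal{L}_{f}^{\nu}(1)(x_{i})\ \ \le\ \ M(\epsilon)\; \max_{i}\; \mathcal{L}_{f}^{\nu}(1)(x_{i}).$$
Passing to $\frac{1}{\nu}\log$, then $\limsup_{\nu}$, and finally $\epsilon \to 0$ yields ${\rm Pr}(\Gamma|_{\mathcal{X} \times \mathcal{X}}, f) \le \log \Lambda$.

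The main technical obstacle is the separation claim for $\mathcal{A}_{\nu}(x)$: Definition \ref{expcorr} asserts only the \emph{existence} of a matched preimage, not uniqueness among all preimages of a point through the same variety. Obtaining a separation constant $\epsilon_{0}$ uniform in $\nu$ requires choosing it smaller than the minimum distance between distinct local branches of inverse across $\mathcal{X}$; this uniform bound relies essentially on the compactness of $\mathcal{X}$ together with the fact, recalled in Section \ref{trop}, that $\mathcal{X}$ remains away from the normality set of $\Gamma$, so that only finitely many well-separated branches of inverse occur at any point of $\mathcal{X}$.
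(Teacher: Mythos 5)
Your proof is correct in substance and shares the same skeleton as the paper's: start from the convergence $\frac{1}{\nu}\log\mathcal{L}_{f}^{\nu}(1)\to\log\Lambda$ granted by Theorem~\ref{rot}, then sandwich $\log\Lambda$ between ${\rm Pr}(\Gamma|_{\mathcal{X}\times\mathcal{X}},f)$ via separated and spanning families of inverse orbits. The separated-side argument is essentially identical (both identify the full preimage tree at a point as a $(\nu,\epsilon_{0})$-separated family and compare $\mathcal{L}_{f}^{\nu}(1)$ with $\mathscr{R}_{\nu}^{\Gamma|_{\mathcal{X}\times\mathcal{X}}}(f,\epsilon_{0})$). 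The spanning-side argument is where you genuinely diverge: you cover the compact $\mathcal{X}$ by finitely many balls $B(x_{i},\epsilon/2)$ and show that the union $\bigcup_{i}\mathcal{A}_{\nu}(x_{i})$ spans, yielding $\mathscr{S}_{\nu}\le M(\epsilon)\max_{i}\mathcal{L}_{f}^{\nu}(1)(x_{i})$ with $M(\epsilon)$ independent of $\nu$; the paper instead fixes a single $x_{0}$ and invokes the density of the backward orbit of $x_{0}$ in $\mathcal{X}$ to find a depth $N_{x_{0}}$ at which the preimages $\epsilon$-cover $\mathcal{X}$, and then works with the resulting length-$\nu$ orbits based at those preimages. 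Your version is slightly cleaner because it avoids the implicit index shift $\nu\mapsto\nu+N_{x_{0}}$ that appears in Equation~\eqref{gepr}, where the left-hand quantity is literally $\mathcal{L}_{f}^{\nu}(1)(x_{-N_{x_{0}}})$ rather than $\mathcal{L}_{f}^{\nu}(1)(x_{0})$ and the claimed equality is only asymptotic. Finally, the technical caveat you raise about the uniformity of the separation constant $\epsilon_{0}$ across $\mathcal{X}$ --- that Definition~\ref{expcorr} only guarantees the existence of a matched preimage, not the branch separation itself, so one must rely on compactness of $\mathcal{X}$ together with finiteness and continuity of the inverse branches --- is a real point. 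The paper also relies on this (its $\epsilon_{x_{0}}$ controls separation only at $x_{0}$, while distinct orbits in $\mathcal{A}_{\nu}(x_{0})$ with the same combinatorial data first diverge at some interior point $x_{-j+1}$ where the relevant constant is $\epsilon_{x_{-j+1}}$, not $\epsilon_{x_{0}}$), and you are right to flag that the uniform infimum needs a compactness argument rather than expansivity alone to be airtight.
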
 

\begin{proof} 
As a particular case of the convergence that we have proved in Claim \eqref{ucgce} of Theorem \eqref{rot} , we have 
\begin{equation} 
\label{cgstopr} 
\frac{1}{\nu} \log \mathcal{L}_{f}^{\nu} \left(1\right)\ \ \to\ \ \log \Lambda,\ \ \ \ \text{as}\ \ \nu \to \infty. 
\end{equation} 

For any fixed $x_{0} \in \mathcal{X}$, there exists $\epsilon_{x_{0}} > 0$ such that whenever there exists distinct pre-images, say $x_{-1}$ and $x_{-1}'$ through the same variety, then they are at least $\epsilon_{x_{0}}$-apart from each other, {\it i.e.}, $d_{\overline{\mathbb{C}}} \left(x_{-1}, x_{-1}'\right) > \epsilon_{x_{0}}$ for all points $\left( x_{-1}, x_{0} \right) \in \Gamma_{i}$ and $\left( x_{-1}', x_{0} \right) \in \Gamma_{i}$ for $1 \le i \le N$. Extending these backward orbits of length of $1$ to obtain backward orbits of length $\nu$ that land at $x_{0}$ through the same combinatorial variety, we get a family of $(\nu, \epsilon_{x_{0}})$-separated orbits, namely $\left\{ \mathfrak{X}_{\nu}^{-} \left( x_{0}; \boldsymbol{\alpha} \right) : \boldsymbol{\alpha} \in {\rm Cyl}_{\nu} \right\}$ for all $\nu \in \mathbb{Z}_{+}$. 
\begin{equation} 
\label{lepr} 
\lim_{\nu\, \to\, \infty} \frac{1}{\nu} \log \left( \sum_{\boldsymbol{\alpha}\, \in\, {\rm Cyl}_{\nu}} \sum_{\mathfrak{X}_{\nu}^{-} \left( x_{0}; \boldsymbol{\alpha} \right)} e^{\left( \mathcal{Z}_{\nu}^{+} (f) \right) \left( \mathfrak{X}_{\nu}^{-} \left( x_{0}; \boldsymbol{\alpha} \right) \right)} \right)\ \ =\ \ \lim_{\nu\, \to\, \infty} \frac{1}{\nu} \log \left( \mathcal{L}_{f}^{\nu} (1) (x_{0}) \right)\ \ \le\ \ {\rm Pr} (\Gamma\vert_{\mathcal{X} \times \mathcal{X}}, f). 
\end{equation} 

Also, for every $x_{0} \in \mathcal{X}$ and $\epsilon > 0$, there exists $N_{x_{0}} \in \mathbb{Z}_{+}$ such that for any $y_{0} \in \mathcal{X}$ there exists a pre-image of $x_{0}$ given by $\left( x_{- N_{x_{0}}}, x_{0} \right) \in \Gamma^{\circ N_{x_{0}}}$ that satisfies $d_{\overline{\mathbb{C}}} \left(x_{- N_{x_{0}}}, y_{0}\right) < \epsilon$. Then, for any $\nu$-long orbit $\mathfrak{X}_{\nu}^{-} \left( y_{0}; \boldsymbol{\beta} \right) \in \mathscr{Q}_{\nu}^{\Gamma} (\mathcal{X})$, we can find another $\nu$-long orbit $\mathfrak{X}_{\nu}^{-} \left(x_{- N_{x_{0}}}; \boldsymbol{\beta} \right)$ so that the points are atmost $\epsilon$-apart. Thus, the collection of points $\left\{ \mathfrak{X}_{\nu}^{-} \left(x_{- N_{x_{0}}}; \boldsymbol{\beta} \right) : \boldsymbol{\beta} \in {\rm Cyl}_{\nu} \right\}$ forms a $(\nu, \epsilon)$-spanning set of $\mathscr{Q}_{\nu}^{\Gamma} (\mathcal{X})$. Hence, 
\begin{eqnarray} 
\label{gepr} 
\lim_{\nu\, \to\, \infty} \frac{1}{\nu} \log \left( \sum_{\boldsymbol{\beta}\, \in\, {\rm Cyl}_{\nu}} \sum_{\mathfrak{X}_{\nu}^{-} \left( x_{- N_{x_{0}}}; \boldsymbol{\beta} \right)} e^{\left( \mathcal{Z}_{\nu}^{+} (f) \right) \left( \mathfrak{X}_{\nu}^{-} \left( x_{- N_{x_{0}}}; \boldsymbol{\beta} \right) \right)} \right) & = & \lim_{\nu\, \to\, \infty} \frac{1}{\nu} \log \left( \mathcal{L}_{f}^{\nu} (1) \right) (x_{0}) \nonumber \\ 
& \ge & {\rm Pr} (\Gamma\vert_{\mathcal{X} \times \mathcal{X}}, f). 
\end{eqnarray} 

Thus, from Equations \eqref{cgstopr}, \eqref{lepr} and \eqref{gepr}, we obtain \[ \lim\limits_{\nu\, \to\, \infty} \dfrac{1}{\nu} \log \left( \mathcal{L}_{f}^{\nu} (1) \right) (x_{0})\ \ =\ \ {\rm Pr} (\Gamma\vert_{\mathcal{X} \times \mathcal{X}}, f)\ \ =\ \ \log \Lambda,\ \ \ \text{for any point}\ x_{0} \in \mathcal{X}. \]  
\end{proof} 

\begin{corollary} 
$\Lambda$ is the maximal eigenvalue of $\mathcal{L}_{f}$, with the remainder of the spectrum of $\mathcal{L}_{f}$ lying in a disc of radius strictly smaller than $\Lambda$, whenever $f$ is a H\"{o}lder continuous function and the domain of $\mathcal{L}_{f}$ is restricted to the same space. 
\end{corollary}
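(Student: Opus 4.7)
The plan is to upgrade the uniform convergence $\mathcal{L}_{F^{*}}^{\nu}(G) \to c(G)$ established above into an exponential rate on the H\"older subspace, and then invoke the standard Ionescu-Tulcea-Marinescu quasi-compactness machinery. Throughout, I would work with the normalised operator $\mathcal{L}_{F^{*}}$ on $\mathscr{P}^{\mu, \Gamma}(\mathcal{X})$, since the conjugation $\mathcal{L}_{F^{*}} = \Lambda^{-1} M_{H^{-1}} \circ \mathcal{L}_{F} \circ M_{H}$ preserves H\"older regularity (using that $h$ is bounded away from $0$ on compact $\mathcal{X}$ and H\"older as the unique positive eigenfunction of $\mathcal{L}_f$; the H\"older regularity of $h$ can be read off from Claim \eqref{claimtwo} together with the rate of decay of $\mathcal{S}(x,y)$). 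Consequently the spectrum of $\mathcal{L}_f$ on $\mathcal{C}^{\alpha}(\mathcal{X},\mathbb{R})$ is just $\Lambda$ times the spectrum of $\mathcal{L}_{F^{*}}$ on the corresponding H\"older space over $\mathscr{P}^{\mu, \Gamma}(\mathcal{X})$.

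The central step is a Lasota-Yorke / Doeblin-Fortet inequality. Revisiting the equicontinuity estimate proved in the previous section, for H\"older $G$ and any pair of points in $\mathscr{P}^{\mu,\Gamma}(\mathcal{X})$ at distance at most $\lambda^{-(k-1)}$, the bound
\[ \bigl| \mathcal{L}_{F^{*}}^{\nu}G(\cdot) - \mathcal{L}_{F^{*}}^{\nu}G(\cdot) \bigr| \leq \|G\|_{\infty} \bigl| e^{\mathcal{S}(\cdot,\cdot)} - 1 \bigr| + \omega_{k+\nu}(G) \]
combines with the geometric decay $|e^{\mathcal{S}} - 1| \leq C\lambda^{-k}$ (because $\mathcal{S}$ is dominated by the tail $\sum_{j \geq k+1}\omega_j(f)$, which is geometric in $k$ when $f$ is H\"older) and the H\"older estimate $\omega_{k+\nu}(G) \leq \lambda^{-\nu} \omega_k(G)$, yielding
\[ \omega_k\bigl(\mathcal{L}_{F^{*}}^{\nu}G\bigr) \leq C\lambda^{-k}\|G\|_{\infty} + \lambda^{-\nu}\omega_k(G). \]
Summing over $k$ and combining with the contraction $\|\mathcal{L}_{F^{*}}^{\nu}G\|_{\infty} \leq \|G\|_{\infty}$ gives $\|\mathcal{L}_{F^{*}}^{\nu}G\|_{\alpha} \leq \lambda^{-\nu}\|G\|_{\alpha} + C'\|G\|_{\infty}$.

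With this inequality in hand, together with the compact embedding $\mathcal{C}^{\alpha} \hookrightarrow \mathcal{C}$ supplied by Arzel\`a-Ascoli, Ionescu-Tulcea-Marinescu gives quasi-compactness of $\mathcal{L}_{F^{*}}$ on the H\"older space, with essential spectral radius at most $\lambda^{-1} < 1$. Hence the spectrum outside the disc of radius $\lambda^{-1}$ consists of finitely many eigenvalues of finite multiplicity. No peripheral eigenvalue other than $1$ can survive: if $\mathcal{L}_{F^{*}}\phi = e^{i\theta}\phi$ with $\phi \in \mathcal{C}^{\alpha}\setminus\{0\}$, then $\mathcal{L}_{F^{*}}^{\nu}\phi = e^{i\nu\theta}\phi$ contradicts the convergence in Claim \eqref{ucgce} unless $\theta = 0$; and $1$ is simple because $\Lambda$ was proved simple for $\mathcal{L}_f$. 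Conjugating back, $\Lambda$ is the maximal (simple) eigenvalue of $\mathcal{L}_f$ on $\mathcal{C}^{\alpha}(\mathcal{X},\mathbb{R})$ and the remainder of the spectrum lies in a disc of radius at most $\Lambda\lambda^{-1} < \Lambda$.

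The main obstacle is the Lasota-Yorke inequality itself: verifying that $\mathcal{S}(\cdot,\cdot)$ decays geometrically once $f$ is genuinely H\"older (so $\omega_j(f) \leq [f]_{\alpha}\lambda^{-(j-1)}$) and that the oscillation estimates transfer cleanly between $\mathcal{X}$ and $\mathscr{P}^{\mu,\Gamma}(\mathcal{X})$. Since $\Pi_0^{+}$ does not expand distances, this transfer is routine, but one must match the H\"older exponent $\alpha = \lambda^{-1}$ on $\mathcal{X}$ with the natural modulus of continuity on the infinite-orbit space dictated by the metric $d_{\mathscr{P}^{\Gamma}(\overline{\mathbb{C}})}$.
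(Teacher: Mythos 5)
Your argument is correct, and it takes a genuinely different route from the paper, even though both proceed by conjugating to the normalised operator $\mathcal{L}_{F^{*}}$ and conjugating back at the end. The paper's proof introduces the $\sigma$-invariant fixed measure $M$ of $(\mathcal{L}_{F^{*}})^{*}$, decomposes $\mathcal{H}^{s}\left( \mathscr{P}^{\mu, \Gamma}(\mathcal{X}), \mathbb{R} \right) = W_{1} \oplus W_{2}$ into functions integrating to zero against $M$ plus the constants, and then delegates the spectral gap on $W_{1}$ to Theorem~4.5 of Parry--Pollicott. You instead make the contraction mechanism explicit: you extract a Lasota--Yorke (Doeblin--Fortet) inequality directly from the equicontinuity estimate proven earlier, invoke Ionescu-Tulcea--Marinescu/Hennion quasi-compactness together with the compact inclusion $\mathcal{C}^{\alpha} \hookrightarrow \mathcal{C}$, and then rule out unimodular eigenvalues other than $1$ using the already-proved convergence of $\mathcal{L}_{F^{*}}^{\nu}G$. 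Your route is more self-contained and gives an explicit quantitative bound on the essential spectral radius, at the price of having to carry out the Lasota--Yorke computation; the paper's route is shorter but leans on a cited theorem whose hypotheses (subshift setting, local product structure) one must still verify carry over to $\mathscr{P}^{\mu, \Gamma}(\mathcal{X})$.

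One technical point in your sketch deserves tightening. The step asserting $\omega_{k+\nu}(G) \leq \lambda^{-\nu}\,\omega_{k}(G)$ is not a pointwise inequality about the sequence $\{\omega_{k}(G)\}_{k}$; for a general $G$ one only has the monotonicity $\omega_{k+\nu}(G) \leq \omega_{k}(G)$. What you actually want is to track the H\"older seminorm: if $G$ has exponent $s$ with seminorm $[G]_{s}$, then $\omega_{k+\nu}(G) \leq [G]_{s}\,\lambda^{-s(k+\nu-1)} = \lambda^{-s\nu}\cdot\bigl([G]_{s}\,\lambda^{-s(k-1)}\bigr)$, and it is this factor $\lambda^{-s\nu}$ that should appear on the right-hand side of the Lasota--Yorke inequality (yielding essential spectral radius at most $\lambda^{-s}$, still strictly less than $1$). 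This is a bookkeeping issue rather than a genuine gap, and the remainder of your argument --- the domination of $\mathcal{S}$ by the geometric tail $\sum_{j \geq k+1}\omega_{j}(f)$, the summation over $k$, and the elimination of peripheral spectrum --- goes through as written. You should also make explicit, as the paper does, that $h$ and hence $F^{*}$ inherit H\"older regularity, which you correctly observe can be read off from the cone invariance used in the existence proof for $h$ together with the decay of $\mathcal{S}$.
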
 

\begin{proof} 
Let $f \in \mathcal{H}^{s} \left( \mathcal{X}, \mathbb{R} \right) \subset \mathcal{C}^{\alpha} \left( \mathcal{X}, \mathbb{R} \right)$ where $\mathcal{H}^{s} \left( \mathcal{X}, \mathbb{R} \right)$ denotes the Banach space of H\"{o}lder continuous functions with H\"{o}lder exponent $s$. Corresponding to $f$, we know that we have $F = f \circ \Pi_{0}^{+} \in \mathcal{H}^{s} \left( \mathscr{P}^{\mu, \Gamma} \left( \mathcal{X} \right), \mathbb{R} \right)$. Then, we know that the eigenfunction $h$ of $\mathcal{L}_{f}$ and the corresponding eigenfunction $H = h \circ \Pi_{0}^{+}$ of $\mathcal{L}_{F}$ also belong to $\mathcal{H}^{s} \left( \mathcal{X}, \mathbb{R} \right)$ and $\mathcal{H}^{s} \left( \mathscr{P}^{\mu, \Gamma} \left( \mathcal{X} \right), \mathbb{R} \right)$ respectively. Thus, $F^{*}$, as defined in Equation \eqref{normalF} belongs to $\mathcal{H}^{s} \left( \mathscr{P}^{\mu, \Gamma} \left( \mathcal{X} \right), \mathbb{R} \right)$. 

Denoting by $\mathcal{M} \left( \mathscr{P}^{\mu, \Gamma} \left( \mathcal{X} \right), \sigma \right)$, the weak-* compact space of all $\sigma$-invariant probability measures supported on $\mathscr{P}^{\mu, \Gamma} \left( \mathcal{X} \right)$, one can prove that the adjoint operator $\left( \mathcal{L}_{F^{*}} \right)^{*}$ restricted on $\mathcal{M} \left( \mathscr{P}^{\mu, \Gamma} \left( \mathcal{X} \right), \sigma \right)$ has a fixed point, say $M$ using the Schauder-Tychonoff theorem. Further, since we know from the proof of Claim \eqref{ucgce} that the sequence $\mathcal{L}_{F^{*}}^{\nu} \left(G\right) \to c \left(G\right)$, we conclude that 
\[ \mathcal{L}_{F^{*}}^{\nu} \left(G\right) \to \int G \mathrm{d} M. \] 

Now writing $\mathcal{H}^{s} \left( \mathscr{P}^{\mu, \Gamma} \left( \mathcal{X} \right), \mathbb{R} \right) = W_{1} \oplus W_{2}$ where 
\begin{eqnarray*} 
W_{1} & = & \left\{ G \in \mathcal{H}^{s} \left( \mathscr{P}^{\mu, \Gamma} \left( \mathcal{X} \right), \mathbb{R} \right)\ :\ \int G \mathrm{d} M = 0 \right\}\ \ \text{and} \\ 
W_{2} & = & \text{the set of all constant functions}, 
\end{eqnarray*} 
and following through the arguments in Theorem (4.5) in \cite{pp:1990}, we obtain $1$ as the maximal eigenvalue of $\mathcal{L}_{F^{*}}$ with the remainder of the spectrum lying in a disc of radius strictly smaller than $1$. Thus, $\Lambda$ is the maximal eigenvalue of $\mathcal{L}_{F}$ with the remainder of the spectrum of $\mathcal{L}_{F}$ lying in a disc of radius strictly smaller than $\Lambda$. The case for $\mathcal{L}_{f}$ follows suit. 
\end{proof}

\vspace{+2cm} 

{\bf Authors' Affiliations and Contact coordinates}: 
\bigskip 

{\sc Shrihari Sridharan} \\ 
Indian Institute of Science Education and Research Thiruvananthapuram (IISER-TVM), \\ 
Maruthamala P.O., Vithura, Kerala, India. \\ 
\texttt{shrihari@iisertvm.ac.in} \\ 
\bigskip 

{\sc Subith G.} \\ 
Indian Institute of Science Education and Research Thiruvananthapuram (IISER-TVM), \\ 
Maruthamala P.O., Vithura, Kerala, India. \\ 
\texttt{subith21@iisertvm.ac.in} 

\end{document}